\DeclareMathOperator{\Id}{\mathrm{Id}}
\newcommand{\Was}[1]{\mathbb{W}_{#1}}
\newtheorem{corollary}[]{Corollary}
\newcommand{\Div}{\mathrm{div}_{g}}
\DeclareMathOperator*{\argmin}{argmin}
\newcommand{\cP}{\mathcal{P}}
\newcommand{\cC}{\mathcal{C}}
\newcommand{\cV}{\mathcal{V}}
\newcommand{\cU}{\mathcal{U}}
\newcommand{\cL}{\mathcal{L}}
\newcommand{\cM}{\mathcal{M}}
\newcommand{\Ent}{\mathrm{Ent}}
\newcommand{\Leb}{\mathrm{Leb}}
\newcommand{\Tr}{\text{Tr}}
\newcommand{\Schro}{\text{Schr\"{o}dinger}}
\newcommand{\Ito}{\text{It\^{o}}}
\newcommand{\Exp}[1]{\mathrm{E}_{#1}}
\newcommand{\vol}{\mathrm{vol}}
\newcommand{\diffref}{\mathbf{m}}
\newcommand{\ricci}{\mathrm{Ric}}
\newcommand{\eps}{\varepsilon}
\newtheorem{remark}{Remark}
\newtheorem*{definition}{Definition}
\newtheorem{theorem}{Theorem}
\newtheorem{informaltheorem}{Informal Theorem}
\newtheorem{assumption}{Assumption}
\newtheorem{proposition}{Proposition}
\newtheorem*{conjecture}{Conjecture}
\newcommand{\commentout}[1]{}
\newcommand{\rr}{\mathbb{R}}
\title[Diffusion Approximations to Schr\"{o}dinger Bridges]{Diffusion Approximations to Schr\"{o}dinger Bridges on Manifolds}
\author{Garrett Mulcahy}
\address{Garrett Mulcahy\\ Department of Mathematics \\ University of Washington\\ Seattle WA 98195, USA\\ {Email: gmulcahy@uw.edu}}
\author{Soumik Pal}
\address{Soumik Pal\\ Department of Mathematics \\ University of Washington\\ Seattle WA 98195, USA\\ {Email: soumik@uw.edu}}
\keywords{Schr\"odinger bridges, optimal transport on manifolds, heat kernel expansion, Mirror Langevin diffusions}
\subjclass[2000]{49N99, 49Q22, 60J60}
\thanks{This research is partially supported by the following grants. Both authors are supported by NSF grants DMS-2134012 and DMS-2133244 and PIMS PRN 01 (Kantorovich Initiative). Mulcahy is also supported by the NSF Graduate Research Fellowship Program under Grant No.\ DGE-2140004.}
\date{\today}
\begin{document}

\begin{abstract}
We present a collection of explicit diffusion approximations to small temperature $\Schro$ bridges on manifolds. Our most precise results are when both marginals are the same and the $\Schro$ bridge is on a manifold with a reference process given by a reversible diffusion. In the special case that the reference process is the manifold Brownian motion, we use the small time heat kernel asymptotics to show that the gradient of the corresponding $\Schro$ potential converges in $L^2$, as the temperature vanishes, to a manifold analogue of the score function of the marginal. As an application of the previous result we show that the Euclidean $\Schro$ bridge, computed for the quadratic cost, between two different marginal distributions can be approximated by a transformation of a two point distribution of a stationary Mirror Langevin diffusion. 
\end{abstract}

\maketitle

\section{Introduction}\label{sec:introduction}
Let $(M,g)$ denote a smooth (in the sense of $C^{\infty}$) connected, complete Riemannian manifold without boundary, equipped with the Levi-Civita connection denoted $\nabla_{g}$. The manifold need not be compact. Fix some $V \in C^2(M)$ (further assumptions will be detailed later) and consider the SDE
\begin{align}\label{eq:intro-ref-proc}
    dX_t = -\frac{1}{2}\nabla_g V(X_t) dt + dB_t^{M}, \quad t \geq 0,
\end{align}
where $(B_t^{M}, t \geq 0)$ is Brownian motion on $M$, and $\vol$ refers to the volume measure on the manifold defined in \eqref{eq:vol-measure-coords}.  
We impose the initial distribution $X_0 \sim e^{-V}\vol$, which is the reversible measure of the diffusion and may not be a probability measure. We write $e^{-V}\vol$ to denote the measure whose Radon-Nikodym derivative with respect to $\vol$ is equal to $e^{-V}$. We use $\cM_{+}(X)$ and $\cP(X)$ to denote the set of Borel positive and probability measures on a topological space $X$, respectively. 
Let $R_{0\eps} \in \cM_{+}(M \times M)$ denote the law of $(X_0,X_\eps)$. 

Now, fix $\mu \in \cP_2(M)$ satisfying assumptions we detail later. The same marginal $\eps$-(static) $\Schro$ bridge \cite{schroLeonard13} from $\mu$ to itself is defined as
\begin{align}\label{eq:intro-sb-defn}
    \pi^{\eps} &:= \argmin\limits_{\pi \in \Pi(\mu,\mu)} H(\pi|R_{0\eps}),
\end{align}
where $H(\cdot|\cdot)$ is relative entropy (Kullback-Leibler divergence) and $\Pi(\mu,\mu) \subset \cP(M \times M)$ is the set of couplings of $\mu$ with itself. The relative entropy of $\mu_1 \in \cP(M)$ with respect to $\mu_2 \in \cM_{+}(M)$ is defined as
\begin{align}\label{eq:intro-rel-ent-defn}
    H(\mu_1|\mu_2) :=
        \Exp{\mu_1}\left[\log \left(\frac{d\mu_1}{d\mu_2}\right)\right], \quad \text{if $\mu_1 << \mu_2$}
\end{align}
and equal to $+\infty$ otherwise. Another information theoretic quantity that we consider is the \textbf{(relative) Fisher information}, defined as
\begin{align}\label{eq:intro-fi-defn}
    I_{g}(\mu_2|\mu_1) := 
        \Exp{\mu_1}\left[\norm{ \nabla_g \log\left(\frac{d\mu_1}{d\mu_2}\right)}^2_g\right], \quad \text{if $\mu_1 << \mu_2$}
\end{align}
and equal to $+\infty$ otherwise.

In this paper, we are interested in understanding the behavior of $\pi^{\eps}$ when $\eps$ (which we refer to as temperature) is small as well as the influence of the geometry of $M$ on this behavior. 

The key heuristic is the following: at small temperature, the same marginal $\Schro$ bridge has many attributes in common with a diffusion. To be precise, suppose that $\mu = \exp(-U)\vol$ for some $U \in C^2(M)$ (further assumptions will be detailed later), and consider the SDE
\begin{align}\label{eq:intro-target-process}
    dY_t = -\frac{1}{2}\nabla_g U(Y_t) dt + dB_t^{M}, \quad Y_0 \sim e^{-U}\vol.
\end{align}
The content of Theorem \ref{thm:sym-rel-ent} is that the joint distribution of $(Y_0,Y_{\eps})$ is a tight approximation in symmetric relative entropy to $\pi^{\eps}$. 
\begin{informaltheorem}[Informal Statement of Theorem \ref{thm:sym-rel-ent} below]
    Let $\ell_{\eps} = \mathrm{Law}(Y_0,Y_{\eps})$, where $(Y_t, t \geq 0)$ is as defined in \eqref{eq:intro-target-process}. Under assumptions, it holds that
    \begin{align}\label{eq:sym-rel-informal}
        H(\ell_{\eps}|\pi^{\eps})+H(\pi^{\eps}|\ell_{\eps}) = o(\eps^2).
    \end{align}
\end{informaltheorem}

From the $V$ and $U$ defined in \eqref{eq:intro-ref-proc} and \eqref{eq:intro-target-process}, respectively, define the \textbf{reciprocal characteristics} of these processes by
\begin{align}
    \cV = \frac{1}{8}\norm{\nabla_g V}_g^2 - \frac{1}{4}\Delta_g V \text{ and } \cU = \frac{1}{8}\norm{\nabla_g U}_g^2 - \frac{1}{4}\Delta_g U,
\end{align}
where $\Delta_{g}$ is the Laplace-Beltrami operator on $(M,g)$, defined later in \eqref{eq:laplace-beltrami-coords}.  
This quantity has been identified as a crucial object in the study of Langevin diffusions \cite{LKreener,conforti-recip-characteristics18,vonrenesse-conf18} and has an interpretation as acceleration in the Wasserstein space \cite{conforti-second-order-sb19}. Recently, \cite{chaintron2025propagation} outlined the role that the convexity of $\cU$ plays in propagating a generalized notion of convexity along solutions to Hamilton-Jacobi-Bellman equations. In our work, the difference $(\cU-\cV)$ appears in the upper bound of \eqref{eq:sym-rel-informal}. Note that while the choice of $V$ in \eqref{eq:intro-ref-proc} does not affect the choice of diffusion approximation \eqref{eq:intro-target-process}, it does affect the upper bound in \eqref{eq:sym-rel-informal} 

In Theorem \ref{thm:generator-transfer}, this diffusion approximation is leveraged to show that the family of conditional distributions $(\pi^{\eps}(\cdot|x), \eps > 0, x \in M)$ admits an approximate generator that agrees with that of the diffusion in \eqref{eq:intro-target-process}. We emphasize that there is no apriori Markov structure to this family of integral operators because the underlying coupling changes with each $\eps$. Nonetheless, these operators stitch together in a way that is approximately Markov as $\eps$ vanishes. 
\begin{informaltheorem}[Informal Statement of Theorem \ref{thm:generator-transfer}]
    Let $\cL$ denote the generator of the diffusion in \eqref{eq:intro-target-process}. Under assumptions, for $\xi: M \to \mathbb{R}$ Lipshitz and in the domain of $\cL$, it holds in $L^2(\mu)$
    \begin{align*}
        \lim\limits_{\eps \downarrow 0}\frac{1}{\eps}\left(\Exp{\pi^{\eps}}[\xi(Z)|X=x]-\xi(x)\right) = \cL\xi(x).
    \end{align*}
\end{informaltheorem}
The proof shows an interaction between probability and geometry of the manifold. We capture this interaction by assuming a curvature dimension condition, defined below in \ref{eq:cd-defn}. 

The Radon-Nikodym derivative of the $\Schro$ bridge with respect to its reference measure is known to admit a product decomposition \cite[Section 3]{schroLeonard13}. By the symmetry of the marginal measures, for $\pi^{\eps}$ defined in \eqref{eq:intro-sb-defn} there exists a unique $a^{\eps}: M \to [0,+\infty)$ such that 
\begin{align}\label{eq:intro-potent-defn}
    \frac{d\pi^{\eps}}{dR_{0\eps}}(x,z) &= a^{\eps}(x)a^{\eps}(z).
\end{align}
The quantity $\eps \log a^{\eps}$ is called in the literature a $\Schro$ potential (also entropic potential), and its limit and well as that of its gradient as $\eps \downarrow 0$ are of great interest for their connection to the unregularized optimal transport problem \cite{pooladian2022entropic,nutz-weisel-22,chiarini2022gradient}. 
It is known that $\eps \nabla_g \log a^{\eps} \to 0$, the gradient of the Kantorovich potential, in $L^2(\mu)$ as $\eps \downarrow 0$ \cite[Theorem 1.1]{chiarini2022gradient}. In \cite[Theorem 2]{AHMP25}, it is shown that for a large class of $\rho \in \cP_2(\mathbb{R}^{d})$ the next order limit is $\nabla \log a^{\eps} \to \frac{1}{2}\nabla \log \rho$ in $L^2(\rho)$ as $\eps \downarrow 0$. The quantity $\nabla \log \rho$ is called the \textbf{score function}, and it is of interest in machine learning and statistics applications. In Theorem \ref{thm:score-function}, we extend this result to the manifold setting. 
\begin{informaltheorem}[Informal Statement of Theorem \ref{thm:score-function}]
    Let $\pi^{\eps}$ be computed be as in \eqref{eq:intro-sb-defn}, but now we insist that the reference measure is the manifold Wiener measure. Let $a^{\eps}$ be as in \eqref{eq:intro-potent-defn}. Under assumptions, it holds in $L^2(\mu)$ that
    \begin{align}\label{eq:informal-schro-limit}
        \lim\limits_{\eps \downarrow 0} \nabla_g \log a^{\eps} = \frac{1}{2}\nabla_g \log \left(\frac{d\mu}{d\vol}\right).
    \end{align}
    In the case of Hessian manifold (i.e.\ a global chart with $g = \nabla^2 \varphi$), 
\begin{align}\label{eq:hessian-manifold-bp}
    \lim\limits_{\eps \downarrow 0} \frac{1}{\eps}\left(\Exp{\pi_{\eps}}[Z|X=x] - x\right) \to \frac{1}{2}g^{ij}(x)\frac{\partial}{\partial x^{j}}\left(\log \frac{d\mu}{d\vol} - \frac{1}{2}\log g \right)(x). 
\end{align}
\end{informaltheorem}
Observe that the right hand side of \eqref{eq:informal-schro-limit} is a manifold analogue of the score function. The conditional expectation on the left hand side of \eqref{eq:hessian-manifold-bp} is a manifold analogue of the barycentric projection introduced in \cite{pooladian2022entropic} and can only be defined when the manifold possesses a global chart.

To conclude this paper, we interest ourselves with extending this program of diffusion approximation to the quadratic cost $\Schro$ bridge on $\mathbb{R}^{d}$ with different marginals. To see how this is connected to the rest of the paper, let $e^{-f}$ and $e^{-h}$ be two probability densities on $\rr^d$ and consider the optimal transport problem with quadratic cost between the two, i.e., 
\[
\gamma_{\mathrm{opt}}:=\argmin_{\gamma \in \Pi(e^{-f}, e^{-h})} \frac{1}{2}\int_{\mathbb{R}^{d} \times \mathbb{R}^{d}} \norm{y-x}^2 d\gamma.
\]
By Brenier's theorem \cite{THEbrenier}, there is a unique solution given by a map $\nabla \varphi: \rr^d \rightarrow \rr^d$ that pushforwards $e^{-f}$ to $e^{-h}$. There is one-to-one correspondence between the sets of the couplings $\Pi(e^{-f}, e^{-h})$ and $\Pi(e^{-f}, e^{-f})$, by the identification $\gamma \in \Pi(e^{-f}, e^{-h}) \mapsto (\Id, \nabla \varphi^*)_{\#} \gamma\in \Pi(e^{-f}, e^{-f})$, where $\varphi^*$ is the convex conjugate of $\varphi$. The inverse map is given by the pushforward by the map $(\Id, \nabla \varphi)$. Using this identification, one may express $\gamma_{\mathrm{opt}}$ as the pushforward, via the map $(\Id, \nabla \varphi)$, of the following optimal coupling    
\begin{equation}\label{eq:bregman}
\gamma^*_{\mathrm{opt}}:= \argmin_{\eta \in \Pi(e^{-f}, e^{-f})} \int_{\mathbb{R}^{d} \times \mathbb{R}^{d}} \norm{y- \nabla \varphi(x)}^2 d\eta=\argmin_{\eta \in \Pi(e^{-f}, e^{-f})} \int_{\mathbb{R}^{d} \times \mathbb{R}^{d}} D_{\varphi}[y \mid x] d\eta.
\end{equation}
Here, $D_{\varphi}[y\mid x]$ is the Bregman divergence of the convex function $\varphi$ defined as $D_{\varphi}[y \mid x]= \varphi(y) - \varphi(x) - \nabla \varphi(x)\cdot (y-x)$. Since the only term involving both $x$ and $y$ is $-y\cdot \nabla \varphi(x)$, the second equality in \eqref{eq:bregman} follows. 

Now, $D_{\varphi}[y\mid x]$ is not a Riemannian metric, but, when $y \approx x$, it is close to an actual Riemannian metric which is that of the Hessian manifold (see Section \ref{subsec:geom}) induced by the convex potential $\varphi$. Thus, one may expect that if we consider the Schr\"{o}dinger bridge with this Riemannian metric and a small temperature $\eps$, and then pushforward this bridge by the map $(\Id, \nabla \varphi)$, one will obtain an approximation of the Schr\"odinger bridge corresponding to the Euclidean transport problem between the marginals $e^{-f}$ and $e^{-h}$. 

The same marginal problem on the Hessian manifold is approximated by the two point distribution of the so-called Mirror Langevin diffusion. Then, we apply the  transformation $(\Id, \nabla \varphi)$ to the two marginals of the Mirror Langevin diffusion to approximate the $\Schro$ bridge on $\mathbb{R}^{d}$ with different marginals. 

\begin{informaltheorem}[Informal Statement of Theorem \ref{thm:mld-quad-sb-comparison}]
    Let $e^{-f}, e^{-h} \in \cP_2(\mathbb{R}^{d})$ with $\nabla \varphi: \mathbb{R}^{d} \to \mathbb{R}^{d}$ the Brenier map from $e^{-f}$ to $e^{-h}$. Let $(X_t, t \geq 0)$ denote the primal Mirror Langevin diffusion released from stationarity, defined in \eqref{eq:MLD}. Set $\bar{\ell}_{\eps} = \mathrm{Law}(X_0,\nabla \varphi(X_{\eps})) \in \Pi(e^{-f},e^{-h})$, and let $\Pi_{\eps}$ denote the $\eps$-static Euclidean $\Schro$ bridge from $e^{-f}$ to $e^{-h}$. Then under assumptions it holds that
    \begin{align*}
        \lim\limits_{\eps \downarrow 0} H(\bar{\ell}_{\eps}|\Pi_{\eps}) = 0.
    \end{align*}
\end{informaltheorem}
Our approach to proving Theorem \ref{thm:mld-quad-sb-comparison} rests on a comparison of the heat kernel of the Hessian manifold \cite{shima2007geometry} to the Euclidean heat kernel. In this comparison, we use small time manifold heat kernel asymptotics. A technical impediment of our proof is that we cannot establish a quantitative rate of convergence in $\eps$. That being said, given a more detailed heat kernel asymptotic (perhaps one proven specifically for the Hessian manifold), it may be possible to quantify this convergence. We make the following conjecture. For a rationale, see Remark \ref{remark:justification-conjecture} in Section \ref{sec:diff-marg}.
\begin{conjecture}\label{conjecture:O-eps}
    Under assumptions, perhaps slightly stronger than those of Theorem \ref{thm:mld-quad-sb-comparison},
    \begin{align}\label{eq:conj-o-eps}
        H(\bar{\ell}_{\eps}|\Pi_{\eps}) = O(\eps).
    \end{align}
    And in general, this quantity cannot be $o(\eps)$. Using the tools in this paper, as a corollary
    \begin{align}\label{eq:bp-conv}
        \int_{\mathbb{R}^{d}} \norm{\Exp{\Pi^{\eps}}[Y|X=x]-\nabla \varphi(x)}^2 e^{-f(x)}dx = O(\eps^2).  
    \end{align}
\end{conjecture}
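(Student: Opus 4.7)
The plan is to reduce the different-marginals problem on $\mathbb{R}^{d}$ to a same-marginals problem on the Hessian manifold $(M,g)$ with $g=\nabla^{2}\varphi$, so that Theorem \ref{thm:sym-rel-ent} becomes directly applicable. Since $\nabla\varphi^{*}$ is the Brenier inverse of $\nabla\varphi$, the map $(\Id,\nabla\varphi^{*})$ is a measurable bijection that sends $\bar\ell_{\eps}$ to $\ell^{H}_{\eps}:=\mathrm{Law}(X_{0},X_{\eps})$ on $M\times M$ and $\Pi_{\eps}$ to some coupling $\tilde\Pi_{\eps}\in\Pi(e^{-f},e^{-f})$ (the second marginal becomes $e^{-f}$ because $\nabla\varphi^{*}$ is the Brenier map from $e^{-h}$ to $e^{-f}$). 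Relative entropy is invariant under such bijections, so $H(\bar\ell_{\eps}|\Pi_{\eps})=H(\ell^{H}_{\eps}|\tilde\Pi_{\eps})$.

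Next, insert the Hessian Schr\"odinger bridge $\pi^{H}_{\eps}$ (the minimizer in \eqref{eq:intro-sb-defn} on $M$ with both marginals $e^{-f}$ and reference process the primal MLD) via the identity $H(p|q)=H(p|r)+\mathbb{E}_{p}[\log(dr/dq)]$ to obtain
\begin{align*}
H(\ell^{H}_{\eps}|\tilde\Pi_{\eps}) = H(\ell^{H}_{\eps}|\pi^{H}_{\eps}) + \mathbb{E}_{\ell^{H}_{\eps}}\!\left[\log\frac{d\pi^{H}_{\eps}}{d\tilde\Pi_{\eps}}\right].
\end{align*}
By Theorem \ref{thm:sym-rel-ent} the first term is $o(\eps^{2})$. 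For the second, the Schr\"odinger factorization \eqref{eq:intro-potent-defn} applied to each bridge, combined with the fact that both have the same $e^{-f}$ marginals, reduces the log-ratio to the log-ratio of the two reference kernels, namely the primal-MLD transition density on $(M,g)$ against the pulled-back Euclidean Gaussian kernel; contributions from the Schr\"odinger potentials $a^{\eps}$ are controlled by the manifold score-function limit of Theorem \ref{thm:score-function}.

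The main technical step, and the main obstacle, is to expand this kernel ratio via Minakshisundaram--Pleijel short-time asymptotics on the Hessian manifold. There $d_{g}(x,y)^{2}$ matches the symmetrized Bregman cost $\norm{\nabla\varphi(y)-\nabla\varphi(x)}^{2}$ to leading order, and the volume and drift corrections give a factor proportional to $\det\nabla^{2}\varphi$; both terms cancel against matching contributions in $\tilde\Pi_{\eps}$. What remains is the first non-trivial heat-kernel coefficient $a_{1}(x,y)$, a multiple of the scalar curvature of $g$ (equivalently, a reciprocal characteristic of the primal MLD), which contributes at order $O(\eps)$ after integration against a kernel concentrated at scale $\sqrt{\eps}$. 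Making this quantitative requires a uniform remainder estimate in the heat-kernel expansion tailored to Hessian geometries and compatible with integration against $e^{-f}$, which is not available in the required generality --- hence the conjecture rather than a theorem. Sharpness (the impossibility of $o(\eps)$) follows from the same expansion: $a_{1}$ is generically nonzero, and a one-dimensional example with $\varphi$ strictly convex but not quadratic already witnesses a $\Theta(\eps)$ lower bound.

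For the corollary \eqref{eq:bp-conv}, split $\mathbb{E}_{\Pi^{\eps}}[Y|X=x]-\nabla\varphi(x)$ as $(\mathbb{E}_{\Pi^{\eps}}[Y|X=x]-\mathbb{E}_{\bar\ell_{\eps}}[Y|X=x])+(\mathbb{E}_{\bar\ell_{\eps}}[Y|X=x]-\nabla\varphi(x))$. The second difference is $O(\eps)$ pointwise by Theorem \ref{thm:generator-transfer} applied componentwise to $\nabla\varphi$, contributing $O(\eps^{2})$ to the integral. For the first difference, the chain rule for relative entropy disintegrates the conjectured global bound into $\int H(\bar\ell_{\eps}(\cdot|x)|\Pi_{\eps}(\cdot|x))e^{-f(x)}dx=O(\eps)$. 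Both conditional kernels concentrate at scale $\sqrt{\eps}$ and, under a rescaled log-Sobolev/semiconvexity assumption, satisfy a $T_{2}$-transport inequality with constant $O(\eps)$; this converts the conditional KL bound to a conditional $W_{2}^{2}$ of order $O(\eps^{2})$, and Jensen's inequality applied to the conditional means yields \eqref{eq:bp-conv}.
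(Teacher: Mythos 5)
This statement is a conjecture, so neither the paper nor any proposal can be expected to fully prove it; what you have written is, like the paper's own Remark~\ref{remark:justification-conjecture}, a rationale. Your rationale broadly agrees with the paper's, and correctly identifies the missing ingredient: a uniform (globally valid, quantitative) remainder estimate in the short-time heat kernel expansion on the Hessian manifold. However, your decomposition differs from the one the paper actually uses in the proof of Theorem~\ref{thm:mld-quad-sb-comparison}: the paper pushes forward by $(\Id,\nabla\varphi)$ to land in $\Pi(e^{-f},e^{-h})$ and then writes $H(\bar\ell_\eps|\Pi_\eps)=H(\bar\ell_\eps|q_\eps)-H(\Pi_\eps|q_\eps)$, exploiting the fact that $\log(d\Pi_\eps/dq_\eps)$ is a sum of univariate functions so the cross terms vanish identically; you instead pull back by $(\Id,\nabla\varphi^*)$ and interpolate through $\pi^H_\eps$. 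Both routes converge on the same kernel-comparison obstacle, though the paper's is cleaner because the Schr\"odinger potentials $A^\eps,B^\eps$ disappear by the marginal symmetry rather than requiring the score-function control you gesture at.

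There is a genuine gap in your sketch of the corollary \eqref{eq:bp-conv}. To bound $\|\Exp{\Pi^\eps}[Y|X=x]-\Exp{\bar\ell_\eps}[Y|X=x]\|$ via a Talagrand inequality, the $T_2$ must be satisfied by the measure appearing on the \emph{right} of the relative entropy. The diffusion conditional $\bar\ell_\eps(\cdot|x)$ is the pushforward of a heat kernel and inherits a $\mathrm{CD}$ condition, so $T_2(O(\eps))$ is available for it; the Schr\"odinger conditional $\Pi_\eps(\cdot|x)$ does not come with a comparable curvature bound. Hence the argument, patterned after Theorem~\ref{thm:generator-transfer}, requires controlling $H(\Pi_\eps(\cdot|x)\mid\bar\ell_\eps(\cdot|x))$, i.e.\ the \emph{other} direction $H(\Pi_\eps|\bar\ell_\eps)=O(\eps)$ (or the symmetric bound as in Theorem~\ref{thm:sym-rel-ent}), not the direction $H(\bar\ell_\eps|\Pi_\eps)=O(\eps)$ stated in \eqref{eq:conj-o-eps}. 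Either strengthen the conjectured bound to the symmetric relative entropy (consistent with the same-marginal Theorem~\ref{thm:sym-rel-ent}), or justify a $T_2$ inequality for the conditional Schr\"odinger bridge $\Pi_\eps(\cdot|x)$ at scale $\eps$; as written, the step "conditional KL to conditional $W_2^2$" does not go through with the measures in the roles you assign them.
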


The approximation developed in Theorem \ref{thm:mld-quad-sb-comparison} is two orders of magnitude weaker than that of the same marginal case. Our belief is that this gap is due to the curvature of the Hessian manifold. In Section \ref{subsec:affine} in the case of affine Brenier map, i.e.\ when the Hessian metric is constant, we recover an $o(\eps)$ rate in the diffusion approximation for the different marginal case in Proposition \ref{prop:affine-brenier}.

Results such as \cite[Theorem 1.1]{chiarini2022gradient} and \cite[Corollary 1]{pooladian2022entropic} establish, in the case of different marginals, the convergence of the rescaled gradients of $\Schro$ potentials, $(\eps \nabla_g \log a^{\eps}, \eps > 0)$, or the barycentric projection, to the gradient of the Kantorovich potential or Brenier map between the marginals as $\eps \downarrow 0$. The conjecture in \eqref{eq:bp-conv} gives a rate of convergence for noncompactly supported densities. For compactly supported densities, a rate of $O(\eps^2)$ is established in \cite[Corollary 1]{pooladian2022entropic}.

As a concluding remark, this paper contains several minor results that extend or modify standard facts about diffusion processes and $\Schro$ bridges to the geometric settings we consider, particularly manifolds possessing a global chart (of which Hessian manifolds are our main example of interest). Many of these results are presented in Section \ref{sec:preliminaries}, and in lieu of complete proofs that are mostly redundant, we only remark on the necessary modifications to the pre-existing literature to encompass our setting. 

\subsection{Geometric settings of interest.}\label{subsec:geom}
To motivate the generalized setting considered in this paper, we present several applications in which either the manifold setting or non-Wiener reference measure is of interest. 

In statistical data modeling it is often a natural assumption that data come from some manifold that is not Euclidean. For instance, \cite{shi2023diffusion} computes $\Schro$ bridges on $\mathbb{S}^{2}$ for use in climate modeling. The Riemannian score function appears in applications such as \cite{debortoli2022riemannian,rygaard2025score}. In computational biology it is of interest to compute $\Schro$ bridges with respect to a general diffusion reference to perform trajectory inference \cite{chizat2022trajectory,lavenant-traj-inf,yks-25}. 

This paper is especially motivated by the Hessian manifold, i.e.\ when the metric $g$ is the Hessian of a smooth convex function. For an introduction to Hessian manifolds, consult \cite[Chapters 1,6]{amari-info-geo}. Our consideration of Hessian manifolds is motivated by approximating the different marginal Euclidean $\Schro$ bridge in Theorem \ref{thm:mld-quad-sb-comparison}. However, Hessian manifolds are interesting objects in their own right. Hessian manifolds are real-valued analogues of K\"{a}hler manifolds, a popular object in differential geometry \cite[Section 3]{kolesnikov-hessian-metric}. In convex geometry, Hessian manifolds found applications in \cite{eldan-klartag-11,klartag-moment-maps13} to establish functional inequalities and other estimates.   

To briefly introduce Hessian manifolds, fix a smooth, strictly convex $\varphi: \mathbb{R}^{d} \to \mathbb{R}$ and let $\varphi^*$ denote its convex conjugate. We consider two global charts on the Hessian manifold: its primal coordinates (denoted $x$) and dual coordinates (denoted $x^* := \nabla \varphi(x)$). The metric writes in primal coordinates as $\nabla^2 \varphi$ and in dual coordinates as $\nabla^2 \varphi^*$. In finite dimensional optimization, endowing $\mathbb{R}^{d}$ with a Hessian geometry gives rise to mirror gradient flows, which can improve the convergence rate to the minimizer of a function depending on the choice of $\varphi$. For sampling, \cite{zhang-mld-20a,ahn2021efficient} introduce and analyze an analogous stochastic process called the Mirror Langevin diffusion (MLD). Let $e^{-f},e^{-h} \in \cP_2(\mathbb{R}^{d})$ be such that $(\nabla \varphi)_{\#}e^{-f} = e^{-h}$, then the primal and dual MLD (respectively) are given by the following SDEs, where $Y_t  = \nabla \varphi(X_t)$ and $(B_t, t \geq 0)$ is Euclidean Brownian motion:
\begin{align}\label{eq:MLD}\tag{MLD}
    dX_t = -\frac{1}{2}\nabla h(Y_t)dt+\sqrt{\nabla^2 \varphi^*(Y_t)}dB_t \text{ and } dY_t = -\frac{1}{2}\nabla f(X_t)dt+\sqrt{\nabla^2 \varphi(X_t)}dB_t.
\end{align}
Note that the stationary measure for the primal SDE is $e^{-f}$, and for the dual SDE it is $e^{-h}$.
In Corollary \ref{cor:non-explosion-mld}, we present a (to our knowledge) novel nonexplosion criterion for \eqref{eq:MLD} modeled on a standard criterion for diffusions on $\mathbb{R}^{d}$ \cite[Theorem 2.2.19]{royer-lsi}. The key geometric assumption is stochastic completeness of the Hessian manifold, i.e.\ that the Brownian motion on the Hessian manifold does not explode. Stochastic completeness is guaranteed when $M$ has a global lower bound on its Ricci curvature (which is our setting), but more involved criteria are known \cite[Section 4.2]{hsu-stoch-analysis-manifold}.

The \eqref{eq:MLD} has an important interpretation stemming from gradient flows, ``curves of steepest descent'', on the space of probability measures (Wasserstein space) \cite{ambrosio2005gradient}. Let $\mu \in \cP(M)$ denote the measure on the Hessian manifold whose Lebesgue density in primal and dual coordinates is equal to $e^{-f}$ and $e^{-h}$, respectively. This is a special case of \cite[Theorem 1.1]{lisini2009nonlinear}, which proves that the time marginal distributions of \eqref{eq:MLD} evolve according to the gradient flow of $H(\cdot|\mu)$ on the Wasserstein space associated to the Hessian manifold.

From an optimal transport perspective, a natural choice of convex function with which to build a Hessian geometry is the Brenier potential \cite{THEbrenier} between two probability distributions on $\mathbb{R}^{d}$. Indeed, \cite{DKPS23} establish a connection between time-rescaled iterates of the Sinkhorn algorithm and a time-inhomogenous generalization of the MLD. This connection between the MLD and (Euclidean) $\Schro$ bridges is further explored in Section \ref{sec:diff-marg}: we quantify in relative entropy the strength of approximation $(X_0,Y_{\eps})$ from \eqref{eq:MLD} (initialized at its stationary distribution) provides to the Euclidean $\Schro$ bridge from $e^{-f}$ to $e^{-h}$. 

\subsection{Outline of Paper.}
In Section \ref{sec:preliminaries}, we introduce the necessary technical preliminaries for diffusion processes and $\Schro$ bridges on manifolds. A key point of departure from the Euclidean case considered in \cite{AHMP25} is that we also must concern ourselves with the small time heat kernel expansion on a manifold-- see Proposition \ref{prop:heat-kernel-asymp}. In Section \ref{sec:diff-approx} we quantify in symmetric relative entropy a diffusion approximation to the same marginal $\Schro$ bridge in Theorem \ref{thm:sym-rel-ent}. This approximation is leveraged in Section \ref{sec:generator-analysis} to prove an ``approximate'' generator result for the family of conditional measures arising from the same marginal $\Schro$ bridge with varying temperature (Theorem \ref{thm:generator-transfer}). Additionally, Theorem \ref{thm:score-function} establishes the convergence of the gradient of $\Schro$ potentials in a slightly more restricted setting. In Section \ref{sec:diff-marg}, Theorem \ref{thm:mld-quad-sb-comparison} quantifies in relative entropy a diffusion approximation to the different marginal Euclidean $\Schro$ bridge.

\section{Preliminaries}\label{sec:preliminaries}
We now summarize the fundamental geometric and probabilistic notions required to state and prove the results of this paper. Additionally, we state several auxiliary results that are used in the following sections. Many proofs are relegated to the Appendix.

\subsection{Geometric Setting}
Recall that $(M,g)$ denotes a smooth ($C^{\infty}$), connected, complete Riemannian manifold without boundary, equipped with the Levi-Civita connection denoted $\nabla_g$. In a fixed coordinate system, we write the components functions of the metric $g$ as $(g_{ij}(x))$. As the co-metric $g^{-1}$ appears in many calculations, we write its components with upper indices $(g^{ij}(x))$. Let $d: M \times M \to [0,+\infty)$ denote the distance on $M$ induced by the metric. For $x \in M$, $T_{x}M$ denotes the tangent space of $M$ at $x$, $TM$ denotes the tangent bundle, and $\ricci_{x}: T_{x}M \times T_{x}M \to \mathbb{R}$ denotes the Ricci curvature tensor at $x$. Let $(\Gamma_{ij}^{k}(x))$ denote the Christoffel symbols for the Levi-Civita connection. In particular, for a fixed coordinate system $(x^1,\dots,x^d)$ recall that
\begin{align*}
    \nabla_{\frac{\partial}{\partial x^i}} \frac{\partial}{\partial x^{j}} = \Gamma_{ij}^{k} \frac{\partial}{\partial x^k},
\end{align*}
where the above sum is written using Einstein summation notation (as we will use for the rest of this paper).

Let $(B_t^{M}, t \geq 0)$ denote Brownian motion on $M$, and let $U \in C^2(M)$ be a potential (on which we will eventually make further assumptions). Consider the following manifold-valued SDE
\begin{equation}\label{eq:reference-process-intro}
    dX_t = -\frac{1}{2}\nabla_g U(X_t) dt + dB_t^{M}. 
\end{equation}
Observe that the above process (under suitable assumptions) is reversible with stationary measure given by $e^{-U(x)}\vol(dx)$, where $\vol(dx)$ is the volume measure measure on $M$. Recall in coordinates that
\begin{align}\label{eq:vol-measure-coords}
    \vol(dx) = \sqrt{\det g(x)}dx.
\end{align}
We emphasize that $e^{-U(x)}\vol(dx)$ need not always be a probability measure. This is the case when $U = 0$ and $M$ is, for instance, $\mathbb{R}^{d}$ with $\vol = \mathrm{Leb}$.

We consider two diffusion processes, which we will distinguish as the \textbf{reference} and the \textbf{target}. The \textbf{reference process} is the solution to the SDE \eqref{eq:reference-process-intro} with drift equal to $-\frac{1}{2}\nabla_{g} U_{\diffref}$, for some $U_{\diffref} \in C^2(M)$. We write the stationary measure as $\diffref(dx) := \exp(-U_{\diffref})\vol(dx)$. We use the name ``reference process'' as its law provides the reference measure with respect to which the $\Schro$ bridge will be computed. In this paper, the applications developed in Sections \ref{sec:generator-analysis} and \ref{sec:diff-marg} take $U_{\diffref} \equiv 0$, that is, $\diffref = \vol$. However, it is of interest to consider more general reference process in both theoretical \cite{fathigozlan20,conforti21deriv,chiarini2022gradient} and applied \cite{lavenant-traj-inf,yks-25} settings. Thus, in Section \ref{sec:diff-approx} we take a general reversible diffusion as reference process. 

The second process we consider is the \textbf{target diffusion}, which \emph{must have an integrable stationary measure}. We then normalize this stationary measure and insist it is a \textbf{probability} measure. Given some $\mu \in \cP(M)$ with $\mu = \exp(-U_{\mu})\vol$, the target diffusion is the solution to \eqref{eq:reference-process-intro} with drift equal to $-\frac{1}{2}\nabla_{g} U_{\mu}$. 

The following definition is standard and was developed in \cite{lott-villani-09,strum-mms-06}.
\begin{definition}[Curvature Dimension Condition]
    For $\kappa \in \mathbb{R}$ and $N \in [d,+\infty]$, the triplet $(M,g,e^{-U}\vol)$ satisfies
$\text{CD}(\kappa,N)$ if the following inequality holds for all $x \in M$ 
\begin{align}\label{eq:cd-defn}\tag{CD($\kappa,N$)}
    \ricci + \mathrm{Hess}(U) \geq \kappa g + \frac{1}{(N-d)}\nabla_{g} U \otimes \nabla_{g} U. 
\end{align}
\end{definition}
As an example, suppose that $\ricci \geq \kappa g$, then the manifold Brownian motion on $M$ ($U = 0$) satisfies $\text{CD}(\kappa,d)$. On the other hand, the standard Gaussian on $\mathbb{R}^{d}$ satisfies $\text{CD}(1,+\infty)$. Similarly, \cite[Theorem 4.3]{kolesnikov-hessian-metric} gives a condition under which the Mirror Langevin diffusion satisfies $\text{CD}(C,+\infty)$ for some $C \in \mathbb{R}$. For a comprehensive reference on curvature dimension conditions and their interactions with Markov processes, we refer readers to the excellent monograph \cite{bgl-markov} (although beware of the different constant conventions used). 

We now state the assumptions on our geometric setting. 
\begin{assumption}\label{assumption:manifold}
    Let $(M,g)$ be a $d$-dimensional smooth, connected, complete Riemannian manifold without boundary with a global lower bound on its Ricci tensor, meaning that there is some $\kappa \in \mathbb{R}$ such that for all $x \in M$ and $v \in T_x M$, $\ricci_{x}(v,v) \geq \kappa g(v,v)$.
    
    In addition, we assume one of the following settings:
    \begin{itemize}
        \item[(H1)] $(M,g)$ is compact. 
        \item[(H2)] $(M,g)$ possesses a global coordinate chart, and in such a fixed chart there exists $\alpha,\beta > 0$ such that $\alpha \Id \leq g(x) \leq \beta \Id$ for all $x \in M$ and all first derivatives of $g$ are bounded.   
        \end{itemize}
\end{assumption}
Regarding (H2), we will fix the global chart satisfying the specified behavior of the metric and identify $M$ with $\mathbb{R}^{d}$. This is the setting considered in \cite{lisini2009nonlinear}, although our assumptions are more stringent on $g$.  
The main example we have in mind for (H2) is the Hessian manifold.

Next, we delineate the collection of assumptions we make on the reference measure $\diffref = \exp(-U_{\diffref})\vol \in \cM_{+}(M)$ and the target measure $\mu = \exp(-U_{\mu})\vol \in \cP(M)$.
\begin{assumption}\label{assumption:diffusions}
    Let $U_{\diffref}, U_{\mu} \in C^3(M)$ be such that
    \begin{itemize}
        \item $(M,g,\diffref)$ satisfies $\mathrm{CD}(\kappa_{\diffref},N)$ for $\kappa_ \diffref \in \mathbb{R}$ and $N \in [d,+\infty)$ \textbf{OR} $\mathrm{CD}(\kappa_{\diffref},+\infty)$ holds with $\diffref(M) = 1$. 
        \item $(M,g,\mu)$ satisfies $\mathrm{CD}(\kappa_{\mu},+\infty)$ for some $\kappa_{\mu} \in \mathbb{R}$, $\mu \in \cP_2(M)$, and $H(\mu|\diffref)$ and $I(\mu|\diffref)$ are finite. 
    \end{itemize}

    Under (H2), additionally assume that $\inf\limits_{M} U_{\diffref} > -\infty$. Let $\cU_{\mu},\cU_{\diffref}$ be as defined in \eqref{eq:reciprocal-characteristic}, then assume that $\inf \limits_{M} \cU_{\mu}, \inf\limits_{M} \cU_{\diffref} > -\infty$. If $\diffref \notin \cP(M)$, then assume that $\nabla_g U_{\diffref}$ is Lipshitz. 
\end{assumption}
We emphasize that $\diffref = \vol$ satisfies the above assumptions. 

\subsection{Manifold-Valued Diffusion Processes}
For a detailed reference on manifold-valued stochastic processes, we refer readers to the excellent text \cite{hsu-stoch-analysis-manifold}. A process $(X_t, t \geq 0)$ on a manifold is called an $\cL$-diffusion process, for some smooth second order elliptic operator $\cL$, if for all $f \in C^{\infty}(M)$ the process 
\begin{align}
    M^{f}(X)_t := f(X_t) - f(X_0) - \int_0^{t} \cL f(X_s)ds, \quad t \geq 0 
\end{align}
is a local martingale with respect to the natural filtration.  

\subsubsection{Manifold Brownian Motion and Heat Kernel Expansion}
A process $(B_t, t \geq 0)$ on $M$, given some initial distribution $\mu \in \cP(M)$, is called a \textbf{(manifold) Brownian motion} if it is a $\frac{1}{2}\Delta_g$-diffusion process \cite[Proposition 3.2.1]{hsu-stoch-analysis-manifold}. The law of this process on the path space $C([0,\infty),M)$ is called the \textbf{Wiener measure} starting from $\mu$. Here, $\Delta_g$ is the Laplace-Beltrami operator on $M$, expressed in local coordinates for $u \in C_c^{\infty}(M)$ as
\begin{align}\label{eq:laplace-beltrami-coords}
    \Delta_g u &= \frac{1}{\sqrt{\det g}}\frac{\partial}{\partial x^i} \left(\left(\sqrt{\det g}\right) g^{ij} \frac{\partial}{\partial x^j} u\right) = g^{ij}\frac{\partial^2}{\partial x^i\partial x^j}u-g^{ij}\Gamma_{ij}^{k}\frac{\partial}{\partial x^k}u.
\end{align}

When the Ricci curvature of $M$ has a global constant lower bound, the manifold Brownian motion exists and has a.s.\ infinite explosion time \cite[Theorem 3.5.3]{hsu-stoch-analysis-manifold}. Thus, under Assumption \ref{assumption:manifold}, the manifold Brownian motion does not explode.

In a local coordinate system, the manifold Brownian motion admits the following expression \cite[equation (3.3.11)]{hsu-stoch-analysis-manifold}
\begin{align}\label{eq:local-mani-bm}
    dB_t = \sqrt{g^{-1}(B_t)}dW_t - \frac{1}{2}g^{ij}(B_t)\Gamma_{ij}^{k}(B_t)dt,
\end{align}
where $(W_t, t \geq 0)$ is a standard $d$-dimensional Euclidean Brownian motion. The manifold Brownian motion is a reversible process with stationary measure equal to $\vol$. 

Let $(r_{t}(\cdot,\cdot), t \geq 0)$ denote fundamental solution to the heat equation $\partial_t - \frac{1}{2}\Delta_g = 0$, then $(r_{t}(\cdot,\cdot), t \geq 0)$ is the transition density of Brownian motion on $M$ \cite[Theorem 4.1.4, Proposition 4.1.6]{hsu-stoch-analysis-manifold}. We emphasize here that each $r_{t}(x,\cdot)$ is the Radon-Nikodym derivative of the law of $B^{M}_{\eps}|B^{M}_0 = x$ with respect to the volume measure. That is, in the case of a.s.\ infinite explosion time, it holds for all $t > 0$ that
\begin{align*}
    \int_M r_{t}(x,y)\vol(dy) = 1. 
\end{align*}
A key technical tool of which we make frequent use is the following small time asymptotic expansion of $(r_t(\cdot,\cdot), t > 0)$, a subject area in its own right with a rich history. Historically, this expansion is attributed to the efforts of \cite{varadhan-diff-ldp67,molchanov-exp75,benarous-expan-88,azencott84} and many others. For clarity, we present Proposition \ref{prop:heat-kernel-asymp} with the notation developed in \cite[Theorem 1.4]{neel2025uniformlocalizedasymptoticssubriemannian}, which works in significantly greater generality than necessary for our purposes (and on a different time scale). We collate the results and some of its consequences that we will make use of in the following proposition.

For each $x \in M$, let $C_x \subset M$ denote the cut locus of $x$ \cite[page 308]{riemannian-manifolds-lee}. It holds that $z \in C_x$ if and only if $x \in C_z$. Define the following ``exceptional'' set of points on the product manifold $M \times M$ as $\cC := \{(x,z) \in M \times M: x \in C_z\}$.

\begin{proposition}\label{prop:heat-kernel-asymp}
    On a complete Riemannian manifold $(M,g)$, the following expansion holds for some choice of smooth functions $c_0, R(\eps,\cdot,\cdot): (M \times M) \setminus \cC \to \mathbb{R}$, 
    \begin{align}\label{eq:heat-kernel-expan}
        r_{\eps}(x,z) &= \frac{1}{(2\pi \eps)^{d/2}}\exp\left(-\frac{1}{2\eps}d_g^2(x,z)\right)\left(c_0(x,z)+\eps R(\eps,x,z)\right).
    \end{align}
    Additionally, it holds that $\left(\nabla_{x} \log c_{0}(x,z)\right)|_{z=x}=\lim\limits_{z \to x} \nabla_{x} \log c_{0}(x,z) = 0 \in T_{x}M$. 

    We also have a uniform bound in the following sense. Let $K \subset (M \times M) \setminus \cC$ be a compact set. Then, for $(x,z) \in K$ it holds that $c_{0}(x,z) > 0$, and for any $\eps > 0$ 
    \[
     \sup\limits_{t \in (0,2\eps)}\sup \limits_{(x,z) \in K} \abs{R(t,x,z)}, \sup\limits_{t \in (0,2\eps)}\sup \limits_{(x,z) \in K} \norm{\nabla_x R(t,x,z)}_{g} < +\infty. 
    \]   
\end{proposition}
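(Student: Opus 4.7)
The plan is to reduce this proposition to classical short-time heat kernel asymptotics, extracting the two distinguished consequences (diagonal flatness of $\log c_0$ and uniform control of $R$) via a symmetry argument and a compactness argument respectively. The underlying expansion is the Minakshisundaram-Pleijel parametrix construction off the cut locus, sharpened by the Varadhan/Molchanov/Azencott analysis of the remainder. The uniform form needed here is precisely what \cite[Theorem 1.4]{neel2025uniformlocalizedasymptoticssubriemannian} provides, after accounting for the time rescaling between generators $\frac{1}{2}\Delta_g$ and $\Delta_g$.

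First I would invoke the parametrix construction to obtain \eqref{eq:heat-kernel-expan} on $(M \times M) \setminus \cC$, identifying $c_0(x,z)$ with (a multiple of) the Van Vleck-Morette determinant built from the Jacobian of the Riemannian exponential map. This function is smooth on $(M\times M)\setminus \cC$, strictly positive there, satisfies $c_0(x,x) \equiv 1$ along the diagonal, and is symmetric in its two arguments, $c_0(x,z) = c_0(z,x)$, inherited from $r_\eps(x,z) = r_\eps(z,x)$ via self-adjointness of $\frac{1}{2}\Delta_g$ with respect to $\vol$ together with the symmetry of $d_g$.

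The diagonal identity $\nabla_x \log c_0(x,z)|_{z=x} = 0$ then follows from a short chain-rule argument. Working in any local chart at $x$, let $\partial_{x^i}$ and $\partial_{z^i}$ denote partials in the first and second slot of $c_0$. Differentiating the diagonal identity $c_0(x,x) \equiv 1$ in $x^i$ gives $\partial_{x^i} c_0(x,z)|_{z=x} + \partial_{z^i} c_0(x,z)|_{z=x} = 0$, while the symmetry relation $c_0(x,z) = c_0(z,x)$ evaluated at $z = x$ yields $\partial_{x^i} c_0(x,z)|_{z=x} = \partial_{z^i} c_0(x,z)|_{z=x}$. These two identities force $\partial_{x^i} c_0(x,z)|_{z=x} = 0$ for every $i$, and since $c_0(x,x) = 1$ this is exactly the claimed vanishing of the Riemannian gradient.

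For the uniform bounds on a compact $K \subset (M\times M)\setminus\cC$, positivity of $c_0$ on $K$ is immediate from continuity and the non-vanishing of the Van Vleck-Morette determinant off the cut locus. The estimates on $\sup_{t\in(0,2\eps)}\sup_K |R(t,\cdot,\cdot)|$ and on $\sup_{t\in(0,2\eps)}\sup_K \norm{\nabla_x R(t,\cdot,\cdot)}_g$ are direct specializations of Neel's theorem: the remainder is jointly continuous in $(t,x,z)$ on $[0,\tau] \times K$ for small $\tau$ (extending continuously to $t = 0$ by the next coefficient in the expansion), and smoothness of the parametrix allows differentiation in $x$ with standard parabolic regularity controlling the spatial derivative of the correction. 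The step I expect to require the most care is this last differentiation-in-$x$ bound, since it is sometimes not stated explicitly in the literature; I would establish it either by applying Neel's argument to the parabolic quantity $\nabla_x r_\eps$, or by combining term-by-term differentiation of the Minakshisundaram-Pleijel expansion with Schauder-type parabolic estimates on compact subsets of $(M\times M)\setminus\cC$.
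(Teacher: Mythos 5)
Your proposal is correct, and the overall structure (cite the Neel result for the existence of the expansion, the uniform bounds on $R$, and the $x$-gradient bound on $R$) coincides with the paper's proof. The genuine point of departure is your treatment of the diagonal vanishing $\nabla_x \log c_0(x,z)\big|_{z=x} = 0$: the paper handles this by citing the physics literature on the Van Vleck--Morette determinant (Avramidi, Eq.\ (A.20), together with the swap identity (A.1)), whereas you give a clean, self-contained argument from two elementary facts — $c_0(x,x)\equiv 1$ (from $J(\exp_x)(0)=\mathrm{Id}$) and the symmetry $c_0(x,z)=c_0(z,x)$ (inherited from $r_\eps(x,z)=r_\eps(z,x)$ and symmetry of $d_g$ by passing $\eps\downarrow 0$). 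Writing $\partial_1,\partial_2$ for partials in the two slots of a fixed chart, differentiating $c_0(x,x)\equiv 1$ gives $(\partial_1 c_0)(x,x)+(\partial_2 c_0)(x,x)=0$, while symmetry gives $(\partial_1 c_0)(x,x)=(\partial_2 c_0)(x,x)$, forcing both to vanish; since $c_0(x,x)=1$ the Riemannian gradient of $\log c_0$ vanishes too. This is shorter, avoids the external citation, and makes transparent \emph{why} the coincidence limit is zero (it is a symmetry obstruction). The trade-off is that the physics-literature reference also supplies higher coincidence limits of $\log c_0$ derivatives for free, which your symmetry argument does not immediately give, but those are not needed in this paper. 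Your caveat about the $\nabla_x R$ bound is well placed — the paper also simply attributes this to Neel's theorem without further comment — so the two proofs carry the same residual reliance there.
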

Recall that the Riemannian log based at a point $x$, denoted $\log_x$, is the inverse of $\exp_x$ (defined on the subset of $M$ where it exists). Then, $c_{0}(x,z) = [J(\exp_{x})(\log_{x}(z))]^{-1/2}$, i.e.\ the determinant of the Jacobian of $\exp_{x}$ evaluated at $\log_{x}(z)$. Taking the log and gradient in $x$ of \eqref{eq:heat-kernel-expan} gives
\begin{align}\label{eq:grad-log-heat-ker-exp}
    \nabla_{x} \log r_{\eps}(x,z) = \frac{1}{\eps}\log_{x} z + \nabla_{x} \log c_{0}(x,z) + \nabla_{x} \log\left(1+\eps\frac{R(\eps,x,z)}{c_{0}(x,z)}\right).
\end{align}

\subsubsection{Manifold Diffusions}
In this subsection, we discuss existence, uniqueness, and non-explosion of the manifold-valued diffusions of the form \eqref{eq:reference-process-intro}. We also perform some essential computations that we frequently use in the sequel. Many proofs are relegated to the Appendix for readability. Recall from the previous subsection that under Assumption \ref{assumption:manifold}, the manifold Brownian motion exists and has a.s.\ infinite explosion time. 

First, consider Assumption \ref{assumption:manifold} (H1), that is, $(M,g)$ is a compact manifold. In this case, as the potentials $U_{\diffref}, U_{\mu}$ in Assumption \ref{assumption:diffusions} are in $C^3(M)$, the drifts of the target and reference diffusions are Lipshitz. As such, existence, uniqueness, and nonexplosions of these diffusions are immediate.

Next, we consider Assumption \ref{assumption:manifold} (H2). In this case, the most straightforward way to proceed is to identify $M$ with $\mathbb{R}^{d}$ and use standard SDE results. In this global chart, a manifold diffusion of the form \eqref{eq:reference-process-intro} writes as
\begin{align}\label{eq:manifold-diff-global-chart}
    dX_t = \left( -\frac{1}{2}g^{kj}(X_t) \frac{\partial}{\partial x^{j}}U(X_t) - \frac{1}{2}g^{ij}(X_t)\Gamma_{ij}^{k}(X_t)\right)dt + \sqrt{g^{-1}(X_t)}dB_t,
\end{align}
where $(B_t, t \geq 0)$ is a standard $d$-dimensional Euclidean BM. This can be seen by writing the generator for \eqref{eq:reference-process-intro} in coordinates and recalling that $\Delta_g$ writes as \eqref{eq:laplace-beltrami-coords}. Under Assumption \ref{assumption:manifold} (H2), the diffusion matrix is bounded and Lipshitz, and $x \mapsto \frac{1}{2}g^{ij}(x)\Gamma_{ij}^{k}(x)$ is bounded. When $\diffref \notin \cP(M)$, under Assumption \ref{assumption:diffusions}, $\nabla_g U_{\diffref}$ is Lipshitz. As the manifold distance is equivalent to the Euclidean distance, it holds that the drift of \eqref{eq:manifold-diff-global-chart} has at most linear growth. By \cite[Theorem 5.2.9]{karatshreve91}, existence, uniqueness, and nonexplosion of the reference process holds.

In the absence of a Lipshitz drift, we present the following non-explosion criterion under (H2) that is modeled after the criterion of \cite[Theorem 2.2.19]{royer-lsi}. Thus, Assumption \ref{assumption:diffusions} guarantees existence, uniqueness and non-explosion of the target diffusion as well as the reference diffusion when it has integrable reversible measure.  
\begin{proposition}\label{prop:ex-uniq-nonexp-mld}
    Let $(M,g)$ be a Riemannian manifold satisfying Assumption \ref{assumption:manifold} (H2), and let $U: M \to \mathbb{R}$ satisfy the following condition, where $x_0 \in M$ is arbitrary
\begin{equation}\label{assumption:potent}\tag{U}
    U \in C^2(M), U(x) \to +\infty \text{ as $d(x,x_0) \to +\infty$, and } \inf\limits_{x \in M}\left(\frac{1}{4}\norm{\nabla_g U}_{g}^2 - \frac{1}{4}\Delta_g U\right) > -\infty. 
\end{equation}
    Consider the following manifold-valued SDE
    \begin{align}\label{eq:hess-mani-sde-mld}
        dX_t = - \frac{1}{2}\nabla_g U(x) dt + dB_t^{M}, \quad t \geq 0,
    \end{align}
    where $(B_{t}^{M}, t \geq 0)$ is the manifold Brownian motion. Then, a unique weak solution to \eqref{eq:hess-mani-sde-mld} exists and has almost sure infinite explosion time.
\end{proposition}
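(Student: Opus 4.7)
The plan is to combine local existence and uniqueness in the global chart with a proper Lyapunov function to prevent explosion, directly patterning the argument on the cited Euclidean criterion. Working in the global chart from Assumption \ref{assumption:manifold} (H2), the SDE \eqref{eq:hess-mani-sde-mld} writes in coordinates exactly as \eqref{eq:manifold-diff-global-chart} (with $U$ in place of $U_{\diffref}$). Under (H2) the diffusion matrix $\sqrt{g^{-1}}$ is smooth and uniformly bounded with bounded derivatives, the Christoffel correction $\tfrac{1}{2}g^{ij}\Gamma_{ij}^{k}$ is smooth and bounded, and the drift $\tfrac{1}{2}g^{kj}\partial_{j}U$ is locally Lipschitz since $U \in C^{2}(M)$. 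Hence standard SDE theory yields a pathwise unique strong solution up to an explosion time $\tau_{\infty}:=\lim_{n\to\infty}\sigma_{n}$, where $\sigma_{n}:=\inf\{t\geq 0: d_{g}(X_{t},x_{0})\geq n\}$.

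To upgrade this to $\tau_{\infty}=+\infty$ almost surely, I would use the Lyapunov function $V:=U-\inf_{M}U$, which is a well-defined nonnegative $C^{2}$ function because $U$ is continuous and proper (i.e.\ $U(x)\to+\infty$ as $d_{g}(x,x_{0})\to+\infty$). Its sublevel sets $\{V\leq n\}$ are relatively compact by Hopf-Rinow together with the equivalence of $d_{g}$ and Euclidean distance in the global chart. The generator of \eqref{eq:hess-mani-sde-mld} is $\cL=\tfrac{1}{2}\Delta_{g}-\tfrac{1}{2}\langle\nabla_{g}U,\nabla_{g}\cdot\rangle_{g}$, and a direct computation gives
\begin{equation*}
\cL V(x)=\cL U(x)=-2\left(\tfrac{1}{4}\|\nabla_{g}U(x)\|_{g}^{2}-\tfrac{1}{4}\Delta_{g}U(x)\right),
\end{equation*}
which by the third condition in \eqref{assumption:potent} is uniformly bounded above by some finite constant $C$.

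Setting $\tau_{n}:=\inf\{t\geq 0: V(X_{t})\geq n\}$, Itô's formula applied to $V\in C^{2}(M)$ up to the bounded stopping time $t\wedge\tau_{n}$, followed by optional stopping to remove the local martingale part, yields $\mathbb{E}[V(X_{t\wedge\tau_{n}})]\leq V(X_{0})+Ct$. Combined with $V\geq 0$ and $V(X_{\tau_{n}})\geq n$ on $\{\tau_{n}\leq t\}$ (by pathwise continuity), Markov's inequality gives
\begin{equation*}
n\,\mathbb{P}(\tau_{n}\leq t)\leq\mathbb{E}[V(X_{t\wedge\tau_{n}})]\leq V(X_{0})+Ct,
\end{equation*}
which vanishes as $n\to\infty$. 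Since $\tau_{n}\uparrow\tau_{\infty}$ by precompactness of the sublevel sets of $V$, we conclude that $\tau_{\infty}=+\infty$ almost surely. The main technical point is verifying that the chart-based Itô calculus agrees with the intrinsic manifold SDE, which holds because the manifold generator $\cL$ computed from \eqref{eq:hess-mani-sde-mld} coincides with the generator of the chart-based SDE \eqref{eq:manifold-diff-global-chart} on $C^{2}$ functions; beyond that, the argument is a direct transplant of the classical Euclidean Lyapunov criterion.
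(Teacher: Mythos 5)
Your proof is correct and follows essentially the same strategy as the paper: reduce to the global chart, establish local existence and uniqueness from the regularity of the coefficients, and then obtain non-explosion from a Lyapunov argument with $U$ (or equivalently $V=U-\inf U$) as the Lyapunov function, using that $\cL U = \tfrac{1}{2}\Delta_g U - \tfrac{1}{2}\|\nabla_g U\|_g^2$ is bounded above by the reciprocal-characteristic lower bound in \eqref{assumption:potent}. The paper's version is more compressed, fixing $T_R=\inf\{t: U(X_t)>R\}$, invoking Dynkin's formula, and then simply citing the proof of \cite[Theorem 2.2.19]{royer-lsi} as carrying over unchanged; you spell out the optional-stopping and Markov-inequality steps that the cited theorem encapsulates. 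Minor remark: the paper's displayed Dynkin identity has a spurious minus sign in front of the time integral (standard Dynkin reads $\mathbb{E}[U(X_{t\wedge T_R})]-U(x_0)=\mathbb{E}\bigl[\int_0^{t\wedge T_R}\cL U(X_s)\,ds\bigr]$), and your version fixes the sign so that the global upper bound on $\cL U$ yields the needed estimate; this is a typo in the paper, not a flaw in your argument.
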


We translate this result into the Hessian manifold introduced in Section \ref{subsec:geom} to apply \eqref{assumption:potent} to obtain non-explosion criterion for \eqref{eq:MLD}, using only the non-explosion of the Brownian motion on the Hessian manifold and the criterion \eqref{assumption:potent}. 
\begin{corollary}[Non-explosion of \eqref{eq:MLD}]\label{cor:non-explosion-mld}
    Let $(M,g)$ be a smooth Hessian manifold such that $(B_t^{M}, t \geq 0)$ exists, is unique, and has a.s.\ infinite explosion time. Then the Mirror Langevin diffusion \eqref{eq:MLD} may be written as a manifold diffusion as
    \begin{align}\label{eq:mld-manifold-diff}
        dX_t = \frac{1}{2}\nabla_g \log\left(\frac{d\mu}{d\vol}\right)(X_t)dt + dB_t^{M}, 
    \end{align}
    where $\mu \in \cP(M)$ is the manifold measure whose Lebesgue density in primal and dual coordinates is given by $e^{-f}$ and $e^{-h}$, respectively. When the potential $-\log\left(\frac{d\mu}{d\vol}\right)$ satisfies \eqref{assumption:potent}, then \eqref{eq:mld-manifold-diff} and thus \eqref{eq:MLD} has a unique solution with a.s.\ infinite explosion time. 
\end{corollary}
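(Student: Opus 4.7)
The plan is to reduce the corollary to Proposition \ref{prop:ex-uniq-nonexp-mld} by verifying that the primal MLD SDE in \eqref{eq:MLD} is exactly the expression of the manifold-valued SDE \eqref{eq:mld-manifold-diff} in the global primal chart afforded by the Hessian structure. Once this equivalence is established, condition \eqref{assumption:potent} applied to $U := -\log(d\mu/d\vol)$ yields existence, uniqueness, and non-explosion in one stroke (with the standing non-explosion of $B^M$ taking the place of the H2 bounds on $g$ within the proof of Proposition \ref{prop:ex-uniq-nonexp-mld}).

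First I would rewrite \eqref{eq:mld-manifold-diff} in the primal chart. Using the local-chart expression \eqref{eq:local-mani-bm} for manifold Brownian motion and the identity $(\nabla_g F)^k = g^{ki}\partial_i F$ for the Riemannian gradient, the SDE \eqref{eq:mld-manifold-diff} takes the coordinate form
\begin{equation*}
    dX_t^k = \tfrac{1}{2}g^{ki}(X_t)\,\partial_i \log\tfrac{d\mu}{d\vol}(X_t)\,dt \;-\; \tfrac{1}{2}g^{ij}(X_t)\Gamma^k_{ij}(X_t)\,dt \;+\; \bigl(\sqrt{g^{-1}(X_t)}\,dW_t\bigr)^k.
\end{equation*}
The diffusion coefficient agrees immediately with that of the primal MLD, since $g(x) = \nabla^2\varphi(x)$ gives $g^{-1}(x) = \nabla^2\varphi^*(\nabla\varphi(x)) = \nabla^2\varphi^*(Y_t)$.

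The second step is to match the drift against $-\tfrac{1}{2}\nabla h(Y_t)$, and this rests on two ingredients. The first is the Hessian-manifold simplification $g^{ij}\Gamma^k_{ij} = \tfrac{1}{2}g^{ki}\partial_i \log\det g$: because $g_{ij} = \partial_i\partial_j\varphi$, the tensor $\partial_l g_{ij}$ is totally symmetric in $(i,j,l)$, so the three terms in the Christoffel formula collapse to $\tfrac{1}{2}g^{kl}g^{ij}\partial_l g_{ij} = \tfrac{1}{2}g^{kl}\partial_l \log\det g$. The second is the change-of-variables relation $h(\nabla\varphi(x)) = f(x) + \log\det \nabla^2\varphi(x)$ arising from $(\nabla\varphi)_{\#} e^{-f}dx = e^{-h}dy$; differentiating in $y$ and using $\partial x^i/\partial y^k = g^{ik}(x)$ gives $(\nabla h)^k(Y_t) = g^{ki}(X_t)\partial_i f(X_t) + g^{ki}(X_t)\partial_i \log\det g(X_t)$. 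Combining these with the primal identity $\log(d\mu/d\vol)(x) = -f(x) - \tfrac{1}{2}\log\det g(x)$, which follows from $\vol = \sqrt{\det g}\,dx$ in the primal chart, shows that the drift of \eqref{eq:mld-manifold-diff} reproduces $-\tfrac{1}{2}\nabla h(Y_t)$ exactly. The dual MLD is handled identically in the dual global chart, with $\varphi$ replaced by $\varphi^*$.

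The main obstacle is not conceptual depth but careful bookkeeping: one must arrange for the $\tfrac{1}{2}\log\det g$ term inside $\nabla_g \log(d\mu/d\vol)$ to combine correctly with the Itô correction $-\tfrac{1}{2}g^{ij}\Gamma^k_{ij}$ coming from the manifold Brownian motion, so as to reproduce exactly the Jacobian correction $\partial_i \log \det g$ that arises when pulling the Euclidean gradient $\nabla h$ back to the primal chart via the change-of-variables formula. With the SDE equivalence in hand, the corollary follows from the argument of Proposition \ref{prop:ex-uniq-nonexp-mld} applied to the potential $U = -\log(d\mu/d\vol)$, where the hypothesis that $B^M$ is globally defined is used in place of the boundedness of $g$ to close the non-explosion argument.
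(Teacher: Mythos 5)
Your proof is correct and follows essentially the same line as the paper's: derive the change-of-variables identity $h(\nabla\varphi(x)) = f(x) + \log\det\nabla^2\varphi(x)$, compute the Christoffel symbols of the Hessian metric, and verify that the coordinate drift of \eqref{eq:mld-manifold-diff} collapses to $-\tfrac{1}{2}\nabla h(Y_t)$, then invoke Proposition \ref{prop:ex-uniq-nonexp-mld}. The one presentational difference is that you make the intermediate identity $g^{ij}\Gamma^k_{ij} = \tfrac{1}{2}g^{kl}\partial_l\log\det g$ explicit (it follows from the total symmetry of $\partial^3\varphi$), whereas the paper keeps the Christoffel term in the form $\tfrac{1}{4}\partial^2_{ij}\varphi^*\partial^2_{k\ell}\varphi^*\partial^3_{\ell ij}\varphi$ and matches it directly against the $\log\det$ correction coming from differentiating the change-of-variables relation. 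The two bookkeeping routes are equivalent; your version isolates a cleaner reusable identity (the same one the paper invokes later in Theorem \ref{thm:score-function}), while the paper's keeps everything in terms of explicit derivatives of $\varphi$ and $\varphi^*$.

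One small note: your remark that the corollary's hypothesis on $B^M$ ``takes the place of the (H2) bounds on $g$'' is a reasonable reading of the corollary statement, but you should be careful that Proposition \ref{prop:ex-uniq-nonexp-mld} as stated is formulated under Assumption \ref{assumption:manifold} (H2). In the paper's application (Section \ref{sec:diff-marg}) the Hessian manifold satisfies (H2) anyway via Assumption \ref{assumption:hessian-manifold}, so the tension does not arise there. If you wanted the corollary at the stated generality (arbitrary smooth Hessian manifold with non-exploding Brownian motion), you would need to note that the existence-and-uniqueness-up-to-explosion step in Proposition \ref{prop:ex-uniq-nonexp-mld} still goes through (local Lipschitz coefficients, since $\varphi$ is smooth and strictly convex) and that the Dynkin argument only uses the lower bound from \eqref{assumption:potent} and coercivity of $U$. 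Your proof gestures at this correctly, but it is worth stating it as a remark rather than as a direct citation of the proposition.
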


\begin{proof}
    The key computation is to establish that \eqref{eq:mld-manifold-diff} writes in coordinates as \eqref{eq:MLD}, then the claim follows from Proposition \ref{prop:ex-uniq-nonexp-mld}. Recall the shorthand $x^* = \nabla \varphi(x)$. First, $d\mu/d\vol$ writes in primal coordinates as $e^{-f(x)}/\sqrt{\det \nabla^2 \varphi(x)}$ and in dual coordinates as $e^{-h(x^*)}/\sqrt{\det \nabla^2 \varphi^*(x^*)}$. By change of variables, it holds that
    \begin{align}
        h(x^*) = f(x) + \log \det \nabla^2 \varphi(x). 
    \end{align}
    Computing $\partial/\partial x^i$ on each side gives
    \begin{align}\label{eq:grad-log-cov}
        \partial^2_{ij}\varphi(x)\frac{\partial}{\partial (x^*)^j}h(x^*) = \frac{\partial}{\partial x^i} f(x) + \partial^2_{jk}\varphi^*(x^*)\partial^{3}_{jki}\varphi(x). 
    \end{align}
    In primal coordinates, the Christoffel symbols of the Levi-Civita connection write as
    \begin{align}\label{eq:cris-sym-primal}
        \Gamma_{ij}^{k}(x) = \frac{1}{2}\partial^2_{k\ell}\varphi^*(x^*)\partial^{3}_{\ell ij}\varphi(x). 
    \end{align}
    By \eqref{eq:manifold-diff-global-chart}, in primal coordinates the manifold diffusion in \eqref{eq:mld-manifold-diff} has diffusion matrix equal to $\sqrt{\nabla^2 \varphi^*(x^*)}$ and drift equal to 
    \begin{align*}
        &-\frac{1}{2}\partial^2_{k\ell}\varphi^*(x^*)\left(\frac{\partial}{\partial x^\ell}f(x)+\frac{1}{2}\frac{\partial}{\partial x^\ell}\log \det \nabla^2 \varphi(x) \right)- \frac{1}{4}\partial^2_{ij}\varphi^*(x^*)\partial^2_{k\ell}\varphi^*(x^*)\partial^3_{\ell ij}\varphi(x)\\
        &= - \frac{1}{2}\partial^2_{k\ell}\varphi^*(x^*)\left(\frac{\partial}{\partial x^\ell}f(x)+ \partial^2_{ij}\varphi^*(x^*)\partial^3_{\ell ij}\varphi(x)\right) = -\frac{1}{2}\frac{\partial}{\partial (x^*)^{\ell}}h(x^*),
    \end{align*}
    where the first equality follows from the gradient log determinant identity in \eqref{eq:grad-log-cov}, and the last line follows from multiplying both sides of \eqref{eq:grad-log-cov} by $\nabla^2 \varphi^*(x^*) = (\nabla^2 \varphi(x))^{-1}$. Hence, we recover \eqref{eq:MLD} in primal coordinates. The argument is analogous for dual coordinates. 
\end{proof}

The following formula for the Radon-Nikodyn derivative in $C([0,T],M)$ of a diffusion with infinite-explosion time with respect to the Wiener measure is essential and well-known. We give its justification for the setting outlined in Assumption \ref{assumption:manifold} in the Appendix. We emphasize the role of the reciprocal characteristic of the potential $U$, defined as
\begin{align}\label{eq:reciprocal-characteristic}
    \cU(x) &:= \frac{1}{8}\norm{\nabla_{g} U}_{g}^2-\frac{1}{4}\Delta_{g}U.
\end{align}
Observe that when $\inf\limits_{x \in M} \cU(x) > -\infty$, the second condition of \eqref{assumption:potent} holds. 
\begin{proposition}\label{prop:rn-deriv-diffusion}
Let $(M,g)$ be a Riemannian manifold satisfying Assumption \ref{assumption:manifold}. Consider the manifold valued SDE  
\begin{align}\label{eq:mani-diff-sde}
    dX_t = -\frac{1}{2}\nabla_{g} U(X_t)dt + dB^M_t, \quad t \geq 0
\end{align}
and assume that it has infinite explosion time. Let $\cU$ be as defined in \eqref{eq:reciprocal-characteristic}.
Fix $T > 0$ and let $P_x, W_x \in \cP(C([0,T],M))$ denote the laws of the diffusion measure and Wiener measure started from some point $x \in M$, respectively. Then
\begin{align}\label{eq:rn-path-measure}
    \frac{dP_x}{dW_x}(\omega) &= \sqrt{\frac{e^{-U(\omega_T)}}{e^{-U(x)}}}\exp\left(-\int_0^{T} \cU(\omega_t)dt\right).
\end{align}
\end{proposition}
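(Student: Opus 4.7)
}
The plan is to apply Girsanov's theorem on the manifold to obtain the standard exponential form of the Radon-Nikodym derivative, then use It\^{o}'s formula for $U$ under the Wiener measure to convert the resulting stochastic integral into a boundary term plus a Laplace-Beltrami term, after which the reciprocal characteristic $\cU$ emerges by collection.

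First, I would set up the picture. Under Assumption \ref{assumption:manifold} (H1) the manifold is compact and the drift $-\tfrac12 \nabla_g U$ is bounded, while under (H2) one has a global chart in which \eqref{eq:mani-diff-sde} can be written as the Euclidean It\^{o} SDE \eqref{eq:manifold-diff-global-chart} with bounded diffusion matrix and with drift differing from that of Brownian motion only by the smooth vector field $-\tfrac12 g^{-1}\nabla U$. In either case one can apply the manifold Girsanov theorem to assert
\begin{align*}
    \frac{dP_x}{dW_x}(\omega) = \exp\left(-\tfrac12 M^U_T - \tfrac18 \langle M^U \rangle_T\right),
\end{align*}
where under $W_x$ the process $M^U_t := \int_0^t \nabla_g U(\omega_s)\cdot dB^M_s$ is a local martingale with quadratic variation $\langle M^U\rangle_t = \int_0^t \|\nabla_g U(\omega_s)\|_g^2\,ds$. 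Justification that this is a true (not merely local) martingale, and therefore that the Girsanov change is valid, is the main technical point: in (H1) compactness makes $\|\nabla_g U\|_g$ bounded and Novikov's condition is immediate; in (H2) I would use the lower bound $\inf_M \cU_\mu > -\infty$ together with non-explosion (given directly, or supplied by \eqref{assumption:potent} via Proposition \ref{prop:ex-uniq-nonexp-mld}) to run a standard localization/Kazamaki argument. The role of the lower bound on $\cU$ is precisely to control the negative part of the exponent once the conversion below is performed.

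Next, I would invoke It\^{o}'s formula for $U \in C^3(M)$ along the manifold Brownian motion under $W_x$:
\begin{align*}
    U(\omega_T) - U(x) = M^U_T + \tfrac12 \int_0^T \Delta_g U(\omega_t)\,dt.
\end{align*}
Solving for $M^U_T$ and substituting into the Girsanov exponential gives
\begin{align*}
    \frac{dP_x}{dW_x}(\omega) = \exp\left(-\tfrac12 [U(\omega_T) - U(x)] + \tfrac14 \int_0^T \Delta_g U(\omega_t)\,dt - \tfrac18 \int_0^T \|\nabla_g U(\omega_t)\|_g^2\,dt\right),
\end{align*}
and recognizing $\tfrac18 \|\nabla_g U\|_g^2 - \tfrac14 \Delta_g U = \cU$ from \eqref{eq:reciprocal-characteristic} and rewriting the boundary term as $\sqrt{e^{-U(\omega_T)}/e^{-U(x)}}$ yields exactly \eqref{eq:rn-path-measure}.

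The main obstacle I expect is the martingale verification step in the non-compact setting (H2): one must rule out the possibility that the Dol\'{e}ans--Dade exponential is a strict local martingale. I would handle this via the standard approximation $U_n := U \wedge n$ (or an exhaustion by compacts using the properness of $U$ from \eqref{assumption:potent}), compute \eqref{eq:rn-path-measure} with $U_n$, and pass to the limit by dominated convergence, the domination coming from $\cU$ being bounded below so that the exponential integrand is bounded above uniformly in $n$. Everything else in the argument is a clean combination of Girsanov and It\^{o} and does not require any additional geometric input beyond the two identities $\Delta_g U = \mathrm{div}_g(\nabla_g U)$ and $\langle M^U \rangle_t = \int_0^t \|\nabla_g U\|_g^2\,ds$, both of which are classical in the manifold-valued stochastic calculus of \cite{hsu-stoch-analysis-manifold}.
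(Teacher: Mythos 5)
The core of your argument matches the paper's: write the Girsanov density for the drift change, then apply It\^{o}'s formula to $U$ under $W_x$ to replace the stochastic integral $\int \nabla_g U \cdot dB^M$ with the boundary term $U(\omega_T)-U(x)$ and the Bochner term $\tfrac12\int \Delta_g U$, after which $\cU$ appears by collecting terms. The paper does exactly this under (H2) via \cite[Chapter IX, Theorem 1.11]{revuz2004continuous} followed by It\^{o}, and handles (H1) by citing \cite[Proposition 2.11]{lavenant-traj-inf}.

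Where you differ is in the justification that the Girsanov exponential is a true (not merely local) martingale, and there you introduce a hypothesis the proposition does not make. You propose to control the negative part of the exponent using $\inf_M \cU > -\infty$, via Novikov/Kazamaki or a dominated-convergence passage; but the proposition as stated assumes only Assumption \ref{assumption:manifold} plus non-explosion, not a lower bound on $\cU$. The paper's route, following \cite[Lemma 2.2.21]{royer-lsi}, avoids this: it first establishes \eqref{eq:rn-path-measure} for the cutoffs $U_n = \chi_n U$ with $\chi_n \in C_c^\infty$ (so that $U_n$ has bounded gradient and the bounded-drift Girsanov theorem applies directly), and then passes to the limit using only the given non-explosion of the diffusion and the Brownian motion --- since both $P_x$ and $W_x$ are honest probability measures on $C([0,T],M)$, the candidate density is automatically uniformly integrable and the local martingale is a true martingale with no need for a Novikov-type bound. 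A second, smaller issue: your truncation $U_n := U \wedge n$ is not $C^2$, so it neither produces a bounded gradient nor makes sense in the It\^{o}-formula step; the smooth cutoff $\chi_n U$ (or the ``exhaustion by compacts'' you mention as an alternative, which is essentially the same thing) is the right object. With those two adjustments --- use the smooth cutoff, and replace the Novikov/Kazamaki reasoning with the passage-to-limit argument that exploits non-explosion directly --- your proof is the paper's proof.
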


The following calculation is essential. Recall the (relative) Fisher information defined in \eqref{eq:intro-fi-defn}. 
\begin{proposition}\label{prop:cal-u-ip-identities}
    Let $P^i$, $i = 1,2$ be the law on $C([0,T],M)$ of the following SDEs, with $U_i \in C^2(M)$ guaranteeing existence, uniqueness, and nonexplosion
    \begin{align}
        dX_t^i = -\frac{1}{2}\nabla_g U_i(X_t^i) dt + dB_t^M, \quad t \geq 0. 
    \end{align}
    Similarly, let $\cU_i$ denote the reciprocal characteristic \eqref{eq:reciprocal-characteristic} corresponding to $U_i$. Let $\mu_2 \in \cP(M)$ and $\mu_{1} \in \cM_{+}(M)$ denote the stationary measures of $(X_t^i, t\geq 0)$, then when $I_g(\mu_2|\mu_1) < +\infty$ and $(\cU_2-\cU_1) \in L^{1}(\mu_2)$ it holds that
    \begin{align*}
        \int_M (\cU_2 - \cU_1)(x) d\mu_2(x) &= -\frac{1}{8}I_g(\mu_2|\mu_1).
    \end{align*}
    Importantly, setting $U_1 = 0$ gives $\mu_1 = \vol$ and thus $\Exp{\mu_2}[\cU_2] = -\frac{1}{8}I_g(\mu_2|\vol)$.
\end{proposition}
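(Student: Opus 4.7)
The plan is a direct computation: expand $\cU_2 - \cU_1$ via its definition, integrate by parts against $\mu_2 = e^{-U_{\mu_2}}\vol$ to eliminate the Laplacian terms in favor of gradient inner products, and recognize the result as a completed square. Writing
\begin{align*}
    \cU_2 - \cU_1 = \tfrac{1}{8}\left(\norm{\nabla_g U_2}_g^2 - \norm{\nabla_g U_1}_g^2\right) - \tfrac{1}{4}\left(\Delta_g U_2 - \Delta_g U_1\right),
\end{align*}
and using $d\mu_2/d\mu_1 = e^{U_1 - U_2}$ so that $\nabla_g \log(d\mu_2/d\mu_1) = \nabla_g(U_1 - U_2)$, the target identity reduces to showing
\begin{align*}
    \int_M (\cU_2 - \cU_1)\, d\mu_2 = -\tfrac{1}{8}\int_M \norm{\nabla_g U_1 - \nabla_g U_2}_g^2\, d\mu_2.
\end{align*}

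The central step is the weighted integration-by-parts identity
\begin{align*}
    \int_M \Delta_g f \, d\mu_2 = \int_M \langle \nabla_g f, \nabla_g U_2 \rangle_g\, d\mu_2, \qquad f \in \{U_1, U_2\},
\end{align*}
which follows from $\nabla_g e^{-U_2} = -e^{-U_2}\nabla_g U_2$ once boundary terms are controlled. Applied with $f = U_2$ this gives $\int \Delta_g U_2\, d\mu_2 = \int \norm{\nabla_g U_2}_g^2\, d\mu_2$, and with $f = U_1$ it gives $\int \Delta_g U_1\, d\mu_2 = \int \langle \nabla_g U_1, \nabla_g U_2\rangle_g\, d\mu_2$. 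Substituting into the expression above and collecting terms, a routine algebraic rearrangement produces exactly $-\tfrac{1}{8}\norm{\nabla_g U_1 - \nabla_g U_2}_g^2$, integrated against $\mu_2$, which is $-\tfrac{1}{8}I_g(\mu_2|\mu_1)$. The special case $U_1 \equiv 0$ yields $\Exp{\mu_2}[\cU_2] = -\tfrac{1}{8}I_g(\mu_2|\vol)$ upon direct substitution.

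The only substantive obstacle is justifying the integration by parts under Assumption \ref{assumption:manifold} (H2), where $M$ is non-compact. Under (H1) compactness makes this immediate. Under (H2), I would work in the global chart against the density $e^{-U_2}\sqrt{\det g}$, introduce a smooth radial cutoff $\chi_R$ supported in a ball of radius $R$, integrate by parts in the usual Euclidean sense, and let $R \to \infty$. The hypotheses $(\cU_2 - \cU_1) \in L^1(\mu_2)$ and $I_g(\mu_2|\mu_1) < +\infty$ (which forces $\nabla_g(U_1 - U_2) \in L^2(\mu_2)$), combined with the bounded-geometry assumptions on $g$, are enough to make the cutoff error terms vanish. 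A more probabilistic alternative is to appeal to the stationarity of $\mu_2$ under the semigroup generated by $\cL_2 = \tfrac{1}{2}\Delta_g - \tfrac{1}{2}\langle \nabla_g U_2, \nabla_g \cdot\rangle_g$ (whose existence and non-explosion are guaranteed by Assumption \ref{assumption:diffusions}): the identity $\int_M \cL_2 f\, d\mu_2 = 0$ applied to $f = U_1, U_2$ recovers the same integration-by-parts formula, provided the test functions lie in a suitable domain, which the hypotheses on $\cU_i$ and the Fisher information ensure.
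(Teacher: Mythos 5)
Your algebraic plan is correct and would recover the identity in finite regularity: expand $\cU_2 - \cU_1$, integrate by parts the two Laplacian terms against $\mu_2$, and complete the square. However, there is a genuine gap in justifying the integration by parts at the generality of the proposition's hypotheses. You propose using
\begin{align*}
    \int_M \Delta_g U_2\, d\mu_2 = \int_M \norm{\nabla_g U_2}_g^2\, d\mu_2, \qquad \int_M \Delta_g U_1 \, d\mu_2 = \int_M \langle \nabla_g U_1, \nabla_g U_2\rangle_g\, d\mu_2,
\end{align*}
but these identities require (at a bare minimum, for both sides to be finite) that $\norm{\nabla_g U_1}_g^2$, $\norm{\nabla_g U_2}_g^2$, $\Delta_g U_1$, $\Delta_g U_2$ be integrable \emph{individually} against $\mu_2$. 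The proposition's only hypotheses are $I_g(\mu_2|\mu_1) = \Exp{\mu_2}\norm{\nabla_g(U_1-U_2)}_g^2 < +\infty$ and $\cU_2 - \cU_1 \in L^1(\mu_2)$; these control the \emph{differences} but do not control $\norm{\nabla_g U_1}_g^2$ or $\norm{\nabla_g U_2}_g^2$ separately (e.g.\ $U_1$ and $U_2$ could both be wild but with $U_1 - U_2$ tame). The cutoff error terms you invoke, of the form $\int \langle \nabla_g \chi_R, \nabla_g U_i\rangle_g\, d\mu_2$, suffer from the same problem: $\nabla_g U_i \in L^2(\mu_2)$ is not available. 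Your probabilistic alternative, applying $\int_M \cL_2 f\, d\mu_2 = 0$ to $f = U_1, U_2$, again needs each $U_i$ to lie in a suitable domain, which the hypotheses do not ensure.

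The paper sidesteps this by never splitting the terms: it observes that the single test function $H = e^{\frac{1}{2}(U_1 - U_2)}$ satisfies $\cL_1 H = H\,(\cU_2 - \cU_1)$, where $\cL_1 = \frac{1}{2}\Delta_g - \frac{1}{2}\langle \nabla_g U_1, \nabla_g \cdot\rangle_g$ is the generator of the \emph{reference} diffusion, and applies one weighted integration-by-parts identity $-\frac{1}{2}\int \langle \nabla_g(\chi_k H), \nabla_g H\rangle_g\, d\mu_1 = \int \chi_k H\, \cL_1 H\, d\mu_1$. After the change of measure $e^{U_1 - U_2} d\mu_1 = d\mu_2$, every integrand that appears involves only $\nabla_g(U_1-U_2)$ or $\cU_2 - \cU_1$, which are exactly the quantities controlled by the hypotheses; the cutoff term is dominated via Cauchy--Schwarz using $\nabla_g(U_1-U_2) \in L^2(\mu_2)$ and $\norm{\nabla_g \chi_k}_\infty \le 1/k$. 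In the special case $U_1 = 0$ that you note, $I_g(\mu_2|\mu_1) = I_g(\mu_2|\vol) < +\infty$ does force $\norm{\nabla_g U_2}_g^2 \in L^1(\mu_2)$, and then your term-by-term argument goes through; but the proposition as stated is strictly more general, and your proof does not cover it without additional integrability assumptions.
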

\begin{proof}
This calculation is a simple consequence of integration by parts identity for diffusions \cite[Section 3.4.3, (F7)]{bgl-markov}. First, let $F: M \to \mathbb{R} $ be such that $F \in C^2(M)$. Then by the Chain Rule for Laplace-Beltrami 
\begin{align*}
    (\Delta_g e^{-F})(x) = e^{-F}(\norm{\nabla_g F}_{g}^2-\Delta_g F).
\end{align*}
Let $\cL_i$ denote the generator of $P^i$, that is, for $F \in C^2(M)$ it holds 
\begin{align*}
    \cL_i F = -\frac{1}{2}\langle \nabla_g U_i, \nabla_g F \rangle_g + \frac{1}{2}\Delta_g F. 
\end{align*}
As $U_1, U_2 \in C^2(M)$, the following identities hold in a classical sense. First, observe from the Laplace-Beltrami identity that
\begin{align*}
    \frac{1}{2}\Delta_g e^{\frac{1}{2}(U_1-U_2)} = e^{\frac{1}{2}(U_1-U_2)}\left(\frac{1}{4}\Delta_g (U_1-U_2) + \frac{1}{8}\norm{\nabla_g U_1-\nabla_g U_2}_{g}^2\right).
\end{align*}
Thus, it holds that
\begin{align*}
    \cL_1 e^{\frac{1}{2}(U_1-U_2)}(x) &= e^{\frac{1}{2}(U_1-U_2)}\left(-\frac{1}{4}\langle \nabla_g U_1,\nabla_g U_1-\nabla_g U_2 \rangle_{g} + \frac{1}{4}\Delta_g(U_1-U_2)+\frac{1}{8}\norm{\nabla_g U_1 - \nabla_g U_2}_{g}^2\right) \\
    &= e^{\frac{1}{2}(U_1-U_2)}\left(\cU_2 - \cU_1\right),
\end{align*}
where the second line follows from expanding the inner product and norm squared terms. 

Let $(\chi_{k}, k \geq 1) \subset C_c^{\infty}(M)$ be a sequence of nonnegative functions increasing pointwise to $1$ such that $\norm{\nabla \chi_k}_{\infty} \leq \frac{1}{k}$ for all $k \geq 1$ (such a family exists because $M$ is complete \cite[Proposition C.4.1]{bgl-markov}). Observe that when $M$ is compact we can simply set $\chi_k \equiv 1$. Set $F_k = \chi_k e^{\frac{1}{2}(U_1-U_2)}$ and $H = e^{\frac{1}{2}(U_1-U_2)}$, as each $f_k$ is compactly supported it follows from \cite[Section 3.4.3, (F7)]{bgl-markov} that
\begin{align}\label{eq:ip-with-cutoff}
    -\frac{1}{2}\int_{M} \langle \nabla_g F_k, \nabla_g H \rangle_{g} d\mu_1 &= \int_{M} F_k \cL_1 H d\mu_1.  
\end{align}
Let's send $k\to +\infty$ and analyze the left and right hand sides separately. First, for the left hand side observe that 
\begin{align*}
    \langle \nabla_g F_k, \nabla_g H \rangle &= \chi_{k} \norm{\nabla_g H}_g^2 + \langle \nabla_g \chi_{k}, \nabla_g \log H \rangle_g H^2 \\
    &= \left(\chi_{k} \frac{1}{4}\norm{\nabla_g U_1 - \nabla_g U_2}_g^2 + \frac{1}{2} \langle \nabla_g \chi_k,\nabla_g U_1 - \nabla_g U_2\rangle_g \right)e^{U_1-U_2}.  
\end{align*}
It thus holds that
\begin{align*}
    \int_{M} \langle \nabla_g F_k, \nabla_g H \rangle_{g} d\mu_1 &= \frac{1}{4}\int_{M} \chi_{k}\norm{\nabla_g (U_1 - U_2)}_g^2e^{U_1-U_2}e^{-U_1}d\vol \\
    &+ \frac{1}{2}\int_{M}  \langle \nabla_g \chi_k,\nabla_g (U_1 - U_2)\rangle_{g} e^{U_1-U_2}e^{-U_1}d\vol \\
    &= \frac{1}{4}\int_{M} \chi_{k} \norm{\nabla_g U_1 - \nabla_g U_2}_{g}^2d\mu_{2} + \frac{1}{2} \int_{M}  \langle \nabla_g \chi_k,\nabla_g U_1 - \nabla_g U_2\rangle_g d\mu_2. 
\end{align*}
Now, send $k \to +\infty$. By the Monotone Convergence Theorem on the left term and the Dominated Convergence Theorem on the right (using that $I(\mu_2|\mu_1) < +\infty$ and that the uniform bound on gradients of the $\chi_k$ are vanishing), it holds that
\begin{align*}
    \lim\limits_{k \to \infty} \frac{1}{2}\int_{M}\langle \nabla_g F_k, \nabla_g H \rangle_{g} d\mu_1 &= \frac{1}{8}\int_{M} \norm{\nabla_g U_1 - \nabla_g U_2}_{g}^2d\mu_{2} = \frac{1}{8}I(\mu_2|\mu_1). 
\end{align*}
Consider now the right hand side of \eqref{eq:ip-with-cutoff}. From the previous computation with the $\cL_1$, observe that
\begin{align*}
    F_{k}\cL_1 H &= \chi_k e^{\frac{1}{2}(U_1-U_2)}\cdot e^{\frac{1}{2}(U_1-U_2)}\left(\cU_2 - \cU_1\right) = \chi_k (\cU_2-\cU_1) e^{U_1 - U_2},
\end{align*}
and thus
\begin{align*}
    \int_M F_k \cL_1 H d\mu_1 &= \int_M \chi_k (\cU_2-\cU_1)e^{U_1-U_2}e^{-U_1}d\vol = \int_{M} \chi_k (\cU_2-\cU_1)d\mu_2. 
\end{align*}
Sending $k \to +\infty$, the desired identity follows from the Dominated Convergence Theorem. 
\end{proof}

\subsection{Optimal Transport, Entropic Regularization, and the Role of Curvature}
Let $(M,g)$ satisfy Assumption \ref{assumption:manifold}. For a general treatise of optimal transport and Otto calculus on $\cP_2(M)$, we refer readers to \cite{gigli-ot-manifold} and \cite[Part II]{villani2009optimal}. 

\subsubsection{Dynamic Optimal Transport}
Following \cite[Theorem 1.28]{gigli-ot-manifold}, a curve of measures $(\mu_t, t \in [0,T]) \subset \cP_2(M)$ is called \textbf{absolutely continuous} if and only if there exists a family of vector fields $(v_t, t \in [0,T])$ with $v_t \in L^2(\mu_t)$ and $\norm{v_t}_{L^2(\mu_t)} \in L^1(0,T)$ such that the continuity equation is satisfied in the weak sense
\begin{align}\label{eq:continuity-equation}
    \frac{d}{dt} \mu_t + \Div (\mu_t v_t) = 0.
\end{align}
To be precise, this means that for all $f \in C_c^{\infty}(M)$ it holds
\begin{align}
    \frac{d}{dt}\int_M f d\mu_t &= \int_M \langle \nabla_g f,v_t\rangle_g d\mu_t. 
\end{align}

An important family of curves we will consider are called \textbf{McCann interpolations} \cite{mccann-interp97}, which provide a natural linear interpolation between two measures based on the optimal transport map between them. For instance, if $\mu \in \cP(M)$ then the McCann interpolation from $\mu$ to itself is given by the constant curve $(\mu_{t} \equiv \mu, v_t \equiv 0)$.

\subsubsection{$\Schro$ Bridge}
Let $(X_t, t \geq 0)$ denote the reference process, that is, the stationary solution of the manifold-valued SDE on $M$ 
\begin{align}
    dX_t = -\frac{1}{2}\nabla_g U_{\diffref}(X_t) dt + dB_t^{M}, \quad X_0 \sim \diffref. 
\end{align}
For each $\eps > 0$, let $R^{\eps}$ denote the law on $C([0,1],M)$ of the process $(X_{\eps t}, t \in [0,1])$. In particular, note that $(\omega_t)_{\#}R^{\eps} = \diffref$ for all $t \in [0,1]$. Define $R_{0\eps}:= (\omega_0,\omega_1)_{\#}R^{\eps}$, and set
\begin{align}
    r_{\eps}(x,z) := \frac{dR_{0\eps}}{d(\diffref\otimes \diffref)}(x,z).
\end{align}
Fix $\mu,\nu \in \cP_2(M)$ with $H(\mu|\diffref), H(\nu|\diffref)$ finite. The $\eps$-dynamic $\Schro$ bridge from $\mu$ to $\nu$ is defined as
\begin{align}\label{eq:dynam-schro-bridge-defn}
    P^{\eps} &:= \arg\min \left\{H(P|R^{\eps}): P \in \cP(C([0,1],M)): (\omega_0)_{\#}P=\mu, (\omega_1)_{\#}P = \nu\right\}.
\end{align}
Similarly, the $\eps$-static $\Schro$ bridge from $\mu$ to $\nu$ is defined as
\begin{align}\label{eq:static-schro-bridge-defn}
    \pi^{\eps} &:= \argmin\limits_{\pi \in \Pi(\mu,\nu)} H(\pi|R_{0\eps}).
\end{align}
The quantity $H(\pi^{\eps}|R_{0\eps})$ is called the \textbf{entropic cost}. 
Let $(R^{\eps}_{xy}, x,y \in M)$ denote the laws of the bridges of $R^{\eps}$ on $C([0,1],M)$. Then the static and dynamic $\Schro$ bridges are related in the following manner
\begin{align}\label{eq:stat-dynam-bridge-mix}
    P^{\eps} &= \int_{M \times M} R^{\eps}_{xy}R_{0\eps}(dxdy). 
\end{align}
Assumptions \ref{assumption:manifold} and \ref{assumption:diffusions} on $(M,g)$ and the reference measure $\diffref$ are contained in the assumptions of \cite{chiarini2022gradient}. Hence, by \cite[Proposition 2.2]{chiarini2022gradient}, the $\Schro$ bridge defined in \eqref{eq:dynam-schro-bridge-defn} and \eqref{eq:static-schro-bridge-defn} exists, is unique, and possesses an $(f,g)$-decomposition. That is, there exists $f,g: M \to [0,+\infty)$ measurable such that 
\begin{align}
    \frac{dP^{\eps}}{dR^{\eps}}(\omega) = f(\omega_0)g(\omega_1) \text{ and } \frac{d\pi^{\eps}}{dR_{0\eps}}(x,z) = f(x)g(z).  
\end{align}
Consider the curve in $P_2(M)$ dictated by the marginal flow of the dynamic $\Schro$ bridge, $((\omega_t)_{\#}P^{\eps}, t \in [0,1]) \subset \cP_2(M)$. This curve is in fact an absolutely continuous curve in $\cP_2(M)$ called the $\eps$-\textbf{entropic interpolation} from $\mu$ to $\nu$. This curve is a solution to the following entropic analogue of the celebrated Benamou-Brenier dynamic formulation of optimal transport \cite{benamou2000computational}. Fix $\eps > 0$ and define the following action functional on absolutely continuous curves $(\mu_t, v_t, t \in [0,1])$
\begin{align}\label{eq:action-functional}
    A_{\eps}((\mu_t,v_t)) = \frac{1}{2}\int_0^{1} \norm{v_t}_{L^{2}(\mu_t)}^2dt + \frac{\eps^2}{8}\int_0^{1} I_g(\mu_t|\diffref) dt. 
\end{align}
\begin{proposition}[Entropic Benamou-Brenier]\label{prop:entropic-bb}
    Let $(M,g)$ satisfy Assumption \ref{assumption:manifold} and $\diffref$ satisfy Assumption \ref{assumption:diffusions}. Let $\mu, \nu \in \cP_2(M)$ with $H(\mu|\diffref)$, $H(\nu|\diffref)$ finite (so that $P^{\eps}$ exist). Then the $\eps$-entropic interpolation from $\mu$ to $\nu$ minimizes $A_{\eps}$ defined in \eqref{eq:action-functional} over a class of absolutely continuous curves from $\mu$ to $\nu$. Let $(\mu_t^{\eps},v_t^{\eps})$ denote the entropic interpolation, then the entropic cost can be written as
    \begin{align}\label{eq:ent-cost-ent-interp}
        \eps H(\pi^{\eps}|R_{0\eps}) &= \frac{\eps}{2}(\Ent(\mu|\diffref)+\Ent(\nu|\diffref))+\frac{1}{2}\int_0^{1} \norm{v_t^{\eps}}_{L^{2}(\mu_t^{\eps})}^2dt + \frac{\eps^2}{8}\int_0^{1} I_g(\mu_t^{\eps}|\diffref)dt. 
    \end{align}
\end{proposition}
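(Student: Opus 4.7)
The plan is to reduce the dynamic claim to a computation with the forward drift of the $\Schro$ bridge and then convert to the current velocity via an entropy dissipation identity. First, by the mixture representation \eqref{eq:stat-dynam-bridge-mix} and the chain rule for relative entropy, $H(P^{\eps}|R^{\eps}) = H(\pi^{\eps}|R_{0\eps})$, so it suffices to establish the path-space formula. The $(f,g)$-decomposition of $P^{\eps}$ from \cite[Proposition 2.2]{chiarini2022gradient} gives a forward Doob $h$-transform representation: $P^{\eps}$ is the law of the diffusion
\[
dX_t = \left(-\tfrac{\eps}{2}\nabla_g U_{\diffref}(X_t) + \eps \nabla_g \log \phi_t(X_t)\right) dt + \sqrt{\eps}\, dB_t^M, \quad X_0 \sim \mu,
\]
where $\phi_t(x) = \Exp{R^{\eps}}[g(\omega_1)|\omega_t = x]$. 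Applying the manifold Girsanov theorem to $P^{\eps}$ against $R^{\eps}$ (both with diffusion coefficient $\sqrt{\eps}$, differing only in drift by $u_t := \eps \nabla_g \log \phi_t$) yields
\[
H(P^{\eps}|R^{\eps}) = H(\mu|\diffref) + \tfrac{1}{2\eps}\int_0^1 \|u_t\|^2_{L^2(\mu_t^{\eps})}\, dt.
\]

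Next I would relate $u_t$ to the current velocity $v_t^{\eps}$. Writing $\eta_t^{\eps} := d\mu_t^{\eps}/d\diffref = \phi_t \hat{\phi}_t$ (with $\hat{\phi}_t$ the dual $h$-process), the symmetry of the $(f,g)$-decomposition gives the identity $u_t = v_t^{\eps} + \tfrac{\eps}{2}\nabla_g \log \eta_t^{\eps}$, and hence
\[
\|u_t\|^2_{L^2(\mu_t^{\eps})} = \|v_t^{\eps}\|^2_{L^2(\mu_t^{\eps})} + \eps \Exp{\mu_t^{\eps}}[\langle v_t^{\eps}, \nabla_g \log \eta_t^{\eps}\rangle_g] + \tfrac{\eps^2}{4} I_g(\mu_t^{\eps}|\diffref).
\]
The cross term integrates to a boundary contribution: pairing the continuity equation \eqref{eq:continuity-equation} for $(\mu_t^{\eps}, v_t^{\eps})$ with $\log \eta_t^{\eps}$ and integrating by parts gives $\tfrac{d}{dt}H(\mu_t^{\eps}|\diffref) = \Exp{\mu_t^{\eps}}[\langle v_t^{\eps}, \nabla_g \log \eta_t^{\eps}\rangle_g]$, so integration in $t$ produces $H(\nu|\diffref) - H(\mu|\diffref)$. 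Substituting and multiplying by $\eps$ yields the stated formula \eqref{eq:ent-cost-ent-interp}.

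For the minimization claim, given any absolutely continuous curve $(\mu_t, v_t)$ from $\mu$ to $\nu$ with finite $A_{\eps}$ and finite marginal entropies, I would construct a candidate path measure $Q$ as the law of the SDE on $M$ with initial distribution $\mu$ and forward drift $v_t + \tfrac{\eps}{2}\nabla_g \log(d\mu_t/d\diffref)$, obtained via a superposition principle so that its marginal flow coincides with $(\mu_t)$. Since by construction $(\omega_0,\omega_1)_{\#}Q \in \Pi(\mu,\nu)$, repeating the Girsanov computation gives
\[
\eps H(Q|R^{\eps}) = \tfrac{\eps}{2}(H(\mu|\diffref)+H(\nu|\diffref)) + \tfrac{1}{2}\int_0^1 \|v_t\|^2_{L^2(\mu_t)}\, dt + \tfrac{\eps^2}{8}\int_0^1 I_g(\mu_t|\diffref)\, dt.
\]
The minimality of $P^{\eps}$ among path measures with marginals $\mu, \nu$ then forces $A_{\eps}((\mu_t^{\eps}, v_t^{\eps})) \leq A_{\eps}((\mu_t, v_t))$.

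The main obstacle is analytic: justifying Girsanov on a noncompact manifold requires showing that $u_t$ is in $L^2(\mu_t^{\eps})$ with enough uniformity in $t$ for a Novikov-type condition to hold, and the integration by parts underlying the entropy dissipation identity demands sufficient regularity and decay of $\eta_t^{\eps}$ to approximate by compactly supported test functions. Under Assumption \ref{assumption:manifold} and the curvature-dimension hypothesis of Assumption \ref{assumption:diffusions}, together with the assumed finiteness of $H(\mu|\diffref)$ and $I_g(\mu|\diffref)$, these can be handled by the regularizing properties of the heat semigroup combined with the cutoff construction $(\chi_k)$ with vanishing gradients used in the proof of Proposition \ref{prop:cal-u-ip-identities}.
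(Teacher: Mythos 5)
Your proof is correct and follows essentially the same route as the paper, which handles (H1) by citing Gigli--Tamanini and handles (H2) by the Girsanov decomposition from Gentil--L\'eonard--Ripani (identify the $h$-transform forward drift, split off the osmotic term $\tfrac{\eps}{2}\nabla_g\log\eta_t^\eps$ to expose the current velocity, integrate the cross term as an entropy derivative, and use the superposition/Girsanov correspondence for the minimality claim). The paper defers these steps to the cited references, whereas you write them out explicitly; both treat the cutoff/regularity issues at the same level of detail.
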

We provide more detail in the Appendix, but broadly the justification for Proposition \ref{prop:entropic-bb} is the following. Under Assumption \ref{assumption:manifold} (H1), this is essentially established in \cite[Theorem 4.1]{gigli-bb-ent-rcd}. In Euclidean space, this is \cite[Corollary 5.8]{gentil2017analogy}. To our knowledge, this has not been previously established in the full generality of Assumption \ref{assumption:manifold}, but this can be achieved with benign modifications to the proofs in \cite[Section 5]{gentil2017analogy}. We outline the necessary modifications in the Appendix. 

We conclude with the following proposition summarizing important remarks about the Fisher information along the entropic interpolation when $\eps$ varies.
\begin{proposition}\label{prop:the-one-about-fi}
    Let $(\mu_{t}^{\eps}, t \in [0,1])$ denote the $\eps$-entropic interpolation from $\mu$ to $\mu$. Recall the setting in Assumptions \ref{assumption:manifold} and \ref{assumption:diffusions}. Under (H1) it holds that
    \begin{align}\label{eq:cont-int-fisher}
        \lim\limits_{\eps \downarrow 0} \int_{0}^{1} I_g(\mu_{t}^{\eps}|\diffref) dt = I_g(\mu|\diffref), 
    \end{align}
    and if in addition $U_{\diffref} \in C^{\infty}(M)$ then
    \begin{align}\label{eq:int-ent-inter-min}
        I_g(\mu_{1/2}^{\eps}|\diffref) &= \inf\limits_{t \in [0,1]} I_g(\mu_t^{\eps}|\diffref).
    \end{align}
    Under (H2), \eqref{eq:cont-int-fisher} holds under the additional assumption that either (i) $\diffref(M) = 1$ or (ii) $U_{\diffref}: M \to \mathbb{R}$ is bounded.
\end{proposition}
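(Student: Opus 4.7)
For \eqref{eq:cont-int-fisher}, the upper bound follows directly from Proposition \ref{prop:entropic-bb}. Since both marginals equal $\mu$, the constant curve $(\mu_t \equiv \mu, v_t \equiv 0)$ is an admissible competitor for the action $A_\eps$, so minimality of the entropic interpolation gives
\[
\frac{1}{2}\int_0^1 \norm{v_t^{\eps}}_{L^2(\mu_t^\eps)}^2 dt + \frac{\eps^2}{8}\int_0^1 I_g(\mu_t^{\eps}|\diffref)\, dt \;\leq\; A_\eps(\mu,0) \;=\; \frac{\eps^2}{8} I_g(\mu|\diffref).
\]
Non-negativity of each term yields both $\int_0^1 I_g(\mu_t^\eps|\diffref)\, dt \leq I_g(\mu|\diffref)$ (the desired upper bound) and $\int_0^1 \norm{v_t^\eps}_{L^2(\mu_t^\eps)}^2\, dt = O(\eps^2)$. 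The latter, combined with the standard $W_2$-length bound for absolutely continuous curves and Cauchy--Schwarz, gives $\sup_{t \in [0,1]} W_2(\mu_t^\eps, \mu) = O(\eps)$.

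For the matching lower bound I would invoke lower semicontinuity of $\nu \mapsto I_g(\nu|\diffref)$ under $W_2$ convergence, combined with Fatou's lemma:
\[
I_g(\mu|\diffref) \;=\; \int_0^1 I_g(\mu|\diffref)\, dt \;\leq\; \int_0^1 \liminf_{\eps \downarrow 0} I_g(\mu_t^\eps|\diffref)\, dt \;\leq\; \liminf_{\eps \downarrow 0} \int_0^1 I_g(\mu_t^\eps|\diffref)\, dt.
\]
Under (H1) the LSC is classical: on a compact manifold with smooth reference density, $I_g(\cdot|\diffref)$ is, up to a factor of four, the squared Dirichlet form of the square-root density, which is lower semicontinuous under narrow convergence. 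Under (H2) the additional hypotheses enter precisely to reduce to the standard Euclidean LSC of Fisher information against a probability measure: if $\diffref \in \cP(M)$ we are immediately in this setting, while if $U_\diffref$ is bounded one rewrites $I_g(\mu|\diffref)$ in terms of $I_g(\mu|\vol)$ plus cross-terms involving $\nabla_g U_\diffref \in L^2(\mu)$ and controls each piece separately.

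For \eqref{eq:int-ent-inter-min} the argument combines symmetry with convexity. Reversibility of the reference diffusion with respect to $\diffref$ makes $R^\eps$ invariant under time reversal; together with uniqueness of the entropic interpolation between $\mu$ and itself, this forces $\mu_t^\eps = \mu_{1-t}^\eps$. Under (H1) and $U_\diffref \in C^\infty(M)$, the Schr\"odinger potentials arising from the $(f,g)$-decomposition of $\pi^\eps$ are smooth, and a Bochner/$\Gamma_2$ computation along the Hamilton--Jacobi--Bellman equations they satisfy shows $t \mapsto I_g(\mu_t^\eps|\diffref)$ is convex on $[0,1]$; the curvature-dimension bound $\mathrm{CD}(\kappa_\diffref, N)$ is what controls the relevant second-derivative expression. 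A symmetric convex function on $[0,1]$ attains its minimum at $t = 1/2$.

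The main obstacle is the convexity step: it requires rigorous second-derivative formulas for Fisher information along entropic interpolations, hence enough regularity to differentiate the HJB equations for the potentials twice and integrate by parts without boundary contributions. This is precisely why compactness (H1) and $C^\infty$ smoothness of $U_\diffref$ are imposed for \eqref{eq:int-ent-inter-min}, whereas the softer variational argument underlying \eqref{eq:cont-int-fisher} survives under the weaker (H2) hypotheses together with either $\diffref(M)=1$ or $U_\diffref$ bounded.
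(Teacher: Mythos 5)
Your argument for \eqref{eq:cont-int-fisher} follows the paper's strategy: upper bound from the constant-curve competitor in the entropic Benamou--Brenier formula, lower bound from lower semicontinuity of Fisher information plus Fatou. One genuine improvement is your route to weak/$\Was{2}$ convergence of $\mu_t^\eps$: bounding $\Was{2}(\mu, \mu_t^\eps)$ by the metric-derivative length $\int_0^t \norm{v_s^\eps}_{L^2(\mu_s^\eps)}\,ds = O(\eps)$ via Cauchy--Schwarz is cleaner and more unified than what the paper does, which is weak convergence via the large-deviation rate function of $R^\eps$ (Proposition \ref{prop:ldp-rate-diff}) under both (H1) and (H2), and then a separate $\Was{2}$-convergence argument under (H2)(ii) using Brownian-bridge moment bounds (Proposition \ref{prop:moment-bdd-brownian-bridge}). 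Under (H2) with $\diffref \notin \cP(M)$, however, the paper invokes an LSC result that requires not just $\Was{2}$ convergence but also that $(\mu_t^\eps)$ stays in a sublevel set of $H(\cdot|\diffref)$; the paper verifies this explicitly (bounding $H(\mu_t^\eps|\diffref)$ by $H(\pi^\eps|R_{0\eps})$ and then by $H(\mu|\diffref) + \frac{\eps}{8}I_g(\mu|\diffref)$). Your sketch of the (H2)(ii) case as "rewrite in terms of $I_g(\mu|\vol)$ plus cross-terms and control each piece" is not a complete argument---the cross term is neither lower nor upper semicontinuous by itself---so some version of the paper's level-set verification (or an explicit duality argument for LSC) is needed there.

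For \eqref{eq:int-ent-inter-min}, your symmetry argument $\mu_t^\eps = \mu_{1-t}^\eps$ from reversibility of $R^\eps$ is correct, but the convexity step overclaims. You assert the $\mathrm{CD}(\kappa_\diffref, N)$ bound alone controls the second derivative of $t \mapsto I_g(\mu_t^\eps|\diffref)$ to give convexity; the paper's remark immediately after the proposition, however, states that establishing convexity of this map requires \emph{strict convexity of $\cU_\diffref$} plus further conditions (citing \cite{conforti-second-order-sb19}), which are not among the hypotheses of the proposition. The paper instead derives \eqref{eq:int-ent-inter-min} by citing \cite[Lemma 3.2]{glrt-hwi-20}, which rests on the decomposition of $I_g(\mu_t^\eps|\diffref)$ into forward and backward Fisher informations together with their respective monotonicities in $t$ (a de Bruijn-type identity under the CD condition); combined with the symmetry $\mu_t^\eps = \mu_{1-t}^\eps$ this yields the minimum at $t=1/2$ without needing global convexity. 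So the correct route for \eqref{eq:int-ent-inter-min} under the stated hypotheses is monotonicity of the forward/backward pieces, not convexity of the symmetrized Fisher information.
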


\begin{remark}
    On $(\mathbb{R}^{d},\Id,\diffref)$ with $\cU_{\diffref}$ strictly convex, \cite[Theorem 1.5]{conforti-second-order-sb19} details further conditions that establish the convexity of $t \mapsto I(\mu_{t}^{\eps}|\diffref)$ and gives a lower bound for its second derivative in $t$, providing a basis to quantify the rate of convergence in \eqref{eq:cont-int-fisher}. 
\end{remark}

\begin{proof}
The continuity at $\eps = 0$ of the integrated Fisher information follows from a two-step argument. First, from Proposition \ref{prop:entropic-bb} and suboptimality of the constant curve $(\mu_t \equiv \mu, v_t \equiv 0, t \in [0,1]) \subset \cP_2(M)$ it holds 
\begin{align}\label{eq:int-fi-limsup}
    \frac{1}{2}\int_{0}^{1}\norm{v_t^{\eps}}_{L^{2}(\mu_t^{\eps})}^2dt + \frac{\eps^2}{8}\int_{0}^{1} I_g(\mu_{t}^{\eps}|\diffref)dt \leq \frac{\eps^2}{8} I_g(\mu|\diffref) \Rightarrow \limsup\limits_{\eps \downarrow 0} \int_0^{1} I_g(\mu_{t}^{\eps}|\diffref)dt \leq I_g(\mu|\diffref). 
\end{align}
To supply the matching lower bound, it suffices to establish lower semicontinuity of the Fisher information $I(\cdot|\diffref)$ with respect to weak convergence in $\cP(M)$ and then apply Fatou's lemma. Under (H1), the argument presented in the proof of \cite[Theorem 1.6]{conforti21deriv} establishes the desired lsc. When $\diffref(M) = 1$ (under either (H1) or (H2)) Fisher information is lsc with respect to weak convergence on level sets of $H(\cdot|\diffref)$ in $\cP(M)$ from \cite[Corollary 13]{gigli-mms-heat-flow-09}. Under (H2) with $\diffref \notin \cP(M)$, from \cite[Proposition 9.7]{ags-calc-heat-flow} lsc holds along $\Was{2}$ convergent sequences in $\cP_2(M)$ contained in a level set of $H(\cdot|\diffref)$. 

Regarding weak convergence, let $(R_x^{\eps}, \eps > 0)$ denote the laws on $C([0,1],M)$ of the reference diffusion started from $x \in M$, and recall the large deviation rate function for this family given by Proposition \ref{prop:ldp-rate-diff}. As $M$ is complete, by the Hopf-Rinow Theorem $\frac{1}{2}d^2(x,\cdot)$ has compact level sets for all $x \in M$ and thus is coercive, so by \cite[Proposition 3.7(3)]{leo-sb-to-kp12} it holds that the dynamic $\Schro$ bridge from $\mu$ to itself converges weakly to the dynamic OT from $\mu$ to itself with cost function $\frac{1}{2}d^2$. By the continuous mapping theorem, $\mu_t^{\eps}$ converges weakly to $\mu$ as $\eps \downarrow 0$. 

Next, we show in the case of (H2) with $U_{\diffref}$ bounded and $\cU_{\diffref}$ lower bounded that for each $t \in (0,1)$ it holds that $(\mu_{t}^{\eps}, \eps > 0)$ converges in $\Was{2}$ to $\mu$ \cite[Proposition 9.7, Section 7]{ags-calc-heat-flow}. Fix some $o \in M$. By the weak convergence shown above, from the Portmanteau lemma
\begin{align*}
    \int_{M} d^2(x,o) \mu(dx) \leq \liminf\limits_{\eps \downarrow 0} \int_M d^2(x,o) \mu_t^{\eps}(dx). 
\end{align*}
To establish the matching upper bound, fix $\delta > 0$. There exists $C_\delta > 0$ such that
\begin{align*}
    \int_M d^2(x,o) \mu_t^{\eps}(dx) &= \int_{M \times M} \Exp{R_{u,v}^{\eps}}[d^2(\omega_t,o)] \pi^{\eps}(dudv) \\
    &\leq (1+\delta)\int_M d^2(x,o)\mu(dx) + C_{\delta}\int_{M \times M} \Exp{R_{u,v}^{\eps}}[d^2(\omega_t,u)] \pi^{\eps}(dudv).
\end{align*}
We now show that the rightmost integral vanishes as $\eps \downarrow 0$. Let $(W_{u,v}^{\eps}, \eps > 0,u,v \in M) \subset \cP(C([0,1],M))$ denote the laws of the $\eps$-Brownian bridges on $M$. Proposition \ref{prop:moment-bdd-brownian-bridge} establishes that
\begin{align}\label{eq:brownian-bridge-second-moment}
    \Exp{W_{u,v}^{\eps}}[d^2(\omega_t,u)] &\leq C\left(d^2(u,v)+\eps\right).
\end{align}
Now, by Proposition \ref{prop:rn-deriv-diffusion} and the assumption on $U_{\diffref}$ and $\cU_{\diffref}$, there is some $K > 0$ such that for all $\eps < 1$, $R_{u,v}^{\eps} \leq K W_{u,v}^{\eps}$. With \eqref{eq:brownian-bridge-second-moment} then,
\begin{align*}
    \int_{M \times M} \Exp{R_{u,v}^{\eps}}[d^2(\omega_t,u)] \pi^{\eps}(dudv) \leq CK \int_{M \times M} \left(d^2(u,v)+\eps\right)\pi^{\eps}(dudv).
\end{align*}
As $\pi^{\eps}$ converges in $\Was{2}$ to $(\Id,\Id)_{\#}\mu$ (as $\mu \in \cP_2(M)$), this integral vanishes as $\eps \downarrow 0$, giving
\begin{align*}
    \limsup\limits_{\eps \downarrow 0} \int_M d^2(x,o) \mu_t^{\eps}(dx) \leq (1+\delta) \int_{M} d^2(x,o)\mu(dx). 
\end{align*}
As $\delta > 0$ is arbitrary, this provides the matching upper bound and establishes the $\Was{2}$ convergence of $(\mu_t^{\eps}, \eps > 0)$ to $\mu$ as $\eps \downarrow 0$.

To conclude the lsc of Fisher information argument, we must show that $(\mu_t^{\eps}, \eps > 0)$ is in a level set of $H(\cdot|\diffref)$. By \cite[Theorem 2.4]{leonard2014some}, it holds that
\begin{align*}
    H(P^{\eps}|R^{\eps}) = H((\omega_t)_{\#}P^{\eps}|(\omega_t)_{\#}R^{\eps}) + \Exp{\mu_{t}^{\eps}}\left[H(P^{\eps}(\cdot|\omega_t)|R^{\eps}(\cdot|\omega_t))\right].
\end{align*}
As $P^{\eps}(\cdot|\omega_t), R^{\eps}(\cdot|\omega_t) \in \cP(C([0,1],M))$ (note that this holds even when $R^{\eps}$ is not a finite measure, see \cite[Remark 2.5(b)]{leonard2014some}), the second quantity on the right hand side is nonnegative. Thus, it holds that
\begin{align*}
    H(\mu_t^{\eps}|\diffref) = H((\omega_t)_{\#}P^{\eps}|(\omega_t)_{\#}R^{\eps}) \leq H(P^{\eps}|R^{\eps}) = H(\pi^{\eps}|R_{0\eps}). 
\end{align*}
Define $\ell_{\eps} := \mathrm{Law}(X_0,X_{\eps})$, where $(X_t, t \geq 0)$ is the stationary solution to the manifold diffusion \eqref{eq:reference-process-intro} with drift equal to $-\frac{1}{2}\nabla_g U_{\mu}$. Observe that $\ell_{\eps} \in \Pi(\mu,\mu)$. By the optimality of $\pi^{\eps}$ among all couplings of $\mu$ with itself, it holds from the future computation in \eqref{eq:static-ell-eps-mu-reps} that
\begin{align*}
    H(\pi^{\eps}|R_{0\eps}) \leq H(\ell_{\eps}|R_{0\eps}) = H(\mu|\diffref) + H(\ell_{\eps}|\mu R_{0\eps}) \leq H(\mu|\diffref) + \frac{\eps}{8}I_g(\mu|\diffref).
\end{align*}
Thus, for each fixed $t \in [0,1]$ it holds that $(H(\mu_t^{\eps}|\diffref), \eps \in (0,\eps_0))$ is bounded above by a finite constant for all $\eps_0 > 0$. 

Under (H1) with $U_{\diffref} \in C^{\infty}(M)$, \eqref{eq:int-ent-inter-min} follows from \cite[Lemma 3.2]{glrt-hwi-20} and the remark after \cite[Theorem 1]{AHMP25} on integrated Fisher information.
\end{proof}

\section{Diffusion Approximation to Same Marginal $\Schro$ Bridge}\label{sec:diff-approx}
In this Section we prove Theorem \ref{thm:sym-rel-ent}. Fix the following notation. Fix $\mu \in \cP(M)$ and $\diffref \in \cM_{+}(M)$ that are stationary measures of the following manifold-valued diffusion processes
\begin{align}\label{eq:sec3-target-diff}
    dX_t &= \frac{1}{2}\nabla_g \log\left( \frac{d\mu}{d\vol}\right)(X_t)dt + dB_t^M, \quad X_0 \sim \mu,
\end{align}
\begin{align}\label{eq:sec3-ref-diff}
    dY_t &= \frac{1}{2}\nabla_g \log \left(\frac{d\diffref}{d\vol}\right)(Y_t)dt+dB_t^M,\quad Y_0 \sim \diffref.
\end{align}

Fix some $\eps > 0$, let $Q,R,W \in \cM_{+}(C([0,\eps],M))$ denote the laws of $(X_t, t \in [0,\eps])$, $(Y_t, t \in [0,\eps])$, and $(B_t^M, t \in [0,\eps])$. We will use $Q_x, R_x, W_x \in \cP(C([0,\eps],M))$ to refer to the laws of the above processes when started from a point $x \in M$. Lastly, let $\ell_{\eps} = \text{Law}(X_0,X_{\eps})$ and $R_{0\eps} = \text{Law}(Y_0,Y_{\eps})$. Observe that $\ell_{\eps} \in \cP(M\times M)$ and is a coupling of $\mu$ with itself. The marginals of $R_{0\eps}$ are both equal to $\diffref$. Lastly, we define for convenience,
\begin{align}
    U_\mu :=  -\log\left(\frac{d\mu}{d\vol}\right), \text{ } U_\diffref := -\log\left( \frac{d\diffref}{d\vol}\right)
\end{align}
as well as
\begin{align}
    \cU_{\mu} := \frac{1}{8}\norm{\nabla_g U_{\mu}}_g^2 - \frac{1}{4}\Delta_g U_{\mu}, \text{ } \cU_{\diffref} := \frac{1}{8}\norm{\nabla_g U_{\diffref}}_g^2 - \frac{1}{4}\Delta_g U_{\diffref}.
\end{align}

\begin{theorem}\label{thm:sym-rel-ent}
Let $(M,g)$ be a Riemannian manifold that satisfies Assumption \ref{assumption:manifold}. Fix $\mu \in \cP_2(M)$ and $\diffref \in \cM_{+}(M)$ satisfying Assumption \ref{assumption:diffusions}. Let $\pi^{\eps}$ denote the $\eps$-static $\Schro$ bridge from $\mu$ to itself with reference process $R_{0\eps} =\text{Law}(Y_0,Y_{\eps})$, as defined in \eqref{eq:sec3-ref-diff}. Similarly, let $\ell_{\eps} = \text{Law}(X_0,X_{\eps})$ as defined in \eqref{eq:sec3-target-diff}.

Let $\eps > 0$ and suppose that, with $(\mu_t^{\eps}, t \in [0,1])$ denoting the entropic interpolation, 
\begin{equation}\label{eq:cu-integrability}\tag{$\cU$-int}
    C_{\eps} := \int_0^1 \Exp{\mu_t^{\eps}}\norm{\nabla_g (\cU_{\mu}-\cU_{\diffref})}^2_g dt < +\infty,
\end{equation}
then it holds that
    \begin{align}\label{eq:sym-rel-ent}
        H(\pi^{\eps}|\ell_{\eps})+H(\ell_{\eps}|\pi^{\eps}) &\leq \frac{1}{2}\eps^2 C_{\eps}^{1/2}\left(I_{g}(\mu|\diffref)-\int_{0}^{1} I_{g}(\mu_{t}^{\eps}|\diffref)dt\right)^{1/2}.
    \end{align}
In particular, when $\limsup\limits_{\eps \downarrow 0} C_{\eps} < +\infty$ and $\lim\limits_{\eps \downarrow 0} \int_0^{1} I(\mu_t^{\eps}|\diffref)dt = I(\mu|\diffref)$,
\begin{align}\label{eq:o-eps-2-limit}
    \lim\limits_{\eps \downarrow 0} \frac{1}{\eps^2}\left(H(\pi^{\eps}|\ell_{\eps})+H(\ell_{\eps}|\pi^{\eps})\right) = 0.
\end{align}
\end{theorem}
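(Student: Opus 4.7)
The plan is to lift the estimate to path space where the relative entropies become computable via the Girsanov-type formula of Proposition \ref{prop:rn-deriv-diffusion} and the product structure of the $\Schro$ bridge. Let $P^{\eps}$ denote the dynamic $\Schro$ bridge on $C([0,1],M)$ from $\mu$ to itself with reference $R^{\eps}$, and let $Q^{\eps}$ denote the law on $C([0,1],M)$ of the time-rescaled target diffusion $(X_{\eps t}, t\in[0,1])$ with $X_0 \sim \mu$, so that $(\omega_0,\omega_1)_{\#}P^{\eps}=\pi^{\eps}$ and $(\omega_0,\omega_1)_{\#}Q^{\eps}=\ell_{\eps}$. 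The data processing inequality applied to both orderings gives
\begin{equation*}
H(\pi^{\eps}|\ell_{\eps})+H(\ell_{\eps}|\pi^{\eps}) \leq H(P^{\eps}|Q^{\eps})+H(Q^{\eps}|P^{\eps}),
\end{equation*}
so it suffices to control the right-hand side.

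I would then use the identity $H(P^{\eps}|Q^{\eps})+H(Q^{\eps}|P^{\eps}) = \int \log\tfrac{dP^{\eps}}{dQ^{\eps}}\,d(P^{\eps}-Q^{\eps})$ with two explicit formulas. The $(f,g)$-decomposition gives $\log\tfrac{dP^{\eps}}{dR^{\eps}}(\omega)=\log f(\omega_0)+\log g(\omega_1)$, and Proposition \ref{prop:rn-deriv-diffusion} together with time rescaling from $[0,\eps]$ to $[0,1]$ gives
\begin{equation*}
\log\frac{dQ^{\eps}}{dR^{\eps}}(\omega) = \tfrac{1}{2}\log\tfrac{d\mu}{d\diffref}(\omega_0)+\tfrac{1}{2}\log\tfrac{d\mu}{d\diffref}(\omega_1) - \eps\int_{0}^{1}(\cU_{\mu}-\cU_{\diffref})(\omega_t)\,dt.
\end{equation*}
Every term in $\log\tfrac{dP^{\eps}}{dQ^{\eps}}$ that depends only on $\omega_0$ or $\omega_1$ integrates to zero against $dP^{\eps}-dQ^{\eps}$ because the two measures share marginals $\mu$ at times $0$ and $1$. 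Only the path integral survives, and using $(\omega_t)_{\#}Q^{\eps}\equiv \mu$ by stationarity and $(\omega_t)_{\#}P^{\eps}=\mu_t^{\eps}$, I arrive at
\begin{equation*}
H(P^{\eps}|Q^{\eps})+H(Q^{\eps}|P^{\eps}) = \eps\int_{0}^{1}\left(\Exp{\mu_t^{\eps}}[\cU_{\mu}-\cU_{\diffref}] - \Exp{\mu}[\cU_{\mu}-\cU_{\diffref}]\right)dt.
\end{equation*}

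To finish, I would plug the continuity equation $\partial_t \mu_t^{\eps} + \Div(\mu_t^{\eps} v_t^{\eps}) = 0$ (with $\mu_0^{\eps}=\mu$) into the above: with $F := \cU_{\mu}-\cU_{\diffref}$, Fubini gives $\int_{0}^{1}(\Exp{\mu_t^{\eps}}[F]-\Exp{\mu}[F])\,dt = \int_{0}^{1}(1-s)\Exp{\mu_s^{\eps}}[\langle \nabla_g F, v_s^{\eps}\rangle_g]\,ds$, and Cauchy-Schwarz with $(1-s)^2 \leq 1$ bounds this by $C_{\eps}^{1/2}\bigl(\int_{0}^{1}\|v_s^{\eps}\|_{L^2(\mu_s^{\eps})}^{2}\,ds\bigr)^{1/2}$. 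The velocity integral itself is controlled via Proposition \ref{prop:entropic-bb} applied to the same-marginal case combined with the optimality of $\pi^{\eps}$ against the competitor $\ell_{\eps}$. Computing $H(Q^{\eps}|R^{\eps}) = H(\mu|\diffref)+\tfrac{\eps}{8}I_g(\mu|\diffref)$ directly from the Girsanov formula above (using Proposition \ref{prop:cal-u-ip-identities} to identify $\Exp{\mu}[\cU_{\mu}-\cU_{\diffref}]=-\tfrac{1}{8}I_g(\mu|\diffref)$) and then DPI yields $H(\pi^{\eps}|R_{0\eps})\leq H(\mu|\diffref)+\tfrac{\eps}{8}I_g(\mu|\diffref)$. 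Substituting into the entropic Benamou-Brenier identity produces $\int_{0}^{1}\|v_s^{\eps}\|_{L^2(\mu_s^{\eps})}^{2}\,ds \leq \tfrac{\eps^2}{4}\bigl(I_g(\mu|\diffref)-\int_{0}^{1}I_g(\mu_t^{\eps}|\diffref)\,dt\bigr)$, and chaining the estimates gives precisely \eqref{eq:sym-rel-ent}. The $o(\eps^2)$ conclusion \eqref{eq:o-eps-2-limit} is then immediate, since $C_{\eps}^{1/2}$ is bounded and the Fisher-information gap tends to zero by assumption.

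The most delicate step is the boundary-term cancellation: it requires $\log f$, $\log g$, and $\log\tfrac{d\mu}{d\diffref}$ to be absolutely integrable against the time-$0$ and time-$1$ marginals of both $P^{\eps}$ and $Q^{\eps}$ before subtraction, which should ultimately follow from the finiteness of $H(\pi^{\eps}|R_{0\eps})$, $H(\ell_{\eps}|R_{0\eps})$, and $H(\mu|\diffref)$ under Assumption \ref{assumption:diffusions}. A secondary technical point is the legitimate use of the continuity equation for the (a priori only absolutely continuous) entropic interpolation to differentiate $t\mapsto \Exp{\mu_t^{\eps}}[F]$; this is standard for $F$ with $\nabla_g F\in L^2(\mu_t^{\eps})$ uniformly, which is exactly what the assumption $C_{\eps}<+\infty$ supplies.
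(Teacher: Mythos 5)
Your argument is correct, and it reaches the same key intermediate quantity as the paper's proof, but by a slightly different (and arguably cleaner) route. The paper works at the static level: it writes down the symmetric relative entropy exactly as $(\Exp{\ell_{\eps}}-\Exp{\pi^{\eps}})[-c(x,z,\eps)]$, introduces the remainder $R(x,z,\eps) := c(x,z,\eps) - \eps(\cU_\mu-\cU_\diffref)(x)$, bounds the $\ell_\eps$-expectation of $-c$ by $\frac{\eps}{8}I_g(\mu|\diffref)$ via a data-processing step through path space (the same DPI you invoke, just disguised), and then applies Jensen's inequality to the conditional-log inside $R$ to pass to the bridge mixture formula. You instead apply data processing once, directly at the level of the dynamic bridges $P^{\eps}, Q^{\eps}$, and then exploit the clean identity
\begin{equation*}
    H(P^{\eps}|Q^{\eps})+H(Q^{\eps}|P^{\eps}) = \int \log\frac{dP^{\eps}}{dQ^{\eps}}\,d(P^{\eps}-Q^{\eps}) = \eps\int_{0}^{1}\bigl(\Exp{\mu_t^{\eps}}-\Exp{\mu}\bigr)[\cU_\mu-\cU_\diffref]\,dt,
\end{equation*}
which follows from the $(f,g)$-decomposition, the Girsanov formula of Proposition \ref{prop:rn-deriv-diffusion}, and the cancellation of all $\omega_0$- and $\omega_1$-only terms against $d(P^{\eps}-Q^{\eps})$. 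This substitutes one inequality (DPI on the full path functional) for the paper's pair of steps (DPI to bound $\Exp{\ell_\eps}[-c]$, then Jensen on the bridge mixture) and avoids introducing $c$ and $R$ entirely. From there the two proofs coincide: the continuity equation with Fubini gives $\int_0^1(1-s)\Exp{\mu_s^{\eps}}[\langle\nabla_g F,v_s^{\eps}\rangle_g]\,ds$, the trivial bound $(1-s)\le 1$ and Cauchy--Schwarz give $C_\eps^{1/2}\bigl(\int_0^1\|v_s^{\eps}\|^2_{L^2(\mu_s^{\eps})}ds\bigr)^{1/2}$, and the kinetic-energy bound $\int_0^1\|v_s^{\eps}\|^2\,ds \le \tfrac{\eps^2}{4}\bigl(I_g(\mu|\diffref)-\int_0^1 I_g(\mu_t^\eps|\diffref)dt\bigr)$ follows from the entropic Benamou--Brenier identity; your derivation of that bound via $H(\pi^\eps|R_{0\eps})\le H(\mu|\diffref)+\tfrac{\eps}{8}I_g(\mu|\diffref)$ is equivalent to the paper's comparison with the constant curve $(\mu_t\equiv\mu,v_t\equiv 0)$ in Proposition \ref{prop:entropic-bb}. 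Your final paragraph correctly flags the two technical points (integrability of the boundary terms against both path measures before subtracting, and differentiability of $t\mapsto\Exp{\mu_t^{\eps}}[F]$ along the entropic interpolation); the paper handles the latter with the compactly supported truncation $\chi_k(\cU_\mu-\cU_\diffref)$ exactly as you sketch, and the former is indeed covered by finiteness of $H(\mu|\diffref)$ and the entropies $H(P^{\eps}|R^{\eps}),H(Q^{\eps}|R^{\eps})$ under Assumption \ref{assumption:diffusions}.
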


\begin{remark}
We present the following sufficient conditions for \eqref{eq:cu-integrability} to hold.
    \begin{itemize}
        \item Under Assumption \ref{assumption:manifold} (H1) and Assumption \ref{assumption:diffusions}, the gradients of $\cU_{\mu},\cU_{\diffref}$ are bounded. Thus, \eqref{eq:cu-integrability} automatically holds.
        \item Under Assumption \ref{assumption:manifold} (H2), when $\diffref = \vol$ (and thus $U_{\diffref} = \cU_{\diffref} = 0$), a sufficient condition for \eqref{eq:cu-integrability} is that $\mu$ has subexponential tails and $\norm{\nabla_g \cU_{\mu}}_{g}^2$ has polynomial growth. On Euclidean space, this is proven in \cite[Assumption 1, Lemma 2]{AHMP25}. For a proof in the (H2) setting, see Proposition \ref{prop:h2-int-argument} in the Appendix. 
    \end{itemize}
    Lastly, recall that Proposition \ref{prop:the-one-about-fi} presents sufficient conditions guaranteeing the convergence of $\int_0^{1} I(\mu_t^{\eps}|\diffref)dt$ to $I(\mu|\diffref)$. 
\end{remark}

\begin{proof}
As the major technical modifications to the manifold setting have been worked out in Section \ref{sec:preliminaries}, the broad strokes of this proof are simply mutadis mutandis of that of \cite[Theorem 1]{AHMP25}. For $x \in M$, observe by Proposition \ref{prop:rn-deriv-diffusion} that
    \begin{align}\label{eq:path-rn}
        \frac{dQ_x}{dR_x} &= \frac{dQ_{x}/dW_{x}}{dR_{x}/dW_{x}} 
        = \sqrt{\frac{e^{-U_{\mu}+U_{\diffref}}(\omega_{\eps})}{e^{-U_{\mu}+U_{\diffref}}(x)}}
        \exp\left(-\int_0^{\eps} (\cU_\mu-\cU_{\diffref})(\omega_t)dt\right)
    \end{align}
    Hence, we compute that 
    \begin{align}
        H(Q_x|R_x) &= \Exp{Q_x}\left[\frac{1}{2}\log \left(\frac{d\mu}{d\diffref}\right)(\omega_\eps)-\frac{1}{2}\log \left(\frac{d\mu}{d\diffref}\right)(x)- \int_0^{\eps} (\cU_{\mu}-\cU_{\diffref})(\omega_t)dt\right].
    \end{align}
    Take expectation over the initial value with $x \sim \mu$. From the stationarity of $Q$ and the identity in Proposition \ref{prop:cal-u-ip-identities},
    \begin{align*}
        \Exp{\mu}[H(Q_x|R_x)] &= \Exp{Q}\left[\frac{1}{2}\log \left(\frac{d\mu}{d\diffref}\right)(\omega_\eps)-\frac{1}{2}\log \left(\frac{d\mu}{d\diffref}\right)(\omega_0)- \int_0^{\eps} (\cU_{\mu}-\cU_{\diffref})(\omega_t)dt\right] \\
        &= -\int_0^{\eps} \Exp{Q}\left[(\cU_{\mu}-\cU_{\diffref})(\omega_t)\right] dt = \frac{\eps}{8}I_{g}(\mu|\diffref). 
    \end{align*}
    Define the quantity $c(x,z,\eps)$
    \begin{align}
        c(x,z,\eps) &:= -\log\left(\Exp{R_{x}}\left[\exp\left(-\int_0^{\eps}(\cU_{\mu}-\cU_{\diffref})(\omega_t)dt\right)\right]\middle| \omega_{\eps} = z\right)
    \end{align}
    and its corresponding remainder quantity
    \begin{align}
        R(x,z,\eps) &:= -\log\left(\Exp{R_{x}}\left[\exp\left(-\int_0^{\eps}(\cU_{\mu}-\cU_{\diffref})(\omega_t)-(\cU_{\mu}-\cU_{\diffref})(\omega_0) dt\right)\right]\middle| \omega_{\eps} = z\right).
    \end{align}
    From \eqref{eq:path-rn}, it holds that
    \begin{align}
        \frac{\ell_{\eps}(z|x)}{r_{\eps}(z|x)} &= \sqrt{\frac{e^{-U_{\mu}+U_{\diffref}}(z)}{e^{-U_{\mu}+U_{\diffref}}(x)}}\exp(-c(x,z,\eps)),
    \end{align}
    where we use $\ell_{\eps}(z|x)$, $r_{\eps}(z|x)$ to denote conditional densities starting from $x$ of $\ell_{\eps}$ and $R_{0\eps}$, respectively. Let $\mu r_{\eps} \in \cP(M \times M)$ be such that $\mu r_{\eps}(dxdz) = \mu(dx)r_{\eps}(dz|x)$, then
    \begin{align*}
        \frac{d\ell_{\eps}}{d\mu r_{\eps}}(x,z) = \frac{\ell_{\eps}}{r_{\eps}}(z|x)= \sqrt{\frac{e^{-U_{\mu}+U_{\diffref}}(z)}{e^{-U_{\mu}+U_{\diffref}}(x)}}\exp(-c(x,z,\eps)).
    \end{align*}
    Let $R_{\mu}$ denote the law of the reference diffusion process $(Y_t, t \geq 0)$ on $C([0,\eps],M)$ with initial distribution $Y_0 \sim \mu$.
    As $\ell_{\eps} \in \Pi(\mu,\mu)$ and $H(\mu|\diffref) = \Exp{\mu}[-U_{\mu}+U_{\diffref}]$ is finite, by the factorization of relative entropy and information processing inequality,
    \begin{align}\label{eq:static-ell-eps-mu-reps}
         \Exp{\ell_{\eps}}\left[-c(x,z,\eps)\right] = H(\ell_{\eps}|\mu r_{\eps}) \leq H(Q|R_{\mu}) = \Exp{\mu}[H(Q_x|R_x)] = \frac{\eps}{8}I_{g}(\mu|\diffref).
    \end{align}
    Altogether then, observe from the definition of $c(x,z,\eps)$, $R(x,z,\eps)$, and Proposition \ref{prop:cal-u-ip-identities} 
    \begin{align}\label{eq:remainder-bound}
        (\Exp{\ell_{\eps}}-\Exp{\pi^{\eps}})[-c(x,z,\eps)] &\leq \frac{\eps}{8}I_g(\mu|\diffref)+\eps \Exp{\mu}[\cU_{\mu}-\cU_{\diffref}] + \Exp{\pi^{\eps}}[R(x,z,\eps)] = \Exp{\pi^{\eps}}[R(x,z,\eps)].
    \end{align}
    Recall that with respect to $R_{0\eps}$ the Radon-Nikodym derivative of $\pi^{\eps}$ has the product structure given in \eqref{eq:intro-potent-defn}, where $a^{\eps}: M \to (0,+\infty)$ with $\log a^{\eps} \in L^{1}(\mu)$. It then follows that
    \begin{align*}
        \frac{d\ell_{\eps}}{d\pi^{\eps}}(x,z) &= \frac{d\ell_{\eps}/dR_{0\eps}}{d\pi^{\eps}/dR_{0\eps}}(x,z) = \frac{\sqrt{e^{-U_{\mu}+U_{\diffref}}(z)e^{-U_{\mu}+U_{\diffref}}(x)}\exp(-c(x,z,\eps))}{a^{\eps}(x)a^{\eps}(z)}.
    \end{align*}
    As $\pi^{\eps},\ell_{\eps} \in \Pi(\mu,\mu)$, all the terms in $\log \left(d\ell_{\eps}/d\pi^{\eps}\right)$ depending only on $x$ or $z$ alone cancel in the symmetric relative entropy calculation, giving with \eqref{eq:remainder-bound} that
    \begin{align}\label{eq:symm-rem-bdd}
        H(\ell_{\eps}|\pi^{\eps})+H(\pi^{\eps}|\ell_{\eps}) &= \left(\Exp{\ell_{\eps}}-\Exp{\pi^{\eps}}\right)\left[-c(x,z,\eps)\right] \leq \Exp{\pi^{\eps}}[R(x,z,\eps)].
    \end{align}
    Now, let $P^{\eps} \in \cP(C([0,1],M))$ denote the $\eps$-dynamic $\Schro$ bridge from $\mu$ to $\mu$. Let $(\mu_t^{\eps},v_t^{\eps}, t \in [0,1])$ denote the absolutely continuous curve in $\cP_2(M)$ given by the entropic interpolation. From the entropic Benamou-Brenier formula given in Proposition \ref{prop:entropic-bb}, comparison with the constant curve $(\mu_t \equiv \mu, v_t \equiv 0, t \in [0,1])$ gives
    \begin{align}\label{eq:ke-bdd-for-fi}
        \frac{1}{\eps^2}\int_{0}^{1} \norm{v_t^{\eps}}^2_{L^2(\mu_t^{\eps})} dt \leq \frac{1}{4}\left(I_{g}(\mu|\diffref)-\int_0^{1} I_{g}(\mu_t^{\eps}|\diffref)dt \right).
    \end{align}
    Let $\cL_{R}$ denote the generator associated to $R$, and let $R^{\eps}$ denote the stationary path measure on $C([0,1],M)$ associated to the rescaled $\eps \cL_{R}$. Observe that both $R^{\eps}$ and $R$ have the same stationary measure $\diffref$. With this rescaling, the remainder term writes as 
    \begin{align}
        R(x,z,\eps) &= -\log\left(\Exp{R_{x}^{\eps}}\left[\exp\left(-\eps \int_0^{1}(\cU_{\mu}-\cU_{\diffref})(\omega_t)-(\cU_{\mu}-\cU_{\diffref})(\omega_0) dt\right)\middle| \omega_{1} = z\right]\right).
    \end{align}
    As the bridges of $P^{\eps}$ are equal to the bridges of $R^{\eps}$, by Jensen's inequality it holds that
    \begin{align}\label{eq:rem-to-cont-eqn}
        \Exp{\pi^{\eps}}\left[R(x,z,\eps)\right] &\leq \eps \Exp{P^{\eps}}\left[\int_0^{1} (\cU_{\mu}-\cU_{\diffref})(\omega_t)-(\cU_{\mu}-\cU_{\diffref})(\omega_0) dt\right] = \eps \int_0^{1} (\Exp{\mu_t^{\eps}}[\cU_{\mu}-\cU_{\diffref}]-\Exp{\mu}[\cU_\mu-\cU_{\diffref}])dt. 
    \end{align}
    Let $(\chi_k,k \geq 1) \subset C_{c}^{\infty}(M)$ be a sequence of nonnegative functions increasing to $1$ pointwise such that $\norm{\nabla \chi_k}_{\infty}\leq k^{-1}$ (such a family exists as $M$ is complete). Set $\psi := \cU_\mu - \cU_{\diffref} \in C^{1}(M)$, then $\chi_k \psi \in C^{1}_{c}(M)$. By the definition of a weak solution to the continuity equation it holds for all $k \geq 1$ and $t \in [0,1]$ that
    \begin{align*}
        \Exp{\mu_t^{\eps}}[\chi_k \psi] - \Exp{\mu}[\chi_k \psi] &= \int_{0}^{t} \frac{d}{ds}\Exp{\mu_s^{\eps}}[\chi_k \psi] ds = \int_0^{t}  \Exp{\mu_s^{\eps}} \left[\langle \nabla (\chi_k \psi), v_t^{\eps}\rangle_{g} \right]ds.
    \end{align*}
    By \eqref{eq:cu-integrability}, send $k \to +\infty$ and apply Dominated Convergence to conclude for Leb-a.e.\ $t \in [0,1]$ 
    \begin{align*}
        \Exp{\mu_t^{\eps}}[\psi]-\Exp{\mu}[\psi] &= \int_0^{t} \Exp{\mu_{s}^{\eps}}[\langle \nabla_g \psi, v_s^{\eps} \rangle_g]ds. 
    \end{align*} 
    Thus, by Cauchy-Schwarz and \eqref{eq:ke-bdd-for-fi}, \eqref{eq:rem-to-cont-eqn} becomes
    \begin{align*}
        \Exp{\pi^{\eps}}\left[R(x,z,\eps)\right] &\leq \eps^2 \left(\int_0^{1} \norm{\nabla_g (\cU_{\mu}-\cU_{\diffref})}_{L^{2}(\mu_t^{\eps})}^2dt\right)^{1/2}\left(\frac{1}{\eps^2}\int_0^1 \norm{v_{t}^{\eps}}_{L^2(\mu_t^{\eps})}^2 dt\right)^{1/2} \\
        &\leq \frac{\eps^2}{2}\left(\int_0^{1} \norm{\nabla_g (\cU_{\mu}-\cU_{\diffref})}_{L^{2}(\mu_t^{\eps})}^2dt\right)^{1/2} \left(I_{g}(\mu|\diffref)-\int_0^{1} I_{g}(\mu_t^{\eps}|\diffref)dt \right)^{1/2}. 
    \end{align*}
\end{proof}

\subsection{Some consequences}
From Theorem \ref{thm:sym-rel-ent}, we extend the entropic cost expansion developed in \cite[Theorem 1.6]{conforti21deriv} in the case of identical marginals. 
\begin{corollary}\label{cor:ct-cost-exp}
    Let $\mu R_{0\eps} \in \cP(M \times M)$ denote the joint distribution $(Y_0,Y_{\eps})$ in \eqref{eq:sec3-ref-diff} with $Y_0 \sim \mu$. In the setting of Theorem \ref{thm:sym-rel-ent}, the entropic cost in the same marginal problem has the following expansion about $\eps = 0$
    \begin{align}
        H(\pi^{\eps}|\mu R_{0\eps}) &= \frac{\eps}{8}I_{g}(\mu|\diffref)+o(\eps^2). 
    \end{align}
\end{corollary}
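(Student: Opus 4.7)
The plan is to leverage the chain of identities already established inside the proof of Theorem~\ref{thm:sym-rel-ent}; no fundamentally new estimate is required. The key ingredient is a ``Pythagorean'' identity,
\[
H(\ell_{\eps}|\mu R_{0\eps}) = H(\ell_{\eps}|\pi^{\eps}) + H(\pi^{\eps}|\mu R_{0\eps}),
\]
which I would derive first. It follows from the product form $d\pi^{\eps}/dR_{0\eps}(x,z) = a^{\eps}(x)a^{\eps}(z)$: writing $\log \tfrac{d\ell_{\eps}}{d(\mu R_{0\eps})} = \log \tfrac{d\ell_{\eps}}{d\pi^{\eps}} + \log \tfrac{d\pi^{\eps}}{d(\mu R_{0\eps})}$ and noting that the final log is a sum of functions of a single coordinate, $\Exp{\ell_{\eps}}[\log \tfrac{d\pi^{\eps}}{d(\mu R_{0\eps})}] = \Exp{\pi^{\eps}}[\log \tfrac{d\pi^{\eps}}{d(\mu R_{0\eps})}]$ because $\ell_{\eps},\pi^{\eps}\in \Pi(\mu,\mu)$ share both marginals.

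Next I would extract an expansion for $H(\ell_{\eps}|\mu R_{0\eps})$ directly from the proof of Theorem~\ref{thm:sym-rel-ent}. The proof already records $\Exp{\ell_{\eps}}[-c(x,z,\eps)] = H(\ell_{\eps}|\mu R_{0\eps})$ together with the decomposition $c(x,z,\eps) = R(x,z,\eps) + \eps(\cU_{\mu}-\cU_{\diffref})(x)$; combining this with Proposition~\ref{prop:cal-u-ip-identities}, which gives $-\eps\Exp{\mu}[\cU_{\mu}-\cU_{\diffref}] = \tfrac{\eps}{8}I_{g}(\mu|\diffref)$, yields $\Exp{\pi^{\eps}}[-c(x,z,\eps)] = \tfrac{\eps}{8}I_{g}(\mu|\diffref) - \Exp{\pi^{\eps}}[R(x,z,\eps)]$. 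Substituting both into the explicit equality $H(\ell_{\eps}|\pi^{\eps})+H(\pi^{\eps}|\ell_{\eps}) = (\Exp{\ell_{\eps}}-\Exp{\pi^{\eps}})[-c(x,z,\eps)]$ from \eqref{eq:symm-rem-bdd} and rearranging gives
\[
H(\ell_{\eps}|\mu R_{0\eps}) = \tfrac{\eps}{8}I_{g}(\mu|\diffref) + \bigl(H(\ell_{\eps}|\pi^{\eps})+H(\pi^{\eps}|\ell_{\eps})\bigr) - \Exp{\pi^{\eps}}[R(x,z,\eps)].
\]

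From \eqref{eq:remainder-bound} we have the sandwich $0 \leq H(\ell_{\eps}|\pi^{\eps})+H(\pi^{\eps}|\ell_{\eps}) \leq \Exp{\pi^{\eps}}[R(x,z,\eps)]$, and Theorem~\ref{thm:sym-rel-ent} bounds the right-most quantity by $o(\eps^2)$; hence both correction terms above are $o(\eps^2)$ and $H(\ell_{\eps}|\mu R_{0\eps}) = \tfrac{\eps}{8}I_{g}(\mu|\diffref) + o(\eps^2)$. Feeding this into the Pythagorean identity from the first step and applying Theorem~\ref{thm:sym-rel-ent} once more to conclude $H(\ell_{\eps}|\pi^{\eps}) = o(\eps^2)$ delivers
\[
H(\pi^{\eps}|\mu R_{0\eps}) = H(\ell_{\eps}|\mu R_{0\eps}) - H(\ell_{\eps}|\pi^{\eps}) = \tfrac{\eps}{8}I_{g}(\mu|\diffref) + o(\eps^2).
\]
The only mildly non-obvious step is recognising that non-negativity of the sum of relative entropies automatically upgrades the one-sided $o(\eps^2)$ upper bound in Theorem~\ref{thm:sym-rel-ent} into a two-sided control on $\Exp{\pi^{\eps}}[R(x,z,\eps)]$, which is precisely what is needed to pin down $H(\ell_{\eps}|\mu R_{0\eps})$ at the order $o(\eps^2)$ rather than merely bounding it from above.
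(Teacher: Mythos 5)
Your argument is correct and takes the same essential route as the paper: the Pythagorean theorem for relative entropy combined with Theorem~\ref{thm:sym-rel-ent}. The paper's proof is a one-liner exhibiting only the upper bound $H(\pi^{\eps}|\mu R_{0\eps}) \leq H(\ell_{\eps}|\mu R_{0\eps}) - H(\pi^{\eps}|\ell_{\eps})$ and then asserting the claimed equality; note the standard I-projection Pythagorean identity would produce $H(\ell_{\eps}|\pi^{\eps})$ rather than $H(\pi^{\eps}|\ell_{\eps})$, so the paper's display appears to transpose the arguments. You derive the exact Pythagorean identity directly from the $(a^{\eps}\otimes a^{\eps})$ product structure of $d\pi^{\eps}/dR_{0\eps}$ and the shared marginals, and, more importantly, you supply the two-sided control of $H(\ell_{\eps}|\mu R_{0\eps})$ that the paper elides: the exact identity $H(\ell_{\eps}|\mu R_{0\eps}) = \tfrac{\eps}{8}I_{g}(\mu|\diffref) + \bigl(H(\ell_{\eps}|\pi^{\eps})+H(\pi^{\eps}|\ell_{\eps})\bigr) - \Exp{\pi^{\eps}}[R(x,z,\eps)]$, together with the sandwich $0 \leq H(\ell_{\eps}|\pi^{\eps})+H(\pi^{\eps}|\ell_{\eps}) \leq \Exp{\pi^{\eps}}[R(x,z,\eps)] = o(\eps^{2})$ from \eqref{eq:remainder-bound} and Theorem~\ref{thm:sym-rel-ent}, shows that the correction is $o(\eps^{2})$ from both sides. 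This lower bound is genuinely needed for the corollary's stated equality, and your treatment is complete where the paper's is telegraphic.
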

\begin{proof}
    As in the Euclidean case presented in \cite[Corollary 1]{AHMP25} this is a simply consequence of the Pythagorean Theorem for relative entropy \cite{csiszar75idiv}. That is,
\begin{align*}
    H(\pi^{\eps}|\mu R_{0\eps}) &\leq H(\ell_{\eps}|\mu R_{0\eps})-H(\pi^{\eps}|\ell_{\eps}) = \frac{\eps}{8}I_g(\mu|\diffref)+o(\eps^2). 
\end{align*}
\end{proof}

Another benefit of the diffusion approximation established in Theorem \ref{thm:sym-rel-ent} is that we can quantify the rate of decay of measures of sets under $\pi^{\eps}$ that are separated from the diagonal of $M \times M$ as $\eps \downarrow 0$. The following bound will be used in several places in the sequel. 
\begin{proposition}\label{prop:almost-ldp-sb}
Resume the setting and notation of Theorem \ref{thm:sym-rel-ent}. Let $c > 0$ and define
\begin{align*}
    N = \left\{(x,z) \in M \times M: d(x,z)\leq c\right\}.
\end{align*}
Under (H1), for all $\eps > 0$ small enough it holds that
\begin{align}\label{eq:pi-eps-compact}
    \pi^{\eps}((M \times M) \setminus N) \leq H(\pi^{\eps}|\ell_{\eps})+2\exp\left(-\frac{c}{2\eps}\right).
\end{align}
Under (H2), suppose that $\Exp{\mu}\left[\norm{\nabla_g U_{\mu}}_g^p\right] < +\infty$ for some $p \geq 2$. There is $K >0$ such that
\begin{align}\label{eq:pi-eps-noncompact}
    \pi^{\eps}((M \times M) \setminus N) &\leq H(\pi^{\eps}|\ell_{\eps})+Kc^{-p}\eps^{p/2}.
\end{align}
\end{proposition}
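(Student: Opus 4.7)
The plan is to reduce both bounds to a single variational inequality combined with a tail estimate for $\ell_{\eps}$ near the diagonal. For any measurable $A$ and probability measures $Q \ll P$, applying the Donsker–Varadhan variational formula with test function $\mathbf{1}_A$ yields
\begin{equation*}
    Q(A) \leq H(Q|P) + \log(1 + (e-1)P(A)) \leq H(Q|P) + (e-1)P(A).
\end{equation*}
I will apply this with $Q = \pi^{\eps}$, $P = \ell_{\eps}$, and $A = (M \times M) \setminus N$. Everything then reduces to bounding $\ell_{\eps}(d_g(X_0, X_{\eps}) > c)$, where $(X_t)$ is the target diffusion \eqref{eq:sec3-target-diff} started from stationarity.

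Under (H1), $M$ is compact and $\cU_{\mu}$ is bounded, so by Proposition \ref{prop:rn-deriv-diffusion} the conditional law of $X_{\eps}$ given $X_0 = x$ has a density with respect to the Brownian transition kernel $r_{\eps}(x,\cdot)$ that is uniformly bounded in $(x,\eps) \in M \times (0,1]$. Combining the pointwise expansion of Proposition \ref{prop:heat-kernel-asymp} with the standard Li--Yau-type global upper bound (available because of the uniform Ricci lower bound) gives $r_{\eps}(x,z) \leq C \eps^{-d/2} \exp(-d_g^2(x,z)/(C\eps))$ off the measure-zero cut locus. Integrating over $\{d_g(x,z) > c\}$ against the finite volume measure of $M$ yields $\ell_{\eps}(d_g(X_0,X_{\eps}) > c) \leq C \eps^{-d/2} \exp(-c^2/(C\eps))$, which for $\eps$ small enough is dominated by $\frac{2}{e-1}\exp(-c/(2\eps))$; plugging this into the variational inequality gives \eqref{eq:pi-eps-compact}.

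Under (H2), I would instead use Markov's inequality with exponent $p$:
\begin{equation*}
    \ell_{\eps}(d_g(X_0,X_{\eps}) > c) \leq c^{-p}\, \mathbb{E}_{\ell_{\eps}}[d_g(X_0,X_{\eps})^p] \leq C\, c^{-p}\, \mathbb{E}[\|X_{\eps} - X_0\|^p],
\end{equation*}
where the last inequality uses the equivalence of $d_g$ and the Euclidean distance on the global chart guaranteed by (H2). Writing the target SDE in this chart as in \eqref{eq:manifold-diff-global-chart}, the diffusion matrix $\sqrt{g^{-1}}$ is bounded and the drift is bounded pointwise by $C(1 + \|\nabla_g U_{\mu}\|_g)$ (Christoffel-symbol terms are bounded by (H2)). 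By Burkholder--Davis--Gundy on the martingale part, Jensen's inequality on the drift integral, and stationarity ($X_t \sim \mu$ for each $t$), one obtains
\begin{equation*}
    \mathbb{E}[\|X_{\eps} - X_0\|^p] \leq C_p\bigl(\eps^{p/2} + \eps^p\, \mathbb{E}_{\mu}[\|\nabla_g U_{\mu}\|_g^p]\bigr) \leq K \eps^{p/2}
\end{equation*}
for $\eps \in (0,1)$, using exactly the moment hypothesis in the proposition. Combined with the variational inequality, this gives \eqref{eq:pi-eps-noncompact}.

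The main obstacle is the tail control of $\ell_{\eps}$ rather than the entropic step. In (H1) the delicate point is that Proposition \ref{prop:heat-kernel-asymp} only gives an asymptotic expansion off the cut locus, so one must invoke a genuinely global heat kernel upper bound (available under Ricci lower bound on compact $M$) to integrate over $\{d_g(x,z) > c\}$; the replacement of the Gaussian exponent $c^2/(C\eps)$ by the cruder $c/(2\eps)$ is a harmless relaxation valid for small $\eps$. In (H2) the subtlety is merging the global-chart SDE analysis with the $\mu$-moment hypothesis on $\nabla_g U_{\mu}$ to control the drift contribution; stationarity is what makes the time-integrated drift moment behave like $\eps^p$ rather than something worse.
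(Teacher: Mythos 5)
Your entropic reduction is correct and is really the same step as the paper's: the paper applies the Fenchel inequality $xy \le x\log x - x + e^y$ with $y = \mathbbm{1}_{(M\times M)\setminus N}$ to obtain $\pi^{\eps}(A) \le H(\pi^{\eps}|\ell_{\eps}) + (1+\tfrac{e}{2})\ell_{\eps}(A)$, which is Donsker--Varadhan in disguise, so your constant $(e-1)$ is a marginal improvement and not a material difference. Your (H2) argument coincides with the paper's, which also invokes Markov's inequality together with the diffusion moment bound $\Exp{\ell_{\eps}}[d^{p}(x,z)] \lesssim \eps^{p/2}$ (Proposition \ref{prop:diff-fourth-moment}); your BDG\,+\,stationarity sketch is essentially a restatement of that proposition's proof.

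Where you genuinely diverge is (H1). The paper controls $\ell_{\eps}((M\times M)\setminus N)$ by invoking the large deviation principle for the family $(\ell_{\eps})_{\eps>0}$ (Proposition \ref{prop:ldp-rate-diff}), which gives $\limsup_{\eps\downarrow 0}\eps\log\ell_{\eps}(d(x,z)>c) \le -c^2/2$ directly from the rate function $\tfrac{1}{2}d^2$. You instead bound the conditional law of $X_{\eps}$ given $X_0=x$ by a uniform multiple of the Brownian kernel via Proposition \ref{prop:rn-deriv-diffusion} and then apply a Li--Yau-type global Gaussian upper bound on $r_{\eps}$. This is a legitimate alternative: the LDP route is cleaner and stays entirely at the level of rate functions, while your route is more hands-on and would give explicit non-asymptotic constants. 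Two small remarks: (i) you do not actually need Proposition \ref{prop:heat-kernel-asymp} here---the Li--Yau upper bound alone suffices, and it holds globally (not merely off the cut locus); (ii) your claim that passing from the Gaussian exponent $c^2/(C\eps)$ to $c/(2\eps)$ is harmless is only true when $c \ge C/2$. For small $c$ this domination fails. The paper's LDP argument has the analogous issue when $c \le 1$. In practice this is immaterial because the proposition is only ever applied with a fixed $c>0$ and the precise form of the exponential is not used---any stretched-exponential decay in $\eps$ would do---but if you want the displayed inequality to hold literally for all $c>0$ you should either enlarge the constant $2$ or weaken the stated exponent.
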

\begin{proof}
    By Fenchel's inequality $xy \leq x\log x - x + \exp(y)$ for all $x > 0$, $y \in \mathbb{R}$, giving
    \begin{align*}
        \pi^{\eps}((M \times M) \setminus N) 
        &\leq \Exp{\ell_{\eps}}\left[\frac{d\pi^{\eps}}{d\ell_{\eps}}\log\left(\frac{d\pi^{\eps}}{d\ell_{\eps}}\right)-\frac{d\pi^{\eps}}{d\ell_{\eps}}+\exp\left(\mathbbm{1}((M \times M)\setminus N)\right)\right] \\
        &\leq H(\pi^{\eps}|\ell_{\eps}) + \left(1+\frac{e}{2}\right)\ell_{\eps}\left((M \times M)\setminus N\right).
    \end{align*}
    The last line follows the numerical inequality $e^x \leq 1 + x + \frac{e}{2}x^2$ for all $x \in [0,1]$. Thus it remains to quantify the decay of $\ell_{\eps}((M \times M)\setminus N)$. Under (H1), \eqref{eq:pi-eps-compact} follows from the Large Deviation Principle for $(\ell_{\eps}, \eps > 0)$ established in Proposition \ref{prop:ldp-rate-diff}. Under (H2) and the additional integrability hypothesis on $\nabla_g U_{\mu}$, \eqref{eq:pi-eps-noncompact} follows from Markov's inequality and Proposition \ref{prop:diff-fourth-moment}. That is, there is some $K > 0$ such that
    \begin{align}\label{eq:diff-off-diag-concen}
        \ell_{\eps}\left((M \times M)\setminus N\right) \leq c^{-p}\int_{M \times M} d^p(x,z)\ell_{\eps}(dxdz) \leq Kc^{-p}\eps^{p/2}. 
    \end{align}
\end{proof}

\section{Generator and $\Schro$ Potential Analysis}\label{sec:generator-analysis}
We now leverage the diffusion approximation for the same-marginal $\Schro$ bridge to develop further small temperature properties of same-marginal $\Schro$ bridges. 

\subsection{Generator Transfer under Curvature Condition}
We call the following Theorem a ``generator transfer'' Theorem. In this result, we show that a sufficiently strong diffusion approximation (in relative entropy) endows a family of integral operators with no a priori Markov structure a property resembling that of a generator. 
This is a geometric generalization to the Euclidean result in \cite[Theorem 3]{AHMP25}. As we will apply this result in various settings, we state the following Theorem more abstractly. 
\begin{theorem}\label{thm:generator-transfer}
Let $(M,g)$ satisfy Assumption \ref{assumption:manifold}, and let $\mu \in \cP_2(M)$ satisfy $\mathrm{CD}(\kappa,\infty)$ for some $\kappa \in \mathbb{R}$. Let $(X_t, t \geq 0)$ be the diffusion on $M$ given by \eqref{eq:sec3-target-diff}. Assume that this SDE has a unique weak solution that does not explode, and set $\beta^{\eps} = \mathrm{Law}(X_0,X_{\eps}) \in \Pi(\mu,\mu)$. Let $(\alpha^{\eps},\eps > 0) \subset \Pi(\mu,\mu)$ be such that $H(\alpha^{\eps}|\beta^{\eps}) = o(\eps)$. Let $\cL$ denote the generator of the diffusion in \eqref{eq:sec3-target-diff}, and let $D(\cL)$ denote its domain \cite[Section 3.4 (D7)]{bgl-markov}. It then holds for any $\xi \in \mathrm{Lip}(M) \cap D(\cL)$ in $L^2(\mu)$ that
    \begin{align}
        \lim\limits_{\eps \downarrow 0} \frac{1}{\eps}\left(\Exp{\alpha^{\eps}}[\xi(Y)|X=x]-\Exp{\beta^{\eps}}[\xi(Y)|X=x]\right) = 0.
    \end{align}
    In particular, this implies that in $L^2(\mu)$
    \begin{align}
        \lim\limits_{\eps \downarrow 0} \frac{1}{\eps}\left(\Exp{\alpha^{\eps}}[\xi(Y)|X=x] - \xi(x)\right) = \cL\xi(x). 
    \end{align}
\end{theorem}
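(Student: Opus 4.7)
The plan is to upgrade the entropy hypothesis $H(\alpha^\eps|\beta^\eps) = o(\eps)$ to an $L^2(\mu)$ control of conditional expectations by routing through the $\Was{2}$-distance between the conditional laws, and then invoking a Bakry--\'Emery local log-Sobolev inequality whose constant vanishes linearly in $\eps$.

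Fix $\xi \in \mathrm{Lip}(M) \cap D(\cL)$ with Lipschitz constant $L$. By Kantorovich--Rubinstein duality applied pointwise in $x$ and the elementary bound $\Was{1} \leq \Was{2}$,
\[
\left| \Exp{\alpha^\eps}[\xi(Y)|X=x] - \Exp{\beta^\eps}[\xi(Y)|X=x] \right| \leq L\, \Was{2}\!\left(\alpha^\eps(\cdot|x),\beta^\eps(\cdot|x)\right).
\]
Since $(M,g,\mu)$ satisfies $\mathrm{CD}(\kappa,\infty)$, the Bakry--\'Emery local log-Sobolev inequality (see e.g.\ \cite[Thm.\ 5.5.2]{bgl-markov}) gives, for every smooth $f \geq 0$ and $t > 0$,
\[
P_t(f\log f) - (P_t f)\log(P_t f) \leq c(t)\, P_t\!\left(\frac{\|\nabla_g f\|_g^2}{f}\right),\qquad c(t) := \frac{1-e^{-2\kappa t}}{2\kappa}.
\]
Evaluating pointwise at a fixed $x \in M$ asserts exactly that the transition measure $\beta^\eps(\cdot|x) = P_\eps(x,\cdot)$ satisfies a log-Sobolev inequality with constant $c(\eps) = O(\eps)$ as $\eps \downarrow 0$. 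The Otto--Villani theorem then upgrades this to the transport-entropy bound
\[
\Was{2}^2\!\left(\alpha^\eps(\cdot|x),\beta^\eps(\cdot|x)\right) \leq C\eps\, H\!\left(\alpha^\eps(\cdot|x) \,\big|\, \beta^\eps(\cdot|x)\right),
\]
for a constant $C$ depending only on $\kappa$ and all sufficiently small $\eps$.

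To finish, apply the chain rule for relative entropy: because $\alpha^\eps$ and $\beta^\eps$ share the first marginal $\mu$, one has $\int_M H(\alpha^\eps(\cdot|x)|\beta^\eps(\cdot|x)) \mu(dx) = H(\alpha^\eps|\beta^\eps) = o(\eps)$. Squaring the first display and integrating with the transport-entropy estimate yields
\[
\left\|\Exp{\alpha^\eps}[\xi(Y)|X=\cdot] - \Exp{\beta^\eps}[\xi(Y)|X=\cdot]\right\|_{L^2(\mu)}^{2} \leq L^2 C\eps \cdot H(\alpha^\eps|\beta^\eps) = o(\eps^2),
\]
which gives the first limit after dividing by $\eps^2$. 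For the second, observe that $\Exp{\beta^\eps}[\xi(Y)|X=x] = P_\eps\xi(x)$ and use the defining property of the $L^2(\mu)$-generator: $\xi \in D(\cL)$ means precisely $\eps^{-1}(P_\eps\xi - \xi) \to \cL\xi$ in $L^2(\mu)$, and the triangle inequality then closes the argument. The main obstacle is the clean application of the Bakry--\'Emery local LSI and the Otto--Villani theorem in the full generality of Assumption \ref{assumption:manifold}: under (H1) both are classical, but under the non-compact case (H2), where $\diffref$ need not be finite, some care is needed to certify that the pointwise semigroup inequality transfers to a measure-level LSI for $\beta^\eps(\cdot|x)$ uniformly in $x$ and that Otto--Villani applies without additional integrability restrictions on the potentials.
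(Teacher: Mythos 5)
Your proposal follows essentially the same route as the paper: both deduce a log-Sobolev inequality for the transition measures $\beta^\eps(\cdot|x)$ from the Bakry--\'Emery local LSI under $\mathrm{CD}(\kappa,\infty)$, upgrade it to a Talagrand $T_2$ inequality via Otto--Villani, bound the pointwise difference of conditional expectations by $\norm{\xi}_{\mathrm{Lip}}\Was{2}(\alpha^\eps_x,\beta^\eps_x)$, and integrate using the chain rule for relative entropy; your appeal to Kantorovich--Rubinstein duality plus $\Was{1}\leq\Was{2}$ is a cosmetic substitute for the paper's explicit gluing/Jensen step. The concern you flag about the non-compact case is handled in the paper by citing \cite[Proposition~1, Theorem~5.2]{gigli-ledoux-lsi-to-tala}, which establishes LSI $\Rightarrow$ $T_2$ in the required generality.
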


\begin{proof}
Let $(p_t(x,\cdot),t > 0, x \in M)$ denote the transition densities of the diffusion process on $M$ with stationary measure $\mu$ satisfying $\mathrm{CD}(\kappa,\infty)$. By \cite[Proposition 5.5.2]{bgl-markov}, the triplet $(M,g,p_t(x,y)\mu(dy))$ satisfies $\mathrm{LSI}\left(\frac{2}{\kappa}(1-e^{-\kappa t})\right)$ (see \cite[Definition 5.1.1]{bgl-markov}) for all $x \in M$ and $t > 0$. Note the different scaling due to differing conventions in \cite{bgl-markov}. When $\kappa = 0$, interpret the LSI constant as being equal to $2t$. By \cite[Proposition 1, Theorem 5.2]{gigli-ledoux-lsi-to-tala} (originally from \cite{otto-villani-00}), each such triplet satisfies a Talagrand inequality, $T_{2}\left(\frac{2}{\kappa}(1-e^{-\kappa t})\right)$. That is, for all $\eta \in \cP(M)$,
\begin{align}\label{eq:cd-tala-transdens}
    \Was{2}^{2}(\eta,p_{t}(x,y)\mu(dy)) \leq \frac{1}{\kappa}(1-e^{-\kappa t}) H(\eta|p_{t}(x,y)\mu(dy)).
\end{align}

We proceed with a coupling argument identical to that of \cite[Theorem 3]{AHMP25}. Fix $\eps > 0$. Construct via a gluing argument $(X,Y,Z)$ a coupling of $(\mu,\mu,\mu)$ such that for each $x \in M$, $(Y|X=x,Z|X=x)$ is the $\Was{2}$ optimal coupling between $p_{\eps}(x,\cdot)\mu(\cdot)$ and $\alpha^{\eps}_{x}(\cdot)$, where $\alpha^{\eps}_x$ is the conditional distribution under $\alpha^{\eps}$ of the second coordinate given that the first is equal to $x$. Now, take $\xi \in \text{Lip}(M)$, then it holds by Jensen's inequality, $\Was{2}$ optimality of the construction, and Talagrand's inequality in \eqref{eq:cd-tala-transdens}
\begin{align*}
    \frac{1}{\eps}\abs{\Exp{\alpha^{\eps}}[\xi(Y)|X=x]-\Exp{\beta^{\eps}}[\xi(Z)|X=x]} &\leq \frac{1}{\eps} \norm{\xi}_{Lip}\Exp{}\left[d(Y,Z)|X=x\right] \leq \frac{1}{\eps}\norm{\xi}_{Lip} \Was{2}(\alpha^{\eps}_{x},p_{\eps}(x,\cdot)\mu(\cdot)) \\
    &\leq \frac{1}{\eps}\norm{\xi}_{Lip} \cdot \sqrt{\frac{1-e^{-\kappa \eps}}{\kappa} H(\alpha^{\eps}_{x}|p_{\eps}(x,\cdot)\mu(\cdot))}.
\end{align*}
Square both sides and take expectation with respect to $\mu$. As $\alpha^{\eps}$ and $\beta^{\eps}$ are couplings of the same measures, it holds from the factorization of relative entropy that
\begin{align*}
    \int \frac{1}{\eps^2}\abs{\Exp{\alpha^{\eps}}[\xi(Y)|X=x]-\Exp{\beta^{\eps}}[\xi(Z)|X=x]}^2 \mu(x)dx &\leq \frac{1}{\eps^2}\norm{\xi}_{Lip}^2 \cdot \frac{1-e^{-\kappa \eps}}{\kappa} H(\alpha^{\eps}|\beta^{\eps}) \\
    &= \norm{\xi}_{Lip}^2 \left(\frac{1-e^{-\kappa \eps}}{\kappa \eps}\right) \cdot \frac{1}{\eps}H(\alpha^{\eps}|\beta^{\eps}).
\end{align*}
As $\lim\limits_{\eps \downarrow 0} (1-e^{-\kappa \eps})/(\kappa \eps) = 1$, this bound vanishes as $H(\alpha^{\eps}|\beta^{\eps}) = o(\eps)$.
\end{proof}

Observe that the above Theorem \ref{thm:generator-transfer} only requires a diffusion approximation of order $o(\eps)$. In Theorem \ref{thm:sym-rel-ent}, we develop a diffusion approximation of order $o(\eps^2)$ to the static $\Schro$ bridge. In the following proposition, we show how this faster rate of convergence can enlarge the function class under which the generator transfer occurs.
\begin{proposition}[Extension to Theorem \ref{thm:generator-transfer}]\label{prop:generator-extn}
    In the setting of Theorem \ref{thm:generator-transfer}, suppose now that $H(\alpha^{\eps}|\beta^{\eps}) = o(\eps^2)$. Let $(\xi_{\eps}, \eps > 0) \subset \mathrm{Lip}(M) \cap D(\cL)$ and $\xi \in D(\cL)$ be such that $\norm{\xi_{\eps}-\xi}_{L^2(\mu)} = o(\eps)$. If $\sup\limits_{\eps > 0} \sqrt{\eps}\norm{\xi_{\eps}}_{Lip} < +\infty$, then it holds that
    \begin{align}
        \lim\limits_{\eps \downarrow 0} \frac{1}{\eps}\left(\Exp{\alpha^{\eps}}[\xi(Y)|X=x] - \xi(x)\right) = \cL\xi(x). 
    \end{align}
\end{proposition}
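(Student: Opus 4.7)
The plan is to reduce the statement for $\xi \in D(\cL)$ to the Lipschitz case already covered by Theorem \ref{thm:generator-transfer}, by inserting the approximants $\xi_\eps$. The role of the hypotheses is to make the three resulting error terms match the budget. Concretely, I would write
\begin{align*}
    \frac{1}{\eps}\!\left(\Exp{\alpha^\eps}[\xi(Y)|X=x] - \xi(x)\right)
    &= \frac{1}{\eps}\Exp{\alpha^\eps}[(\xi-\xi_\eps)(Y)|X=x] \\
    &\quad + \frac{1}{\eps}\!\left(\Exp{\alpha^\eps}[\xi_\eps(Y)|X=x] - \Exp{\beta^\eps}[\xi_\eps(Z)|X=x]\right) \\
    &\quad + \frac{1}{\eps}\Exp{\beta^\eps}[(\xi_\eps-\xi)(Z)|X=x] \\
    &\quad + \frac{1}{\eps}\!\left(\Exp{\beta^\eps}[\xi(Z)|X=x] - \xi(x)\right),
\end{align*}
and show each term converges in $L^2(\mu)$, with the last one converging to $\cL\xi$ essentially by definition since $\Exp{\beta^\eps}[\xi(Z)|X=x] = P_\eps\xi(x)$ is the Markov semigroup of \eqref{eq:sec3-target-diff}.

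For the two approximation error terms, the marginals of $\alpha^\eps$ and $\beta^\eps$ are both $\mu$, so the tower property followed by Jensen gives
\[
    \left\| \Exp{\alpha^\eps}[(\xi-\xi_\eps)(Y)|X=\cdot] \right\|_{L^2(\mu)}^2 \leq \Exp{\alpha^\eps}\!\left[(\xi-\xi_\eps)(Y)^2\right] = \|\xi-\xi_\eps\|_{L^2(\mu)}^2 = o(\eps^2),
\]
and identically for the $\beta^\eps$ term. Dividing by $\eps$ gives $o(1)$ in $L^2(\mu)$ for both.

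For the middle term, I would repeat the coupling/Talagrand argument from the proof of Theorem \ref{thm:generator-transfer} verbatim, but applied to $\xi_\eps$ (which is Lipschitz by assumption). This produces
\[
    \int_M \frac{1}{\eps^2}\left|\Exp{\alpha^\eps}[\xi_\eps(Y)|X=x] - \Exp{\beta^\eps}[\xi_\eps(Z)|X=x]\right|^2 \mu(dx) \leq \|\xi_\eps\|_{Lip}^2 \cdot \frac{1-e^{-\kappa\eps}}{\kappa\eps} \cdot \frac{1}{\eps} H(\alpha^\eps|\beta^\eps).
\]
Here the hypothesis $\sup_\eps \sqrt{\eps}\,\|\xi_\eps\|_{Lip} < +\infty$ controls the first factor by $O(1/\eps)$, the second factor tends to $1$, and $H(\alpha^\eps|\beta^\eps)/\eps = o(\eps)$ by the improved rate $H(\alpha^\eps|\beta^\eps) = o(\eps^2)$. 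The product is $o(1)$, so this term also vanishes in $L^2(\mu)$.

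The main delicacy, and essentially the only nontrivial part, is the bookkeeping in the last display: the allowed blow-up $\|\xi_\eps\|_{Lip}^2 = O(1/\eps)$ and the improved rate $H(\alpha^\eps|\beta^\eps) = o(\eps^2)$ are precisely balanced so that their product, divided by the extra $\eps$ coming from Cauchy--Schwarz in the coupling argument, is $o(1)$. This is the reason Theorem \ref{thm:sym-rel-ent}'s sharper $o(\eps^2)$ diffusion approximation, as opposed to the $o(\eps)$ needed for Theorem \ref{thm:generator-transfer}, enlarges the function class on which the generator identity holds: one can now test $\cL$ against limits of Lipschitz functions with Lipschitz constants diverging like $\eps^{-1/2}$ rather than only against fixed Lipschitz test functions.
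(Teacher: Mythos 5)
Your proof is correct and follows essentially the same route as the paper: the same four-term telescoping decomposition inserting $\xi_\eps$, Jensen plus equal marginals for the two approximation-error terms, the Talagrand/coupling bound from Theorem \ref{thm:generator-transfer} applied to $\xi_\eps$ for the middle term (with the identical bookkeeping of $\sqrt{\eps}\norm{\xi_\eps}_{\mathrm{Lip}}$ against $\eps^{-1}H(\alpha^\eps|\beta^\eps)$), and the generator-domain definition for the last term.
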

\begin{proof}
Let $\alpha^{\eps}_x$ and $\beta^{\eps}_{x}$ denote the conditional expectation operators under $\alpha^{\eps}$ and $\beta^{\eps}$ given $X=x$. Following the argument of Theorem \ref{thm:generator-transfer}, for all $\eps > 0$
\begin{align}
    \frac{1}{\eps}\norm{\alpha^{\eps}_x \xi - \beta^{\eps}_{x}\xi}_{L^{2}(\mu)} \leq \sqrt{\eps}\norm{\xi_{\eps}}_{Lip}\left(\frac{1-e^{-\kappa \eps}}{\kappa \eps}\right)^{1/2} \cdot \left(\frac{1}{\eps^2}H(\alpha^{\eps}|\beta^{\eps})\right)^{1/2},
\end{align}
so the left hand side vanishes as $\eps \downarrow 0$ as $H(\alpha^{\eps}|\beta^{\eps}) = o(\eps^2)$. Next, recall that $\alpha^{\eps}$ and $\beta^{\eps}$ are both couplings of $\mu$ with itself. By Jensen's inequality,
\begin{align}
    \norm{\Exp{\alpha^{\eps}}[(\xi_{\eps}-\xi)(Y)|X=x]}_{L^2(\mu)}, \norm{\Exp{\beta^{\eps}}[(\xi_{\eps}-\xi)(Y)|X=x]}_{L^2(\mu)} \leq \norm{\xi_{\eps}-\xi}_{L^2(\mu)}, 
\end{align}
so the terms on the left hand side are $o(\eps)$. Altogether then, observe that
\begin{align*}
    \frac{1}{\eps}\left(\alpha_x^{\eps} \xi_{\eps} - \xi\right) &= \frac{1}{\eps}(\alpha_x^{\eps}\xi-\alpha^{\eps}_{x}\xi_{\eps})+ \frac{1}{\eps}\left(\alpha^{\eps}_x \xi_{\eps}-\beta^{\eps}_x \xi_{\eps}\right) + \frac{1}{\eps}(\beta^{\eps}_x \xi_{\eps} - \beta^{\eps}_{x}\xi)+\frac{1}{\eps}\left(\beta^{\eps}_x \xi - \xi\right). 
\end{align*}
The first three terms on the right hand side vanish in $L^2(\mu)$ as $\eps \downarrow 0$, and the rightmost term converges in $L^2(\mu)$ to $\cL \xi$ as $\eps \downarrow 0$. 
\end{proof}

While this extension is immaterial when $M$ is compact, it does extend the function class on which the generator transfer holds when $M$ is noncompact. In particular, we obtain the following extension in the $\mathbb{R}^{d}$ setting of \cite[Theorem 3]{AHMP25}.
\begin{corollary}[Extension on $\mathbb{R}^{d}$]
    Let $\mu = e^{-U} \in \cP_2(\mathbb{R}^{d})$ satisfy Assumption \ref{assumption:diffusions}. Additionally, assume that $\norm{\nabla \cU_{\mu}}^2$ has polynomial growth and $\mu$ has subexponential tails (see Proposition \ref{prop:h2-int-argument}). Let $\pi^{\eps}$ denote the $\eps$-static $\Schro$ bridge from $\mu$ to itself computed with respect to Euclidean Wiener reference measure. Then, for all for all polynomial $\xi \in C^{\infty}(\mathbb{R}^{d})$ it holds in $L^2(\mu)$ that
    \begin{align*}
        \lim\limits_{\eps \downarrow 0} \frac{1}{\eps}\left(\Exp{\pi^{\eps}}[\xi(Y)|X=x] - \xi(x)\right) &= \cL\xi(x)
    \end{align*}
\end{corollary}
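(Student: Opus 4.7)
The strategy is to combine Theorem~\ref{thm:sym-rel-ent}, which provides an $o(\eps^2)$ symmetric relative entropy approximation of $\pi^\eps$ by $\ell_\eps := \mathrm{Law}(X_0, X_\eps)$ (the time-$\eps$ joint law of the Langevin diffusion stationary at $\mu$), with the extended generator-transfer Proposition~\ref{prop:generator-extn}, after replacing the non-Lipschitz polynomial $\xi$ by a carefully chosen Lipschitz truncation that preserves enough regularity near the origin while controlling growth at infinity.

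The first step is to verify that Theorem~\ref{thm:sym-rel-ent} applies and yields the $o(\eps^2)$ rate. Since the Euclidean Wiener reference is $\diffref = \vol$, one has $U_{\diffref} \equiv 0$, so the $\cU$-integrability assumption \eqref{eq:cu-integrability} reduces to controlling $\int_0^1 \Exp{\mu_t^\eps}\norm{\nabla \cU_\mu}^2 \, dt$ and follows from polynomial growth of $\norm{\nabla \cU_\mu}^2$ together with subexponential tails of $\mu$, via Proposition~\ref{prop:h2-int-argument}. The convergence $\int_0^1 I(\mu_t^\eps|\vol)\,dt \to I(\mu|\vol)$ follows from Proposition~\ref{prop:the-one-about-fi} case~(ii), because $U_{\diffref} = 0$ is (trivially) bounded. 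Consequently Theorem~\ref{thm:sym-rel-ent} delivers $H(\pi^\eps|\ell_\eps) + H(\ell_\eps|\pi^\eps) = o(\eps^2)$.

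For the second step, fix a polynomial $\xi$ of degree $k$. For $k \leq 1$ the function $\xi$ is globally Lipschitz and Theorem~\ref{thm:generator-transfer} applies directly. For $k \geq 2$, fix a bump function $\chi \in C_c^\infty(\mathbb{R}^d)$ with $\chi \equiv 1$ on $B(0,1)$ and $\chi \equiv 0$ off $B(0,2)$, and set
\begin{equation*}
\xi_\eps(x) := \chi(x/R_\eps)\,\xi(x), \qquad R_\eps := \eps^{-1/(2(k-1))}.
\end{equation*}
Then $\xi_\eps \in C_c^\infty(\mathbb{R}^d) \subset D(\cL)$, and a product-rule estimate on $B(0,2R_\eps)$ (where $|\xi| \lesssim R_\eps^k$ and $|\nabla \xi| \lesssim R_\eps^{k-1}$) gives $\norm{\xi_\eps}_{\mathrm{Lip}} \lesssim R_\eps^{k-1}$, so $\sup_\eps \sqrt{\eps}\,\norm{\xi_\eps}_{\mathrm{Lip}} < +\infty$. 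The subexponential tails of $\mu$ give $\mu(\{|x| > R\}) \lesssim e^{-c R^\alpha}$ for some $c, \alpha > 0$, hence
\begin{equation*}
\norm{\xi_\eps - \xi}_{L^2(\mu)}^2 \leq \int_{|x| > R_\eps} |\xi(x)|^2 \, \mu(dx) \lesssim R_\eps^{2k}\, e^{-c' R_\eps^\alpha},
\end{equation*}
which decays faster than any power of $\eps$ as $R_\eps \to \infty$. In particular $\norm{\xi_\eps - \xi}_{L^2(\mu)} = o(\eps)$.

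With these ingredients in hand, Proposition~\ref{prop:generator-extn} applies with $\alpha^\eps = \pi^\eps$, $\beta^\eps = \ell_\eps$, and the truncated sequence $(\xi_\eps)$, provided the limit $\xi$ itself lies in $D(\cL)$. The main obstacle I anticipate is precisely this last membership: for polynomial $\xi$, subexponential tails of $\mu$ immediately give $\xi, \Delta \xi \in L^2(\mu)$, but one must additionally establish $\nabla U_\mu \cdot \nabla \xi \in L^2(\mu)$, which requires extracting polynomial growth of $\nabla U_\mu$ from the hypothesized polynomial growth of $\norm{\nabla \cU_\mu}^2$ and $C^3$ regularity of $U_\mu$ coming from Assumption~\ref{assumption:diffusions}. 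Once this is in place the simultaneous balancing of the truncation radius $R_\eps$—needing $\sqrt{\eps}\,R_\eps^{k-1} = O(1)$ while $e^{-c R_\eps^\alpha} = o(\eps^2)$—is automatic with the choice $R_\eps = \eps^{-1/(2(k-1))}$, and the conclusion of the corollary follows.
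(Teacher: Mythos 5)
Your proposal follows the same two-step strategy as the paper: verify the $o(\eps^2)$ diffusion approximation via Proposition~\ref{prop:h2-int-argument} and Proposition~\ref{prop:the-one-about-fi}, then feed a truncated polynomial into Proposition~\ref{prop:generator-extn}. The main difference is cosmetic: you truncate with a rescaled bump $\chi(x/R_\eps)$, so $\norm{\nabla(\chi(\cdot/R_\eps))}_\infty = O(1/R_\eps)$ and the dominant Lipschitz contribution is $R_\eps^{k-1}$, whereas the paper uses a family $(\chi_R)$ with $\chi_R \equiv 1$ on $B(0,R)$, supported on $B(0,R{+}1)$, and $\sup_R \norm{\nabla \chi_R}_\infty < \infty$, so the dominant contribution is $(\nabla\chi_R)\,\xi = O(R^p)$. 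This forces the paper to take $R = \eps^{-1/p^2}$ rather than your $\eps^{-1/(2(k-1))}$; both choices satisfy $\sqrt{\eps}\,\norm{\xi_\eps}_{\mathrm{Lip}} = O(1)$, and in both cases $\norm{\xi_\eps - \xi}_{L^2(\mu)}$ decays super-polynomially by the subexponential tail bound, so the balancing is not delicate.

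Your flagged concern about $\xi \in D(\cL)$ is legitimate and is the one place where your write-up is actually more careful than the paper. The paper asserts ``as $\mu$ is subexponential and $I(\mu|\mathrm{Leb}) = \Exp{\mu}\norm{\nabla U}^2 < +\infty$, it holds that $\cL\xi \in L^2(\mu)$,'' but Cauchy--Schwarz only gives $|\nabla U \cdot \nabla\xi|^2 \leq \norm{\nabla U}^2\norm{\nabla\xi}^2$, and squaring-integrability of the product requires either $\norm{\nabla U} \in L^4(\mu)$ or polynomial growth of $\norm{\nabla U}$ --- neither of which follows automatically from $I(\mu|\mathrm{Leb}) < \infty$ plus subexponential tails. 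Your suggested route of extracting polynomial growth of $\norm{\nabla U_\mu}$ from polynomial growth of $\norm{\nabla\cU_\mu}^2$ also does not close immediately: it gives polynomial growth of $\cU_\mu$, hence $\norm{\nabla U_\mu}^2 = 8\cU_\mu + 2\Delta U_\mu$, but the CD condition supplies only a lower bound on $\Delta U_\mu$, not the upper bound needed. So the membership $\xi \in D(\cL)$ deserves a supplementary hypothesis (e.g.\ $\Exp{\mu}[\norm{\nabla U_\mu}^4] < \infty$, as is assumed elsewhere in Section~\ref{sec:diff-marg}, or polynomial growth of $\norm{\nabla U_\mu}$). Since the same imprecision is present in the paper's proof, this does not make your argument worse than the target; but you are right to mark it as the load-bearing unverified step.
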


\begin{proof}
    By Proposition \ref{prop:h2-int-argument}, \eqref{eq:o-eps-2-limit} holds. As linear functions are Lipshitz and thus included in Theorem \ref{thm:generator-transfer}, let $\xi$ be a polynomial of degree $\geq 2$. As $\mu$ is subexponential (so has moments of all order) and $I(\mu|\text{Leb}) = \Exp{\mu}\norm{\nabla U}^2 < +\infty$, it holds that $\cL \xi \in L^2(\mu)$ and thus $\xi \in D(\cL)$. 

    Let $\left(\chi_R, R > 1\right) \subset C_c^{\infty}(\mathbb{R}^{d})$ be a family of smooth cutoff functions such that  $\chi_R(x) \equiv 1$ on $B(0,R)$, $\chi_R \equiv 0$ on $\mathbb{R}^{d}\setminus B(0,R+1)$, and $\sup_{R > 1} \norm{\nabla\chi_R}_{\infty} < +\infty$.
    Let $p \geq 2$ denote the degree of $\xi$ and define $\eta(\eps) = \eps^{-\frac{1}{p^2}}$ for $\eps < 1$. Set $\xi_{\eps} := \chi_{\eta(\eps)}\xi \in C_c^{\infty}(\mathbb{R}^{d})$, there exists some $C > 0$ (depending on $p$ and the coefficients of $\xi$) such that for any $x \in \mathbb{R}^{d}$
    \begin{align*}
        \norm{\nabla (\chi_{\eta(\eps)}\xi)(x)} &\leq \sup\limits_{ B(0,\eta(\eps)+1)}\left(\norm{(\nabla \chi_{\eps}) \xi}  + \norm{\nabla \xi} \right)\leq C\left(\eta(\eps)^{p}+\eta(\eps)^{p-1}\right) \leq 2C\eps^{-\frac{1}{p}}.
    \end{align*}
    The Lipshitz constant of $\xi_{\eps}$ is less than or equal to $2C\eps^{-\frac{1}{p}}$, so as $p \geq 2$ the Lipshitz condition of Proposition \ref{prop:generator-extn} on the $(\xi_{\eps}, \eps > 0)$ is satisfied. To apply Proposition \ref{prop:generator-extn}, it then remains to show that $\norm{\xi_{\eps}-\xi}_{L^2(\mu)} = o(\eps)$. By Holder's inequality,
    \begin{align*}
        \norm{\xi_{\eps}-\xi}_{L^2(\mu)}^2 &\leq (\Exp{\mu}[\xi^2])^{1/2}\mu(\mathbb{R}^{d}\setminus B(0,\eta(\eps)))^{1/2}.
    \end{align*}
    As $\xi^2$ is a polynomial, the desired convergence rate follows from the subexponential tail bound. That is, there is $C, K > 0$ such that $\mu(\mathbb{R}^{d}\setminus B(0,\eta(\eps))) \leq C\exp\left(-K\eta(\eps)\right)$, which decays faster as $\eps \downarrow 0$ than any polynomial power of $\eps$. 
\end{proof}

\subsection{Next Order Potential Convergence}
From \cite[Theorem 1.1]{chiarini2022gradient}, $(\eps \nabla_g \log a^{\eps}, \eps >0)$ vanishes as $\eps \downarrow 0$. Using the diffusion approximation, we are able to identify the next order in $\eps$ and uncover the role of the manifold geometry in the limit.
\begin{theorem}\label{thm:score-function}
Let $(M,g)$ satisfy Assumption \ref{assumption:manifold} and $\mu \in \cP_2(M)$ satisfy Assumption \ref{assumption:diffusions}. In the case of (H2), \textbf{assume in addition} that $\Exp{\mu}[\norm{\nabla_g U_{\mu}}_g^4] < +\infty$ and the injectivity radius admits a positive global lower bound, that is, $\inf\limits_{x \in M} \mathrm{inj}(x) > 0$.

Let $\pi^{\eps}$ denote the $\eps$-static $\Schro$ bridge from $\mu \in \cP_2(M)$ to itself computed with the manifold Brownian motion as reference measure. Let $a^{\eps}$ denote the $\Schro$ potential as in \eqref{eq:intro-potent-defn}. It holds in $L^{2}(TM,\mu)$ that
\begin{align}\label{eq:mainfold-potential}
    \lim\limits_{\eps \downarrow 0}\nabla_g \log a^{\eps}(x) = -\lim\limits_{\eps \downarrow 0}\left(\frac{1}{\eps}\int_{M} \log_{x}(z) \pi^{\eps}(dz|x)\right) = \frac{1}{2}\nabla_g \log \left(\frac{d\mu}{d\vol}\right)(x).
\end{align}
In the case of (H2), fix a global chart so that the following conditional expectation makes sense. Then on this chart it holds in $L^2(\mu)$, under no assumption on the cutlocus, that
    \begin{align}\label{eq:manifold-bp}
        \lim\limits_{\eps \downarrow 0} \frac{1}{\eps}\left(\Exp{\pi_{\eps}}[Z|X=x] - x\right) \to \frac{1}{2}g^{ij}(x)\frac{\partial}{\partial x^{j}}\left(\log \frac{d\mu}{d\vol}\right)(x)-\frac{1}{2}g^{ij}(x)\Gamma_{ij}^{k}(x). 
    \end{align}
In the case of a Hessian manifold (i.e.\ $g = \nabla^2 \varphi$), this further reduces to
\begin{align}\label{eq:hess-manifold-bp}
    \lim\limits_{\eps \downarrow 0} \frac{1}{\eps}\left(\Exp{\pi_{\eps}}[Z|X=x] - x\right) \to \frac{1}{2}g^{ij}(x)\frac{\partial}{\partial x^{j}}\left(\log \frac{d\mu}{d\vol} - \frac{1}{2}\log g \right)(x). 
\end{align}
\end{theorem}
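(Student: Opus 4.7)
The plan is to write $\nabla_g \log a^{\eps}$ as a conditional expectation of the log-heat-kernel gradient and then invoke the small-time expansion of Proposition \ref{prop:heat-kernel-asymp} together with the diffusion approximation of Theorem \ref{thm:sym-rel-ent} and the generator-transfer of Theorem \ref{thm:generator-transfer}. Because the reference is manifold Brownian motion, $R_{0\eps}(dxdz) = r_{\eps}(x,z)\vol(dx)\vol(dz)$; the product decomposition $d\pi^{\eps}/dR_{0\eps}(x,z) = a^{\eps}(x)a^{\eps}(z)$ together with the first-marginal identity $e^{-U_{\mu}(x)} = a^{\eps}(x)\int a^{\eps}(z)r_{\eps}(x,z)\vol(dz)$ gives, on taking $\nabla_g$ of the logarithm,
\begin{equation}\label{eq:plan-identity}
\nabla_g \log\!\left(\frac{d\mu}{d\vol}\right)(x) = \nabla_g \log a^{\eps}(x) + \Exp{\pi^{\eps}}\!\left[\nabla_x \log r_{\eps}(x,Z) \,\middle|\, X = x\right].
\end{equation}
Substituting \eqref{eq:grad-log-heat-ker-exp}, the statement \eqref{eq:mainfold-potential} reduces to three $L^{2}(\mu)$-limits: (i) $\eps^{-1}\Exp{\pi^{\eps}}[\log_{x}(Z)\mid X=x] \to \tfrac{1}{2}\nabla_g \log(d\mu/d\vol)(x)$; (ii) $\Exp{\pi^{\eps}}[\nabla_x \log c_{0}(x,Z)\mid X=x]\to 0$; and (iii) the $O(\eps)$ remainder vanishes.

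Items (ii) and (iii) I handle by localizing to a tubular neighborhood of radius $r_0 < \inf_{x}\mathrm{inj}(x)$ about the diagonal, where $\nabla_x \log c_0$ is smooth with $\nabla_x \log c_0(x,z)|_{z=x}=0$ and the remainder $R(\eps,x,\cdot)$ is uniformly bounded on compacts (Proposition \ref{prop:heat-kernel-asymp}); Proposition \ref{prop:almost-ldp-sb} then controls the complement, with case (H2) using the fourth-moment hypothesis $\Exp{\mu}[\norm{\nabla_g U_\mu}_g^4]<+\infty$ with $p = 4$ in \eqref{eq:pi-eps-noncompact} to obtain $O(\eps^2)$ tail decay. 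For the main item (i), I first verify its analogue on the diffusion coupling $\ell_\eps$: applying It\^{o}'s formula to a smoothly cut off normal-coordinate function $\phi_{x}^{i}$ centered at $x$ and using that the Christoffel symbols vanish at the origin of normal coordinates, the generator $\cL$ of \eqref{eq:sec3-target-diff} acts as $\cL\phi_{x}^{i}(x) = \tfrac{1}{2}(\nabla_g \log(d\mu/d\vol)(x))^{i}$, so that $\eps^{-1}\Exp{\ell_\eps}[\log_{x}(Z)\mid X=x] \to \tfrac{1}{2}\nabla_g \log(d\mu/d\vol)(x)$. Since Theorem \ref{thm:sym-rel-ent} yields $H(\pi^{\eps}|\ell_{\eps}) = o(\eps^2)$, Theorem \ref{thm:generator-transfer} carries this limit over to $\pi^{\eps}$, and inserting (i)--(iii) into \eqref{eq:plan-identity} produces \eqref{eq:mainfold-potential}.

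For the barycentric projection \eqref{eq:manifold-bp} in a fixed global chart under (H2), I use the geodesic Taylor expansion
\[
z^{k} - x^{k} = (\log_{x} z)^{k} - \tfrac{1}{2}\Gamma_{ij}^{k}(x)(\log_{x} z)^{i}(\log_{x} z)^{j} + O(|\log_{x} z|^{3}),
\]
divide by $\eps$ and take the conditional expectation under $\pi^{\eps}(\cdot\mid x)$: the linear term converges to $\tfrac{1}{2}g^{kj}(x)\partial_{j}\log(d\mu/d\vol)(x)$ by \eqref{eq:mainfold-potential}, the conditional second moment $\eps^{-1}\Exp{\pi^{\eps}}[(\log_{x} Z)^{i}(\log_{x} Z)^{j}\mid X=x]\to g^{ij}(x)$ (by the same diffusion-approximation/It\^{o} argument applied to quadratic normal-coordinate polynomials, whose infinitesimal variance at $x$ equals $g^{ij}(x)$) produces the $-\tfrac{1}{2}g^{ij}\Gamma_{ij}^{k}$ correction, and the cubic remainder is $O(\eps^{1/2})$. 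For the Hessian specialization \eqref{eq:hess-manifold-bp}, substitute $\Gamma_{ij}^{k} = \tfrac{1}{2}g^{k\ell}\partial^{3}_{\ell ij}\varphi$ from \eqref{eq:cris-sym-primal} and apply Jacobi's identity $\partial_{\ell}\log \det g = g^{ij}\partial^{3}_{\ell ij}\varphi$ (using $g_{ij} = \partial^{2}_{ij}\varphi$) to get $g^{ij}\Gamma_{ij}^{k} = \tfrac{1}{2}g^{k\ell}\partial_{\ell}\log g$, which combines with \eqref{eq:manifold-bp} as claimed. The principal obstacle I anticipate is the $x$-dependence of the map $z\mapsto \log_{x}(z)$: Theorem \ref{thm:generator-transfer} is stated for a fixed Lipshitz function, so the transfer from $\ell_\eps$ to $\pi^{\eps}$ must be made uniform in $x$ over the $r_0$-tube, and the sharper $o(\eps^{2})$ rate of Theorem \ref{thm:sym-rel-ent} (rather than the bare $o(\eps)$ needed by Theorem \ref{thm:generator-transfer}) supplies the margin required to absorb the Lipshitz constants of the cutoff normal-coordinate functions, which are only uniformly controlled on the tube.
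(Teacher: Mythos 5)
Your plan assembles the right ingredients — the heat-kernel expansion of Proposition \ref{prop:heat-kernel-asymp}, the diffusion approximation of Theorem \ref{thm:sym-rel-ent}, the generator transfer of Theorem \ref{thm:generator-transfer}, and essentially the same identity for $\nabla_g\log a^{\eps}$ as the paper's Step 1 — but it misses the structural device the paper uses to close the $L^2(\mu)$ convergence, and this is a genuine gap, not a detail.

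The paper does \emph{not} prove the three limits you label (i)--(iii) in $L^2(\mu)$. It proves them as $\mu$-almost-sure limits along a subsequence (for each fixed $x$, the tube $B(x,c/2)$ is compact, so the local heat-kernel bounds apply, and the pointwise bound from Theorem \ref{thm:generator-transfer} vanishes because $H(\pi^{\eps}_x\mid\ell^{\eps}_x)\to 0$ a.s.\ along a subsequence). It then upgrades to strong $L^2(\mu)$ convergence by an independent mechanism: (a) the corrector estimate \cite[Proposition 2.4]{chiarini2022gradient} together with Corollary \ref{cor:ct-cost-exp} gives $\limsup_{\eps}\|\nabla_g\log a^{\eps}\|_{L^2(\mu)}^2\le\frac14\|\nabla_g\log(d\mu/d\vol)\|^2_{L^2(\mu)}$; (b) the a.s.\ subsequential convergence identifies the weak limit; (c) Fatou gives the matching $\liminf$ of norms, so weak convergence plus convergence of norms yields strong convergence, and a sub-subsequence argument removes the subsequence. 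Your proposal never mentions the corrector estimates or the Fatou/weak-compactness step, and without it your plan must prove each of (i)--(iii) directly in $L^2(\mu)$. Under (H2) that requires bounds \emph{uniform in $x$} over the noncompact tube — on $\nabla_x\log c_0(x,\cdot)$, on the remainder $R(\eps,x,\cdot)$, and on $\|\chi_x\log_x\|_{\mathrm{Lip}}$ — but Proposition \ref{prop:heat-kernel-asymp} only gives compact uniformity, and Assumption \ref{assumption:manifold}~(H2) bounds $g$ and its first derivatives, which does not control sectional curvature and hence does not control the Jacobian of $\exp_x$ uniformly. You acknowledge this obstacle at the end, but the claim that the $o(\eps^2)$ rate ``supplies the margin required to absorb the Lipschitz constants'' presupposes exactly the uniform-in-$x$ Lipschitz bound you have not established; the rate helps only once that bound is in hand. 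The paper's a.s.-plus-Fatou route sidesteps all of this.

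A second, smaller deviation: for the barycentric projection \eqref{eq:manifold-bp} your route via the geodesic Taylor expansion requires a separate second-moment limit $\eps^{-1}\Exp{\pi^{\eps}}[(\log_xZ)^i(\log_xZ)^j\mid X=x]\to g^{ij}(x)$, which is not in the paper and would require applying the generator transfer to quadratic, non-Lipschitz functions (hence Proposition \ref{prop:generator-extn} with tail estimates). The paper avoids this entirely by applying Theorem \ref{thm:generator-transfer} directly to the coordinate functions $\xi_i(x)=x_i$, which are Lipschitz under (H2), and reading off $\cL\xi_i$. Your Hessian-manifold reduction at the end matches the paper's computation and is fine.
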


\begin{remark}
    Fix $x \in M$ and let $C_{x} \subset M$ denote the cutlocus of $x$. For $z \in M \setminus C_x$, $\log_x z \in T_{x}M$ always exists 
    and $\vol(C_x) = 0$ \cite[Theorem 10.34]{riemannian-manifolds-lee}.
    Thus, the correct way to interpret the first term on the right hand side of \eqref{eq:mainfold-potential} is the following: consider the $\vol$-a.e.\ defined function $z \mapsto \log_{x}(z)$. Since $\pi^{\eps}(\cdot|x) << \vol$, the quantity $\int_{M} \log_{x}(z) \pi^{\eps}(dz|x)$ is then a well-defined element of $T_{x}M$. 
\end{remark}

\begin{remark}
    From \cite[Section 3]{brewin-expansions-09}, the Riemannian log has the following local expansion 
    \begin{align*}
        \log_x(z) = (z-x) + \frac{1}{2}\Gamma_{ij}^{k}(x)(z-x)^i(z-x)^k + O(\norm{z-x}^3). 
    \end{align*}
    Thus, the extra term arising in \eqref{eq:manifold-bp} compared to \eqref{eq:mainfold-potential} is due to non-zero curvature of $M$.  
\end{remark}

We quickly recount how the results of Section \ref{sec:diff-approx} apply to the setting in Theorem \ref{thm:score-function}. Let $\ell_{\eps}$ denote the diffusion approximation to $\pi^{\eps}$ developed in Theorem \ref{thm:sym-rel-ent}. Then Theorem \ref{thm:sym-rel-ent} and Proposition \ref{prop:the-one-about-fi} (as $U_{\diffref} = 0$) establish \eqref{eq:o-eps-2-limit}. Hence, for $c > 0$ and $N = \{(x,z) \in M \times M: d(x,z) > c\}$, Proposition \ref{prop:almost-ldp-sb} gives that, under both (H1) and (H2), there is some constant $K > 0$, depending on $c$, such that
    \begin{align}\label{eq:vol-estimate}
        \pi_{\eps}((M \times M)\setminus N) \leq K\eps^2. 
    \end{align}
We now proceed to the proof of Theorem \ref{thm:score-function}.
\begin{proof}
    The $\Schro$ bridge is a probability measure on $M \times M$ that we write as
    \begin{align}
        \frac{d\pi^{\eps}}{d(\vol \otimes \vol)}(x,z) &= a^{\eps}(x)a^{\eps}(z)p_{\eps}(x,z),
    \end{align}
    where $p_{\eps}(x,z)$ is the transition density of the reversible manifold Brownian motion process with respect to $\vol$, that is, $dW_{0\eps}/d(\vol \otimes \vol)$ where $W_{0\eps} := \mathrm{Law}(B_0^{M},B_\eps^{M})$ with $B_0^{M} \sim \vol$. We note that this is a change from the notation of previous sections to emphasize that we only considering Wiener reference for this result. 
    
    \textbf{Outline of argument.} The corrector estimates in \cite[Proposition 2.4]{chiarini2022gradient} establish 
    \begin{align}\label{eq:corrector-estimate}
        \norm{\nabla_g \log a^{\eps}}^2_{L^2(\mu)} &\leq \frac{2}{E_{\kappa_{\diffref}}(\eps)}\left(H(\pi^{\eps}|W_{0\eps})-H(\mu|\vol)\right),
    \end{align}
    where $E_{\kappa_{\vol}}(\eps) = \int_0^{\eps} \exp(\kappa_{\vol}t)dt$ with $\kappa_{\vol} \in \mathbb{R}$ as defined in Assumption \ref{assumption:diffusions}. Observe the different scaling factors due to differing conventions. From Corollary \ref{cor:ct-cost-exp}, it then holds that
    \begin{align}\label{eq:upper-bdd-l2-norm}
        \limsup\limits_{\eps \downarrow 0} \norm{\nabla_g \log a^{\eps}}^2_{L^2(\mu)} &\leq 2 \cdot \frac{1}{8}I(\mu|\vol) = \frac{1}{4}\norm{\nabla_g \log \left(\frac{d\mu}{d\vol}\right)}^2_{L^2(\mu)}
    \end{align}
    Hence, the collection $(\nabla \log a^{\eps}, \eps > 0)$ is $L^2(\mu)$-bounded and thus weakly $L^2(\mu)$-compact. The major claim in this proof is the following
    \begin{align}\label{eq:major-claim}
        \text{Any subsequence of $\nabla_g \log a^{\eps}$ has a further subsequence converging $\mu$-a.s.\ to $\frac{1}{2}\nabla_g \log\left(\frac{d\mu}{d\vol}\right)$}.
    \end{align}
    Take for granted that \eqref{eq:major-claim} holds. Fix some subsequence $(\nabla_g \log a^{\eps_{n}}, n \geq 1)$ such that $\eps_n \to 0$ as $n \to +\infty$. From \eqref{eq:major-claim}, extract a subsequence such that the desired $\mu$-a.s.\ pointwise convergence happens. Denote this subsequence $(\nabla_g \log a^{\eps_{n_{k}}}, k \geq 1)$, then thanks to the bound from \eqref{eq:corrector-estimate}, it holds that the sequence converges weakly in $L^2(\mu)$ to $\frac{1}{2} \nabla_g \log \left(\frac{d\mu}{d\vol}\right)$ \cite[Chapter 6, Exercise 20]{folland-measure-theory}. Moreover, by Fatou's Lemma
    \begin{align*}
        \frac{1}{4}\norm{\nabla_g \log\left(\frac{d\mu}{d\vol}\right)}^2_{L^2(\mu)} &\leq \liminf\limits_{k \to +\infty} \norm{\nabla_g \log a^{\eps_{n_{k}}}}^2_{L^2(\mu)},
    \end{align*}
    which is a matching lower bound to \eqref{eq:upper-bdd-l2-norm}. Altogether then, it holds that $(\nabla_g \log a^{\eps_{n_{k}}}, k \geq 1)$ converges weakly and has convergence of norms to $\frac{1}{2}\nabla_g \log \left(\frac{d\mu}{d\vol}\right)$. Hence, the convergence holds strongly in $L^2(\mu)$ \cite[Exercise 5.19]{brezis-fa11}. We have shown that any subsequence of $(\nabla_g \log a^{\eps}, \eps > 0)$ admits a further subsequence converging strongly in $L^2(\mu)$ to $\frac{1}{2}\nabla_g \log \left(\frac{d\mu}{d\vol}\right)$, so $L^2(\mu)$ convergence holds along the entire sequence as $\eps \downarrow 0$. 

    \textbf{Proof of $\mu$-a.s.\ Subsequential Limit.} The rest of the proof will now focus on establishing \eqref{eq:major-claim}. We break this proof into several steps.

    \textbf{Step 1:} It holds for all $\eps > 0$ that
    \begin{align}\label{eq:a-eps-v1}
        \nabla_g \log a^{\eps}(x) &= -\int_{M} \nabla_{x} \log p_{\eps}(x,z)\pi^{\eps}(dz|x) + \nabla_g \log\left(\frac{d\mu}{d\vol}\right)(x),
    \end{align}
    where $\nabla_x$ denotes the Riemannian gradient computed in the $x$ variable. To see this, fix $x \in M$. The conditional density writes as
    \begin{align}\label{eq:cond-dens-pi-eps}
        \frac{d\pi^{\eps}(\cdot|x)}{d\vol}(z) &= a^{\eps}(x)a^{\eps}(z)p_{\eps}(x,z)\left(\frac{d\mu}{d\vol}(x)\right)^{-1}.
    \end{align}
    Take the log and then gradient with respect to $x$. Rearranging the expression gives
    \begin{align}
        \nabla_g \log a^{\eps}(x) &= - \nabla_{x} \log p_{\eps}(x,z)+\nabla_g \log \left(\frac{d\mu}{d\vol}\right)(x) + \nabla_{x} \log \left(\frac{d\pi^{\eps}(\cdot|x)}{d\vol} (z)\right). 
    \end{align}
    Observe that all the above quantities in the above identity are elements in the tangent space $T_{x}M$. It holds that
    \begin{align}\label{eq:zero-exp-score-fnc}
        \int_{M} \nabla_{x} \log \left(\frac{d\pi^{\eps}(\cdot|x)}{d\vol} (z)\right) \pi^{\eps}(dz|x) = 0 \in T_xM. 
    \end{align}
    To see this, we argue as follows. First, observe that we are integrating a function valued on $T_x M$, which is a finite dimensional normed vector space. Hence, such an operation is well-defined. Let $F: M \times M \to [0,+\infty)$ be the function $F(x,z) = \frac{d\pi^{\eps}(\cdot|x)}{d\vol}(z)$. Fix any local system of coordinates about $x \in M$, say $(u^{1},\dots,u^{d})$. In this coordinate system, the integrand in \eqref{eq:zero-exp-score-fnc} writes as
    \begin{align*}
        \left.\nabla_{u} \log F(u,z)\right|_{u=x} &= \left(g^{ij}(x) \left.\partial_{u^{j}}\right|_{u=x}\log F(u,z)\right)\left.\frac{\partial}{\partial u^i}\right|_{x} = \left(g^{ij}(x)\frac{\left.\partial_{u^{j}}\right|_{u=x} F(u,z)}{F(x,z)}\right)\left.\frac{\partial}{\partial u^i}\right|_x \in T_{x}M. 
    \end{align*}
    We now integrate each coordinate individually. Fix $i \in \{1,\dots,d\}$, then the $i$th coordinate of \eqref{eq:zero-exp-score-fnc} becomes
    \begin{align*}
        \int_{M} g^{ij}(x)\frac{\left.\partial_{u^{j}}\right|_{u=x}F(u,z)}{F(x,z)}\pi^{\eps}(dz|x) &= g^{ij}(x) \left.\partial_{u^{j}}\right|_{u=x}\left(\int_{M} F(u,z) \vol(dz)\right) = g^{ij}(x) \partial_{u^{j}}1 = 0. 
    \end{align*}
    This establishes \eqref{eq:zero-exp-score-fnc}, so \eqref{eq:a-eps-v1} holds. Observe from \eqref{eq:cond-dens-pi-eps} that the smoothness necessary to interchange integral and derivative is present.

    \textbf{Step 2:} Let $c > 0$ denote the global injectivity radius of $M$. When $M$ is compact this is guaranteed to be positive \cite[Proposition 10.37]{riemannian-manifolds-lee}, otherwise we make this assumption when $M$ is non-compact.
    We claim that along any subsequence there is a further subsequence such that for $\mu$-a.s.\ $x \in M$
    \begin{align}\label{eq:claim-throw-away-set}
        \lim\limits_{\eps \downarrow 0} \int_{M} \norm{\nabla_x \log p_{\eps}(x,z) - \mathbbm{1}(B(x,c/2))(z) \cdot \frac{1}{\eps}\log_{x}(z)}_g \pi^{\eps}(dz|x) = 0.
    \end{align}
    Define the set
    \begin{align}
        N = \left\{(x,z) \in M \times M: d(x,y) \leq \frac{c}{2}\right\}.
    \end{align}
    Observe that $N \subset M \times M$ is closed and contained within the cutlocus. We claim that
    \begin{align}\label{eq:concentration-bound}
        \lim\limits_{\eps \downarrow 0} \int_{(M \times M) \setminus N} \norm{\nabla_{x} \log p_{\eps}(x,z)}_g \pi^{\eps}(dxdz)= 0. 
    \end{align}
    The key tool is the following bound on gradient of the log of the heat kernel, which furnishes $C > 0$ holding for all $x,z \in M$ and $\eps > 0$ small enough:
    \begin{align}\label{eq:log-heat-kernel-point-pdd}
        \norm{\nabla_x \log p_{\eps}(x,z)}_g \leq C\left(\frac{1}{\sqrt{\eps}}+\frac{d(x,z)}{\eps}\right).
    \end{align}
    Under (H1), this is for instance established in \cite[Theorem 2.1]{chen-log-heat-kernel-est-23}. Under (H2), the existence of such a $C > 0$ for all $x,z \in M$ and $\eps >0$ small enough follows from \cite[Theorem 1]{engoulatov-grad-log-heat-06} by the equivalence of the Riemannian and Euclidean distances.
    
    Applying this bound to \eqref{eq:concentration-bound} gives for all $\eps > 0$ small enough
    \begin{align*}
        \int_{(M \times M) \setminus N} \norm{\nabla_{x} \log p_{\eps}(x,z)}_g \pi^{\eps}(dxdz) &\leq C \left(\frac{1}{\sqrt{\eps}}\pi^{\eps}((M \times M) \setminus N)+\frac{1}{\eps}\int_{(M \times M) \setminus N} d(x,z)\pi^{\eps}(dxdz)\right).
    \end{align*}
    The first term vanishes as $\eps \downarrow 0$ by \eqref{eq:vol-estimate}. For the right-most term, observe from H\"{o}lder that
    \begin{align*}
        \int_{(M \times M) \setminus N} d(x,z)\pi^{\eps}(dxdz) &\leq \left(\int_{M} d^2(x,z)\pi^{\eps}(dxdz)\right)^{1/2}\sqrt{\pi^{\eps}((M \times M) \setminus N)}
    \end{align*}
    By \eqref{eq:vol-estimate} and the $\Was{2}$ convergence of $\pi^{\eps}$ to $(\Id,\Id)_{\#}\mu$, the RHS is $o(\eps)$.  
    This establishes \eqref{eq:concentration-bound}, which can be written as
    \begin{align}\label{eq:for-ptwise-1}
        \lim\limits_{\eps \downarrow 0} \int_{M} \left[\int_{M \setminus B(x,c/2)} \norm{\nabla_{x} \log p_{\eps}(x,z)}_g \pi^{\eps}(dz|x) \right] \mu(dx) = 0.
    \end{align}
    Next, we claim that there exists a further subsequence as $\eps \downarrow 0$ such that for $\mu$-a.e.\ $x \in M$  
    \begin{align}\label{eq:leading-order-limit}
        \lim\limits_{\eps \downarrow 0} \int_{B(x,c/2)} \norm{\nabla \log p_{\eps}(x,z) - \frac{1}{\eps}\log_x(z)}_g \pi^{\eps}(dz|x) = 0.
    \end{align}
    To see this, recall the expansion from Proposition \ref{prop:heat-kernel-asymp}, which holds for all $z \in B(x,c/2)$
    \begin{align}
        \nabla_{x} \log p_{\eps}(x,z) = \frac{1}{\eps}\log_{x} z + \nabla_{x} \log c_{0}(x,z) + \nabla_{x} \log\left(1+\eps\frac{R(\eps,x,z)}{c_{0}(x,z)}\right).
    \end{align}
    Moreover, recall that $c_0(x,\cdot)$ is smooth, strictly positive on $B(x,c/2)$ and its log gradient evaluated on the diagonal is $0$ (Proposition \ref{prop:heat-kernel-asymp}). Similarly, recall that $R$ is smooth and that there are uniform upper bounds on $R$ and its gradient for all $z \in B(x,c/2)$ and small enough $\eps$. In particular, let $\ell_{\eps}$ denote the joint distribution of the diffusion approximation to $\pi^{\eps}$ developed in Theorem \ref{thm:sym-rel-ent}. As its conditional distributions $\ell_{\eps}(\cdot|x)$ converge weakly to $\delta_x$ as $\eps \downarrow 0$ for all $x \in M$, it holds from the above discussion that
    \begin{align}\label{eq:heat-leading-order-for-diff}
        &\lim\limits_{\eps \downarrow 0}\int_{B(x,c/2)} \norm{\nabla_x \log p_{\eps}(x,z) - \frac{1}{\eps}\log_x(z)}_g \ell_{\eps}(dz|x)\\
        &= \lim\limits_{\eps \downarrow 0}\int_{B(x,c/2)} \norm{\nabla_{x} \log c_{0}(x,z) + \nabla_{x} \log\left(1+\eps\frac{R(\eps,x,z)}{c_{0}(x,z)}\right)}_g \ell_{\eps}(dz|x) = 0. 
    \end{align}
    We now pass this fact to the conditional distributions of $\pi^{\eps}$. By the variational characterization of total variation and Pinsker's inequality, for each $x \in M$ there are constants $C_1, C_2 > 0$ such that
    \begin{align*}
        \abs{\int_{B(x,c/2)} \norm{\nabla_x \log p_{\eps}(x,z) - \frac{1}{\eps}\log_x(z)}_g (\pi^{\eps}(dz|x)-\ell_{\eps}(dz|x))} &\leq C_1 d_{TV}(\pi^{\eps}(\cdot|x),\ell_{\eps}(\cdot|x)) \\
        &\leq C_2 \sqrt{H(\pi^{\eps}(\cdot|x)|\ell_{\eps}(\cdot|x))}
    \end{align*}
    As $\Exp{\mu}[H(\pi^{\eps}(\cdot|x)|\ell_{\eps}(\cdot|x))] = H(\pi^{\eps}|\ell_{\eps})$, by Theorem \ref{thm:sym-rel-ent} there is a subsequence as $\eps \downarrow 0$ on which $H(\pi^{\eps}(\cdot|x)|\ell_{\eps}(\cdot|x)) \to 0$ as $\eps \downarrow 0$ for $\mu$-a.s.\ $x \in M$. This fact along with \eqref{eq:heat-leading-order-for-diff} establishes the existence of a subsequence along which for $\mu$-a.s. $x \in M$ \eqref{eq:leading-order-limit} holds. It then follows from \eqref{eq:for-ptwise-1} and \eqref{eq:leading-order-limit} that \eqref{eq:claim-throw-away-set} holds. 

    \textbf{Step 3:} Let $(X_t, t \geq 0)$ denote the diffusion approximation to $\pi^{\eps}$ developed in Theorem \ref{thm:sym-rel-ent}, then we claim for all $x \in M$ that
    \begin{align}\label{eq:ptwise-limit}
        \lim\limits_{\eps \downarrow 0} \frac{1}{\eps}\Exp{x}\left[\log_x X_{\eps}\right] = \frac{1}{2}\nabla_g \log\left(\frac{d\mu}{d\vol}\right)(x).  
    \end{align}
    Let $T = \inf\{t > 0: d(X_t,x) > c/2\}$ and fix a normal coordinate system about $x$, on which  $\log_{x}z = z-x$. In this coordinate system, the SDE for the diffusion can be written as
    \begin{align*}
        X_{\eps \wedge T} - x = \int_{0}^{\eps \wedge T} \left(\frac{1}{2}\nabla_g \log\left(\frac{d\mu}{d\vol}\right)(X_t) - \frac{1}{2}g^{ij}(X_t)\Gamma_{ij}^k(X_t)\right)dt + \int_0^{\eps \wedge T}g^{-1/2}(X_t)dW_t,  
    \end{align*}
    where $(W_t, t \geq 0)$ is $d$-dimensional Euclidean Brownian motion. As $B\left(x,\frac{c}{2}\right) \subset M$ is compact, the stochastic integral term is a martingale, giving that
    \begin{align}
        \frac{1}{\eps}\Exp{x}[\log_x X_{\eps \wedge T}] &= \frac{1}{\eps}\int_0^{\eps \wedge T} \Exp{x}\left[\frac{1}{2}\nabla_g \log\left(\frac{d\mu}{d\vol}\right)(X_t) - \frac{1}{2}g^{ij}(X_t)\Gamma_{ij}^k(X_t)\right] dt. 
    \end{align}
    Path continuity and the fact that $\Gamma_{ij}^{k}(x) = 0$ in normal coordinates based at $x$ establishes the pointwise limit $\lim\limits_{\eps \downarrow 0} \frac{1}{\eps}\Exp{x}[\log_x X_{\eps \wedge T}] = \frac{1}{2}\nabla_g \log \left(\frac{d\mu}{d\vol}\right)(x)$. 
    
    We now show that it suffices to consider the stopped process. For $\eps > 0$ and $x \in M$,
    \begin{align*}
        \norm{\log_x X_{\eps \wedge T}-\log_x X_{\eps}}_g &= 0 \cdot \mathbbm{1}(T > \eps) + \norm{\log_{x} X_{\eps \wedge T}-\log_x X_{\eps}}_g \mathbbm{1}(T \leq \eps) \\
        &\leq \mathbbm{1}(T\leq \eps) \left(d(x,X_{\eps \wedge T}) + d(x,X_{\eps})\right).
    \end{align*}
    As $M$ has lower bounded Ricci curvature, \cite[Proposition 3.7]{krajj-ldp-mani-19} furnishes $C_1, C_2 > 0$ such that for all $\eps > 0$ small enough 
    \begin{align*}
        \Exp{W_{x}^{\eps}}[\mathbbm{1}(T \leq \eps)] &\leq C_1 \exp\left(-C_2/\eps\right).
    \end{align*}
    Let $Q^{\eps}_x \in \cP(C([0,1],M))$ denote the law of the diffusion with generator $\eps\left(-\frac{1}{2}\nabla_g U_{\mu} \cdot \nabla_g + \frac{1}{2}\Delta_g\right)$ started from $x \in M$. From Proposition \ref{prop:rn-deriv-diffusion} and the assumption that $\cU_{\mu}$ is lower bounded, there is some $C(x) > 0$ such that $\frac{dQ^{\eps}_x}{dW^{\eps}_x} \leq C(x)$ so this bound transfers to the diffusion, giving that
    \begin{align*}
        \Exp{Q_{x}^{\eps}}[\norm{\log_x X_{\eps \wedge T}-\log_x X_{\eps}}_g] &\leq C(x) \Exp{W_{x}^{\eps}}[\mathbbm{1}(T\leq \eps)(d(x,X_{\eps \wedge T})+d(x,X_{\eps}))] \\
        &\leq C(x) \sqrt{\Exp{W_{x}^{\eps}}[\mathbbm{1}(T \leq \eps)]} \sqrt{\Exp{W_{x}^{\eps}}[(c+d(x,X_{\eps}))^2]}.
    \end{align*}
    As $M$ has lower bounded Ricci curvature, $\limsup\limits_{\eps \downarrow 0}\Exp{W_x^{\eps}}[d^2(x,X_{\eps})] < +\infty$ by \cite[Equation (3.6.1)]{hsu-stoch-analysis-manifold}. Thus, this upper bound is $o(\eps)$, 
    giving \eqref{eq:ptwise-limit}. 
    
    \textbf{Step 4:}
    Finally then, it remains to show that along a subsequence it holds $\mu$-a.s.\ that
     \begin{align}\label{eq:sb-cutoff-limit}
          \lim\limits_{\eps \downarrow 0} \int_{B\left(x,\frac{c}{2}\right)} \frac{1}{\eps} \log_x(z)\pi^{\eps}(dz|x) &= \frac{1}{2}\nabla_g \log \left(\frac{d\mu}{d\vol}\right).
     \end{align}
    To do this, we introduce a family of cutoff functions $(\chi_x, x \in M)$ such that each $\chi_x : M \to [0,1]$ is smooth with $\chi_x \equiv 1$ on $B(x,c/2)$ and $\chi_x \equiv 0$ on $M \setminus B(x,2c/3)$. For the $\Schro$ bridge, observe that
    \begin{align*}
        \frac{1}{\eps}\int_M \int_{M} \norm{\left(\mathbbm{1}(d(z,x) \leq c/2)-\chi_x(z)\right) \log_x(z)}_g\pi^{\eps}(dz|x) \mu(dx) &\leq \frac{1}{\eps} \cdot \frac{2c}{3} \pi^{\eps}\left((x,z): d(x,z) \in \left[\frac{c}{2},\frac{2c}{3}\right]\right) \\
        &\leq \frac{c}{\eps}\pi^{\eps}((M \times M) \setminus N),  
    \end{align*}
    which vanishes by \eqref{eq:vol-estimate}. Then, assuming these limits exist, along a subsequence it holds for $\mu$-a.s.\ $x\in M$ that 
    \begin{align}
        \lim\limits_{\eps \downarrow 0} \int_{B\left(x,\frac{c}{2}\right)} \frac{1}{\eps} \log_x(z)\pi^{\eps}(dz|x) &=  \lim\limits_{\eps \downarrow 0} \int_{M} \frac{1}{\eps} \chi_x(z)\log_x(z)\pi^{\eps}(dz|x).
    \end{align}
    Again, let $\ell_{\eps}$ denote the coupling constructed in Theorem \ref{thm:sym-rel-ent} and let $(\ell^{\eps}_x, x \in M)$ denote the conditional distributions of the second coordinate given the first is $x$. Apply the first part of Theorem \ref{thm:generator-transfer} to the function $z \mapsto \chi_x(z)\log_x(z)$ to get that
    \begin{align}\label{eq:limit-transfer-ptwise}
        \frac{1}{\eps}\norm{(\ell^{\eps}_x-\pi^{\eps}_x)[\chi_x(z)\log_x(z)]}_g \leq \frac{1}{\eps}\norm{\chi_x\log_x}_\mathrm{Lip} \sqrt{\frac{1-e^{-\kappa \eps}}{\kappa}H(\pi^{\eps}_x|\ell^{\eps}_x)}.
    \end{align}
    For the diffusion analyzed in Step 3, observe that
    \begin{align*}
        \frac{1}{\eps}\int_{M} \norm{\Exp{x}[(1-\chi_x(X_{\eps}))\log_{x}(X_{\eps})]}_g \mu(dx) &\leq \frac{1}{\eps}\Exp{}\left[\mathbbm{1}(d(X_0,X_{\eps})>c/2)\norm{\log_{X_0}(X_\eps)}_g\right] \\
        &\leq \frac{1}{\eps} \sqrt{\Exp{}[d^2(X_0,X_{\eps})]}\sqrt{\ell_{\eps}((M \times M) \setminus N)}.
    \end{align*}
    This bound vanishes by Propositions \ref{prop:diff-fourth-moment} and \ref{prop:almost-ldp-sb}. Hence, from Step 3 it holds $\mu$-a.s.\ along a subsequence that
    \begin{align}\label{eq:cutoff-log-diff-limit}
        \lim\limits_{\eps \downarrow 0} \frac{1}{\eps}\int_{M} \chi_x(z)\log_x(z) \ell_{\eps}(dz|x) &= \lim\limits_{\eps \downarrow 0} \frac{1}{\eps} \Exp{}[\log_x X_{\eps}] = \frac{1}{2}\nabla_g \log\left(\frac{d\mu}{d\vol}\right)(x).
    \end{align}

    By Theorem \ref{thm:sym-rel-ent}, \eqref{eq:limit-transfer-ptwise}, and \eqref{eq:cutoff-log-diff-limit}, along a subsequence it holds for $\mu$-a.s.\ $x \in M$ that
     \begin{align*}
         \lim\limits_{\eps \downarrow 0}\frac{1}{\eps}\int_{M} \chi_{x}(z)\log_{x}(z) \pi^{\eps}(dz|x) &= \lim\limits_{\eps \downarrow 0} \frac{1}{\eps}\ell_{x}^{\eps}[\chi_x(z)\log_x(z)] = \frac{1}{2}\nabla_g \log \left(\frac{d\mu}{d\vol}\right). 
     \end{align*}
     Along with Step 1 (and perhaps taking a further sequence), this establishes \eqref{eq:major-claim}. 

     Finally then it remains to establish \eqref{eq:manifold-bp}. This is simply a consequence of applying Theorem \ref{thm:generator-transfer} in global coordinates to the smooth functions $\xi_{i}(x) := x_i$ for all $i \in \{1,\dots,d\}$. Observe that each $\xi_i$ is Lipshitz with respect to $d$ as the Hessian distance is equivalent to the Euclidean distance. In the global coordinate system,
     \begin{align*}
         \cL \xi_i(x) &= \frac{1}{2}\langle \nabla_g \log \left(\frac{d\mu}{d\vol}\right), \nabla_g \xi_{i}\rangle_{g}(x) + \frac{1}{2}\Delta_{g}\xi_{i}(x) \\
         &= \frac{1}{2} \frac{\partial}{\partial x^{u}}\log \left(\frac{d\mu}{d\vol}\right)(x)g^{uv}(x)\frac{\partial}{\partial x^{v}}\xi_i(x) + \frac{1}{2}g^{uv}(x)\frac{\partial^2}{\partial x^u \partial x^{v}}\xi_{i}(x) - \frac{1}{2}g^{uv}\Gamma_{uv}^{\ell}\frac{\partial}{\partial x^{\ell}}\xi_{i}(x) \\
         &= \frac{1}{2}g^{iu}\frac{\partial}{\partial x^{u}}\log \left(\frac{d\mu}{d\vol}\right)(x)-\frac{1}{2}g^{uv}\Gamma_{uv}^{i}.
     \end{align*}
     Now, suppose in addition that  $g = \nabla^2 \varphi(x)$ for a smooth strictly convex function $\varphi$. Then \eqref{eq:hess-manifold-bp} is established from the following identity. The Christoffel symbols for Hessian manifolds are given in \eqref{eq:cris-sym-primal}, from which it follows that
     \begin{align*}
         \frac{1}{2} \nabla_g \log \det g   &= \frac{1}{2}g^{ij}g^{uv}\frac{\partial}{\partial x^j} g_{uv} = g^{uv}\Gamma_{uv}^{j}.
     \end{align*}
\end{proof}

\section{Towards Different Marginals}\label{sec:diff-marg}
We now specialize to the case of the Hessian manifold. For specificity, we develop notation that is used only in this section. 

Recall the notation for Hessian manifolds developed in Section \ref{subsec:geom}. In particular, for a convex function $\varphi: \mathbb{R}^{d} \to \mathbb{R}$, $\varphi^*$ denotes its convex conjugate. We denote the primal coordinates by $x \in \mathbb{R}^{d}$ and the dual coordinates by $x^* = \nabla \varphi(x) \in \mathbb{R}^{d}$. For a measure $\mu \in \cP_2(M)$ satisfying Assumption \ref{assumption:hessian-manifold} below, denote its Lebesgue density in primal and dual coordiantes by $e^{-f}$ and $e^{-h}$, respectively. Observe that $e^{-h} = (\nabla \varphi)_{\#}e^{-f}$, so that $\nabla \varphi$ is the Brenier map from $e^{-f}$ to $e^{-h}$. By the change of variables formula, the following crucial identity holds
\begin{align}\label{eq:change-of-variables}
    h(x^*) = f(x) + \log \det \nabla^2 \varphi(x). 
\end{align}
To summarize the notational differences, in this section we write $\vol^{\varphi}(dx) = \sqrt{\det \nabla^2 \varphi(x)}dx$ to refer to the volume measure of the Hessian manifold in primal coordinates. To refer to Lebesgue measure on $\mathbb{R}^{d}$, we write $\Leb$. With regards to information theoretic quantities, $\Ent(\eta) = H(\eta|\Leb)$ for all $\eta \in \cP(\mathbb{R}^{d})$. To refer to the gradient and vector norm on the Hessian manifold, we write $\nabla_g$ and $\norm{\cdot}_g$, respectively. For their Euclidean counterparts we write $\nabla$ and $\norm{\cdot}$, i.e.\ with no subscript. In this way, we distinguish between Fisher information on the Hessian manifold versus that on Euclidean space by writing, respectively,
\begin{align*}
    I_{\varphi}(\alpha|\beta) &= \Exp{\alpha}\norm{\nabla_g \log \left(\frac{d\alpha}{d\beta}\right)}^2_{g} \text{ and } I(\alpha|\beta) = \Exp{\alpha}\norm{\nabla \log \left(\frac{d\alpha}{d\beta}\right)}^2.
\end{align*} 
From this moment on, we identify the Hessian manifold $M$ with $\mathbb{R}^{d}$, making it clear when we are performing computations on the manifold or on $\mathbb{R}^{d}$. Let $(B_t^{\varphi}, t \geq 0)$ denote the $\mathbb{R}^{d}$-valued stochastic process obtained by writing the Hessian Brownian motion in \textbf{primal coordinates} running in stationarity. Set $R_{0\eps}^{\varphi} = \text{Law}(B_0^{\varphi},B_\eps^{\varphi})\in \cM_{+}(\mathbb{R}^{d}\times \mathbb{R}^{d})$ and in this primal coordinate system define $r_{\eps}: \mathbb{R}^{d} \times \mathbb{R}^{d} \to (0,+\infty)$ by
\begin{align}\label{eq:r-eps-primal-primal}
    r_{\eps}(x,z) = \frac{dR_{0\eps}^{\varphi}}{d(\vol^{\varphi} \otimes \vol^{\varphi})}(x,z) \sqrt{\det \nabla^2 \varphi(z)}.
\end{align}
Observe carefully that this is a different convention than used in previous sections. The reason for this is that we treat $r_{\eps}$ as an transition density over $\mathbb{R}^{d}$. In particular, observe that now $\int_{\mathbb{R}^{d}} r_{\eps}(x,z)dz = 1$ for all $x \in \mathbb{R}^{d}$. Of course, $dz$ here means $\Leb(dz)$. 

We make the following assumptions on the Hessian manifold. Recall from Corollary \ref{cor:non-explosion-mld} that in primal and dual coordinates, respectively, $U_{\mu} = -\log(d\mu/d\vol)$ writes as
\begin{align*}
    U_{\mu} = f(x) + \frac{1}{2}\log \det \nabla^2 \varphi(x) = h(x^*) + \frac{1}{2}\log \det \nabla^2 \varphi^*(x^*). 
\end{align*}
\begin{assumption}[Hessian Manifold]\label{assumption:hessian-manifold}
    We assume that $\varphi: \mathbb{R}^{d} \to \mathbb{R}$ is a smooth, strictly convex function and $\mu \in \cP_2(M)$ are such that
    \begin{itemize}
        \item[(1)] There exists $\alpha, \beta > 0$ such that $\alpha \Id \leq \nabla^2 \varphi(x) \leq \beta \Id$ for all $x \in \mathbb{R}^{d}$.
        \item[(2)] All third and fourth derivatives of $\varphi$ are bounded over $\mathbb{R}^{d}$.
        \item[(3)] $H(\mu|\vol^{\varphi})$ is finite, $\Exp{\mu}\left[\norm{\nabla_g U_{\mu}}_g^4\right] < +\infty$, $\inf \limits_{M}\cU_{\mu} > -\infty$, and $(M,g,\mu)$ satisfies $\mathrm{CD}(\kappa_{\mu},+\infty)$ for some $\kappa_{\mu} \in \mathbb{R}$. 
        \item[(4)] The limit \eqref{eq:o-eps-2-limit} holds with Wiener reference measure ($\diffref = \vol^{\varphi}$ so that $U_{\diffref} = \cU_{\diffref} = 0$). By Proposition \ref{prop:h2-int-argument}, a sufficient condition is that $\mu$ has subexponential tails and $\norm{\nabla_g \cU_{\mu}}^2_{g}$ has polynomial growth. 
    \end{itemize}
\end{assumption}
Observe that this setting satisfies Assumption \ref{assumption:manifold} (H2). In particular, from the Ricci tensor computation in \cite[Equation (3.9)]{kolesnikov-hessian-metric}, the global bounds on the second and third derivatives of $\varphi$ guarantee that the Ricci tensor is lower bounded by some constant. That is, $(M,g,\vol)$ satisfies $\mathrm{CD}(\kappa_{\vol},d)$ for some $\kappa_{\vol} \in \mathbb{R}$. We remark that \cite[Theorem 4.3]{kolesnikov-hessian-metric} gives a sufficient to conclude that the curvature dimension condition on $(M,g,\mu)$ holds. Additionally,  $I_{\varphi}(\mu|\vol^{\varphi}) = \Exp{\mu}[\norm{\nabla_g U_{\mu}}_g^2]$ is finite by Assumption \ref{assumption:hessian-manifold} (3). 

Towards the objective outlined in the Introduction above the informal statement of Theorem \ref{thm:mld-quad-sb-comparison}, pushforward the second coordinate of $r_{\eps}$ defined in \eqref{eq:r-eps-primal-primal} by $\nabla \varphi$ to obtain $\bar{r}_{\eps}: \mathbb{R}^{d} \times \mathbb{R}^{d} \to (0,+\infty)$ defined as 
\begin{align}\label{eq:r-eps-primal-dual}
    \bar{r}_{\eps}(x,y) := r_{\eps}(x,\nabla \varphi^*(y))\det \nabla^2 \varphi^*(y) = \frac{dR_{0\eps}^{\varphi}}{d(\vol^{\varphi} \otimes \vol^{\varphi})}(x,\nabla \varphi^*(y)) \sqrt{\det \nabla^2 \varphi^*(y)}. 
\end{align}
This is the reference measure on $\mathbb{R}^{d} \times \mathbb{R}^{d}$ we will consider. In other words, we are computing entropic optimal transport with the following cost function on $\mathbb{R}^{d} \times \mathbb{R}^{d}$
\begin{align}\label{eq:pushforward-cost}
    -\eps\log \bar{r}_{\eps}(x,y) = \frac{1}{2}d^2_{\varphi}(x,\nabla \varphi^*(y)) + \frac{\eps}{2} \log \det \nabla^2 \varphi^*(y) + \text{(Lower order terms)}.
\end{align}
As Proposition \ref{prop:hess-dist-symm-breg} details, the leading order term in the above cost function is best interpreted as an information geometric quantity. It has the following expansion, where $\cdot$ denotes the standard Euclidean inner product (and will for this section),
\begin{align}\label{eq:expansion-riemann-dist-symm-div}
    \frac{1}{2}d_{\varphi}^2(x,\nabla \varphi^*(y)) &= \frac{1}{2}(y-x^*)\cdot (\nabla \varphi^*(y)-x)+O\left(\norm{x-\nabla \varphi^*(y)}^4\right).
\end{align}
In other words, by taking the pushforward in second coordinate we shift the underlying geometry on the Hessian manifold. While the cost function from $r_{\eps}$ reflects the Hessian manifold equipped with its Levi-Civita connection (in primal coordinates), the cost function derived from $\bar{r}_{\eps}$ reflects the Hessian manifold equipped with its dually flat affine connection \cite{shima2007geometry}. That is, recall the Bregman divergence corresponding to $\varphi$, defined by
\begin{align}\label{eq:bregman-div-varphi}
    D_{\varphi}[z|x] = \varphi(z)-\varphi(x) - \nabla \varphi(x) \cdot (z-x) = \varphi(z)-\varphi(x) - x^* \cdot (z-x).
\end{align}
Observe that the symmetrized Bregman divergence is then
\begin{align}\label{eq:symm-bregman-div-varphi}
    D_{\varphi}[z|x]+D_{\varphi}[x|z] = (z^*-x^*)\cdot (z-x),
\end{align}
which matches the leading order term on the RHS of \eqref{eq:pushforward-cost} with $y = z^*$.

The cost function in \eqref{eq:pushforward-cost} is continuous and satisfies the necessary integrability properties outlined in \cite[Theorem 4.2]{nutz-notes} to guarantee the existence of the $\Schro$ bridge from $e^{-f}$ to $e^{-h}$ with respect to this cost and well as its product structure. We now show that this $\Schro$ bridge is a simple transformation of a same-marginal $\Schro$ bridge computed on the Hessian manifold.  

Let $\pi^{\eps}$ denote the same-marginal $\Schro$ bridge from $\mu$ to itself with respect to the Hessian BM in primal coordinates. Then in primal coordinates $\pi^{\eps} \in \Pi(e^{-f},e^{-f})$ and has Lebesgue density given by $\pi^{\eps}(x,z) = a^{\eps}(x)a^{\eps}(z)r_{\eps}(x,z)\vol^{\varphi}(x)$. Now, pushforward the second coordinate by $\nabla \varphi$ to obtain
\begin{align}\label{eq:sb-dual-primal}
    \bar{\pi}_{\eps} := (\Id,\nabla \varphi)_{\#}\pi^{\eps} \in \Pi(e^{-f},e^{-h}).
\end{align}
Now, observe that 
\begin{align}
    \bar{\pi}_{\eps}(x,y) &= \pi^{\eps}(x,\nabla\varphi^*(y))\det \nabla^2 \varphi^*(y) 
    = a^{\eps}(x)a^{\eps}(\nabla \varphi^*(y))\bar{r}_{\eps}(x,y)\vol^{\varphi}(x). 
\end{align}
As it possesses the necessary product structure, $\bar{\pi}_{\eps}$ is the $\eps$-static $\Schro$ bridge from $e^{-f}$ to $e^{-h}$ with respect to $\bar{r}_{\eps}$. As in \eqref{eq:MLD}, let $(X_t, t \geq 0)$ denote the $\mathbb{R}^{d}$-valued process that is the Mirror Langevin diffusion corresponding to $\mu$ in primal coordinates. Set $\ell_{\eps} := \text{Law}(X_0,X_{\eps}) \in \Pi(e^{-f},e^{-f})$, and pushforward the second coordinate by $\nabla \varphi$ to obtain 
\begin{align}\label{eq:mld-dual-primal}
    \bar{\ell}_{\eps} := (\Id,\nabla \varphi)_{\#}\ell_{\eps} = \mathrm{Law}(X_0,X_\eps^*) \in \Pi(e^{-f},e^{-h}).
\end{align}

Recall that Theorem \ref{thm:sym-rel-ent} bounds $H(\ell_{\eps}|\pi^{\eps})+H(\pi^{\eps}|\ell_{\eps})$. As $(\Id,\nabla \varphi)$ is invertible, by \cite[Lemma 9.4.5]{ambrosio2005gradient} it holds that $H(\bar{\ell}_{\eps}|\bar{\pi}_{\eps}) = H(\ell_{\eps}|\pi^{\eps})$ and $H(\bar{\pi}_{\eps}|\bar{\ell}_{\eps}) = H(\pi^{\eps}|\ell_{\eps})$.
Hence, the exact same bound for their sums as in Theorem \ref{thm:sym-rel-ent} holds. We summarize this below.
\begin{align}\label{prop:symm-ent-pushforward}
    H(\bar{\ell}_{\eps}|\bar{\pi}_{\eps})+H(\bar{\pi}_{\eps}|\bar{\ell}_{\eps}) &\leq  \frac{1}{2}\eps^2 \left(\int_0^{1} \norm{\nabla_g \cU_{\mu}}_{L^{2}(\mu_t^{\eps})}^{2}\right)^{1/2}\left(I_{\varphi}(\mu|\vol^{\varphi})-\int_{0}^{1} I_{\varphi}(\mu_{t}^{\eps}|\vol^{\varphi})dt\right)^{1/2}.
    \end{align}
Note that the left hand side contains quantities obtained by mixing the dual and primal coordinate systems, whereas the upper bound is independent of any coordinate system (i.e.\ it is a geometrically invariant property of the manifold and choice of $\mu$).  

We now use \eqref{prop:symm-ent-pushforward} to quantify the extent to which $\bar{\ell}_{\eps} \in \Pi(e^{-f},e^{-h})$ approximates the (one-half) \textbf{quadratic cost} Euclidean $\Schro$ bridge from $e^{-f}$ to $e^{-h}$. That is, let $q_{\eps}(x,y) :=  (2\pi\eps)^{-d/2}\exp\left(-\norm{x-y}^2/2\eps\right)$ and let $\Pi_{\eps} \in \Pi(e^{-f},e^{-h})$ be such that
\begin{align}\label{eq:euclidean-sb}
    \Pi_{\eps} := \argmin\limits_{\pi \in \Pi(e^{-f},e^{-h})} H(\pi|q_{\eps}).
\end{align}

\begin{theorem}\label{thm:mld-quad-sb-comparison}
    Under Assumption \ref{assumption:hessian-manifold}, let $\bar{\ell}_{\eps}$ be as defined in \eqref{eq:mld-dual-primal}. Then 
    \begin{align}
        \eps H(\bar{\ell}_{\eps}|q_{\eps}) = \frac{1}{2}\Was{2}^2(e^{-f},e^{-h})+\frac{\eps}{2}(\Ent(e^{-f})+ \Ent(e^{-h}))+o(\eps). 
    \end{align}
    Let $\Pi_{\eps}$ be as in \eqref{eq:euclidean-sb}, then
    \begin{align}\label{eq:mld-sb-approx}
        \lim\limits_{\eps \downarrow 0}H(\bar{\ell}_{\eps}|\Pi_{\eps}) &= 0. 
    \end{align}
\end{theorem}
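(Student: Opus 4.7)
The plan is to establish the expansion in the first part of the theorem directly, and then deduce the second part from it via a Pythagorean identity. For the second part: since both $\bar{\ell}_{\eps}$ and $\Pi_{\eps}$ lie in $\Pi(e^{-f},e^{-h})$ and $\Pi_{\eps}$ is the $I$-projection of $q_{\eps}$ onto this marginal constraint, the Pythagorean identity for relative entropy gives
\[
H(\bar{\ell}_{\eps}|q_{\eps}) = H(\bar{\ell}_{\eps}|\Pi_{\eps}) + H(\Pi_{\eps}|q_{\eps}).
\]
Invoking the classical Euclidean Schr\"odinger bridge cost expansion $\eps H(\Pi_{\eps}|q_{\eps}) = \tfrac{1}{2}\Was{2}^{2}(e^{-f},e^{-h}) + \tfrac{\eps}{2}(\Ent(e^{-f}) + \Ent(e^{-h})) + o(\eps)$ (which holds for sufficiently regular marginals), together with the first part, yields $\eps H(\bar{\ell}_{\eps}|\Pi_{\eps}) = o(\eps)$, hence $H(\bar{\ell}_{\eps}|\Pi_{\eps}) \to 0$.

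For the first part, I would first unpack the Gaussian reference density to obtain
\[
\eps H(\bar{\ell}_{\eps}|q_{\eps}) = \tfrac{1}{2}\int \|x-y\|^{2}\, d\bar{\ell}_{\eps}(x,y) + \eps\Ent(\bar{\ell}_{\eps}) + \tfrac{\eps d}{2}\log(2\pi\eps).
\]
The quadratic term I would evaluate by applying It\^o's formula to $Y_{t}:=\nabla\varphi(X_{t})$ along the primal MLD: by identity \eqref{eq:grad-log-cov} (already exploited in the proof of Corollary \ref{cor:non-explosion-mld}), the drift and It\^o corrections collapse to produce precisely the dual MLD $dY_{t} = -\tfrac{1}{2}\nabla f(X_{t})\,dt + \sqrt{\nabla^{2}\varphi(X_{t})}\,dB_{t}$. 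Writing $\|X_{0} - Y_{\eps}\|^{2} = \|X_{0}-Y_{0}\|^{2} + 2(X_{0}-Y_{0})\cdot(Y_{0}-Y_{\eps}) + \|Y_{0}-Y_{\eps}\|^{2}$, taking stationary expectation (the stochastic integral is a martingale), and integrating by parts in the cross term against $e^{-f}$ turns $\Exp{\mu}[(X_{0}-\nabla\varphi(X_{0}))\cdot\nabla f(X_{0})]$ into $d - \int \Delta\varphi\, d\mu$. Combined with the It\^o-bracket contribution $\eps\int \Delta\varphi\, d\mu$, this yields $\int \|x-y\|^{2}\,d\bar{\ell}_{\eps} = \Was{2}^{2}(e^{-f},e^{-h}) + \eps d + o(\eps)$, where the Brenier theorem identifies $\Was{2}^{2}(e^{-f},e^{-h}) = \Exp{\mu}\|x-\nabla\varphi(x)\|^{2}$.

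For the entropy term, the change of variables $\bar{\ell}_{\eps} = (\Id,\nabla\varphi)_{\#}\ell_{\eps}$ combined with stationarity gives $\Ent(\bar{\ell}_{\eps}) = \Ent(\ell_{\eps}) - \int \log\det\nabla^{2}\varphi\,d\mu$, and from $h(\nabla\varphi(x)) = f(x) + \log\det\nabla^{2}\varphi(x)$ one has $\tfrac{1}{2}(\Ent(e^{-f}) + \Ent(e^{-h})) = \Ent(e^{-f}) - \tfrac{1}{2}\int \log\det\nabla^{2}\varphi\,d\mu$. To estimate $\Ent(\ell_{\eps})$ I would apply Proposition \ref{prop:heat-kernel-asymp} to the Hessian-manifold Brownian motion and pass to the MLD transition density via Proposition \ref{prop:rn-deriv-diffusion} (the Radon-Nikodym factor contributing only $o(1)$ since $\cU_{\mu}$ is $\mu$-integrable). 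To leading order the MLD conditional density $p_{\eps}(x,\cdot)$ on $\R^{d}$ is Gaussian with covariance $\eps\,\nabla^{2}\varphi(x)^{-1}$, whose differential entropy is $-\tfrac{d}{2}\log(2\pi\eps) - \tfrac{d}{2} + \tfrac{1}{2}\log\det\nabla^{2}\varphi(x)$. Integrating against $e^{-f}$ gives $\Ent(\ell_{\eps}) = \Ent(e^{-f}) - \tfrac{d}{2}\log(2\pi\eps) - \tfrac{d}{2} + \tfrac{1}{2}\int \log\det\nabla^{2}\varphi\,d\mu + o(1)$. Assembling the pieces, the $+\eps d$ from the quadratic term cancels the $-\eps d$ from the entropy, and the $\tfrac{\eps d}{2}\log(2\pi\eps)$ cancels the matching term in $\eps\Ent(\bar{\ell}_{\eps})$, producing precisely the expansion claimed in part 1.

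The main technical hurdle will be the rigorous justification of the entropy asymptotic in the non-compact setting. Proposition \ref{prop:heat-kernel-asymp} is only uniform on compact sets off the cut locus, so the contribution to $\Ent(\ell_{\eps})$ from off-diagonal pairs $(x,z)$ with $d_{\varphi}(x,z) > c$ must be controlled; this should follow from Proposition \ref{prop:almost-ldp-sb}, using the fourth-moment integrability of $\nabla_{g}U_{\mu}$ in Assumption \ref{assumption:hessian-manifold}(3) together with the two-sided bounds $\alpha\Id \leq \nabla^{2}\varphi \leq \beta\Id$ that equate manifold and Euclidean distances. The diagonal remainder must be shown to vanish in expectation, and the It\^o-based estimate of the quadratic term relies on the same moment bound together with the boundedness of the third and fourth derivatives of $\varphi$ imposed in Assumption \ref{assumption:hessian-manifold}(2) to control $o(\eps)$ corrections.
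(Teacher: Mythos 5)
Your route is genuinely different from the paper's. The paper splits $\eps H(\bar\ell_\eps|q_\eps)$ as $\eps H(\bar\ell_\eps|\bar r_\eps) + \eps\,\Exp{\bar\ell_\eps}\!\left[\log\left(\bar r_\eps/q_\eps\right)\right]$; because $\bar r_\eps$ and $q_\eps$ share the $(2\pi\eps)^{-d/2}$ prefactor, the divergent $\log(2\pi\eps)$ cancels at the start inside the ratio, the first term is pinned between $\Ent(e^{-f})$ and $\Ent(e^{-f})+\tfrac{\eps}{8}I_\varphi(\mu|\vol^\varphi)$ via Girsanov and Proposition \ref{prop:cal-u-ip-identities}, and the log-ratio is decomposed into a Riemannian-vs.-symmetrized-Bregman distance comparison (evaluated exactly by MLD stationarity and reversibility) plus the vanishing heat-kernel remainder \eqref{eq:heat-kernel-d2-bdd}. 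You instead unpack $q_\eps$ into the quadratic cost, $\eps\Ent(\bar\ell_\eps)$, and the explicit $\tfrac{\eps d}{2}\log(2\pi\eps)$, computing the cost by It\^o and integration by parts — a clean alternative to the paper's reversibility computation with $D_\varphi[\cdot|\cdot]$. Your arithmetic is correct, and the Pythagorean step for part (2) coincides with the paper's (both use Proposition \ref{prop:sb-cost-exp-rd}).

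The place where the sketch under-counts the work is the entropy asymptotic. The one-line claim that $p_\eps(x,\cdot)$ is to leading order Gaussian with covariance $\eps\,\nabla^2\varphi(x)^{-1}$ compresses a chain of steps that together constitute the hard part of the paper's argument. Writing $\Ent(\ell_\eps)=\Ent(e^{-f})+\Exp{}\log p_\eps(X_0,X_\eps)$ and passing to the Hessian heat kernel $r_\eps^\varphi$ via Proposition \ref{prop:rn-deriv-diffusion} introduces (i) the Girsanov factor, whose expected log is indeed $O(\eps)$ by the $c(x,z,\eps)$ estimate already used in Theorem \ref{thm:sym-rel-ent}; (ii) the claim $\Exp{}\big[\log\big((2\pi\eps)^{d/2}r_\eps^\varphi\big)+d^2/2\eps\big]\to 0$, which is precisely \eqref{eq:heat-kernel-d2-bdd} and needs both the compact three-region argument and the global majorization \eqref{eq:heat-kernel-majorization} from \cite{jiang16heatkernel} — the two-sided bounds on $\nabla^2\varphi$ and the fourth-moment condition alone do not give it; and (iii) a further claim $\Exp{}\big[d^2(X_0,X_\eps)/2\eps\big]\to d/2$, which requires comparing $d_\varphi^2$ to the quadratic form at the base point (Proposition \ref{prop:hess-dist-symm-breg}) and moment bounds, and which has no counterpart in the paper since the Bregman/reversibility identity delivers the cost contribution exactly. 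You also cite Proposition \ref{prop:almost-ldp-sb} for off-diagonal control, but that result concerns $\pi^\eps$; for $\ell_\eps$ you want \eqref{eq:diff-off-diag-concen} or Proposition \ref{prop:diff-fourth-moment} directly. Finally the integration by parts turning $\Exp{\mu}[(x-\nabla\varphi)\cdot\nabla f]$ into $d-\int\Delta\varphi\,d\mu$ needs a boundary-term check (the integrand has linear growth and $e^{-f}$ is only assumed to be in $\cP_2$ with subexponential tails, so this is fine, but should be stated). In short, the proposal is sound but strictly more work than the paper's decomposition, and the ``Gaussian differential entropy'' sentence is where that work is hiding.
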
   

\begin{proof}
    Observe that
    \begin{align}\label{eq:quad-cost-split-mld}
        \eps H(\bar{\ell}_{\eps}|q_{\eps}) &= \eps H(\bar{\ell}_{\eps}|\bar{r}_{\eps}) + \eps \Exp{\bar{\ell}_{\eps}}\left[\log\left(\frac{r_{\eps}(x,\nabla \varphi^*(y))\det \nabla^2 \varphi^*(y)}{q_{\eps}(x,y)}\right)\right].
    \end{align}
    As computed previously in \eqref{eq:static-ell-eps-mu-reps},
    \begin{align*}
        H(\bar{\ell}_{\eps}|\bar{r}_{\eps}) &= H(\ell_{\eps}|r_{\eps}) = \Exp{e^{-f}}[-f]+H(\ell_{\eps}|e^{-f(x)}r_{\eps}(x,z)) \leq \Ent(e^{-f})+\frac{\eps}{8}I_{\varphi}(\mu|\vol^{\varphi}).
    \end{align*}
    It remains then to analyze the rightmost term in \eqref{eq:quad-cost-split-mld}. First, recall that
    \begin{align*}
        \eps \log r_{\eps}(x,z) &= \eps\log\left(\frac{dR_{0\eps}^{\varphi}}{d(\vol^{\varphi} \otimes \vol^{\varphi})}(x,z)\right)+\frac{\eps}{2}\log \det \nabla^2 \varphi(z).
    \end{align*}
    For shorthand, set $r_{\eps}^{\varphi}(x,z) := \frac{dR_{0\eps}^{\varphi}}{d(\vol^{\varphi} \otimes \vol^{\varphi})}(x,z)$. Let $(X_t, t \geq 0)$ denote the primal \eqref{eq:MLD}. Then from \eqref{eq:change-of-variables} it holds that
    \begin{align*}
        &\eps \Exp{\bar{\ell}_{\eps}}\left[\log\left(\frac{r_{\eps}(x,\nabla \varphi^*(y))\det \nabla^2 \varphi^*(y)}{q_{\eps}(x,y)}\right)\right]
        &= \eps\Exp{}\left[\log\left(\frac{r_{\eps}^{\varphi}(X_0,X_{\eps})}{q_{\eps}(X_0,X_{\eps}^*)}\right)\right]+\frac{\eps}{2}(\Ent(e^{-h})-\Ent(e^{-f})).
    \end{align*}
    The claim of the theorem then holds once we show
    \begin{align}\label{eq:heat-kernel-comp}
    \Exp{}\left[\log\left(\frac{r_{\eps}^{\varphi}(X_0,X_{\eps})}{q_{\eps}(X_0,X_{\eps}^*)}\right)\right] &= \frac{1}{2\eps}\Was{2}^2(e^{-f},e^{-h})+o(1).
    \end{align} 
    Now, observe that the left hand side of \eqref{eq:heat-kernel-comp} can be written as 
    \begin{align}
        \Exp{}\left[\log\left((2\pi\eps)^{d/2}r_{\eps}^{\varphi}(X_0,X_{\eps})\right)+\frac{1}{2\eps}d^2(x,z)\right]+\Exp{}\left[\frac{1}{2\eps}\norm{X_0-X_{\eps}^*}^2-\frac{1}{2\eps}d^2(X_0,X_{\eps})\right].
    \end{align}
    
First, we analyze the term inside the second expectation. Recall the Bregman divergence $D_{\varphi}[\cdot|\cdot]$ introduced in \eqref{eq:bregman-div-varphi}. From Proposition \ref{prop:hess-dist-symm-breg}, there is a $K> 0$ such that for all $x \in \mathbb{R}^{d}$ and $z \notin C_{x}$, 
\begin{align}
    \abs{d^2_{\varphi}(x,z) - \left(D_{\varphi}[x|z]+D_{\varphi}[z|x]\right)} \leq K\norm{x-z}^4
\end{align}
Recall that $\vol(C_x) = 0$ and thus $\mu(C_x) = 0$ for all $x \in \mathbb{R}^{d}$. Next, we compute
\begin{align*}
    \Exp{}\left[D_{\varphi}[X_\eps|X_0]+D_{\varphi}[X_0|X_{\eps}]-\norm{X_\eps^*-X_0}^2\right] &= \Exp{}\left[(X_{\eps}-X_0)\cdot (X_{\eps}^*-X_0^*)-\norm{X_\eps^*-X_0}^2\right] \\
    &= \Exp{}\left[X_{\eps}^*\cdot (X_{\eps}-X_\eps^*)+X_0 \cdot (X_0^*-X_0)\right]\\
    &= \Exp{}\left[(X_0^*-X_0) \cdot (X_0 - X_0^*)\right] = -\Was{2}^2(e^{-f},e^{-h}).
\end{align*}
For the second line, observe that $\Exp{}\left[-X_{\eps}\cdot X_0^* - X_0 \cdot X_{\eps}^* + 2X_0 \cdot X_{\eps}^*\right] = 0$ by the reversibility of the MLD. The last line follows from stationarity. From Proposition \ref{prop:diff-fourth-moment} and the fact that $\Exp{\mu}[\norm{\nabla_g U_{\mu}}_g^{4}] < +\infty$, it holds that $\Exp{}\norm{X_\eps-X_0}^4 = O(\eps^2)$ and thus
\begin{align*}
    \Exp{}\left[-\frac{1}{2}d^2_{\varphi}(X_0,X_{\eps})+\frac{1}{2}\norm{X_{\eps}^*-X_0}^2\right] = \frac{1}{2}\Was{2}^2(e^{-f},e^{-h}) + O(\eps^2).
\end{align*}
Now, we will show that
\begin{align}\label{eq:heat-kernel-d2-bdd}
    \lim\limits_{\eps \downarrow 0}\Exp{}\left[\log\left((2\pi\eps)^{d/2}r_{\eps}^{\varphi}(X_0,X_{\eps})\right)+\frac{1}{2\eps}d^2(X_0,X_{\eps})\right] = 0.
\end{align}
Fix $\eta > 0$, and let $K \subset \mathbb{R}^{d}$ be a compact set such that $\mu(K) > 1 - \eta$. Let $c = \inf\limits_{x \in K} \mathrm{inj}(x) > 0$. Define the following three subsets of $\mathbb{R}^{d} \times \mathbb{R}^{d}$:
\begin{align*}
    U_1 &= \{(x,z) \in \mathbb{R}^{d} \times \mathbb{R}^{d}: d(x,z) > c/2\} \\
    U_2 &= \{(x,z) \in \mathbb{R}^{d} \times \mathbb{R}^{d}: x,z \in K, d(x,z) \leq c/2\} \\
    U_{3} &= (\mathbb{R}^{d}\times \mathbb{R}^{d}) \setminus (U_1 \cup U_2).
\end{align*}
On $U_2$, we will use the pointwise expansion of the heat kernel in Proposition \ref{prop:heat-kernel-asymp}, as $U_2$ is a compact set that does not intersect the cut locus. On $U_1$ and $U_3$, we will use the following global bound from \cite[Theorem 1.2]{jiang16heatkernel} 
    \begin{align}\label{eq:heat-kernel-majorization}
        \abs{\log\left((2\pi\eps)^{d/2}r_{\eps}^{\varphi}(x,z)\right)+\frac{1}{2\eps}d^2(x,z)} &\leq K\left(\frac{1}{\eps}d^2(x,z)+1\right)
    \end{align}
    where $K > 0$ holds for all $x,z \in M$ and $\eps > 0$ small enough. Observe that the existence of a $K$ for all $x,z \in M$ owes to the global positive upper and lower bounds on the Hessian in Assumption \ref{assumption:hessian-manifold}. We also use the following fourth moment bound from Proposition \ref{prop:diff-fourth-moment} thanks to the assumption that $\Exp{\mu}\left[\norm{\nabla_g U_{\mu} }^{4}_{g}\right] < +\infty$,
\begin{align}\label{eq:diff-fourth-moment-bdd}
    \Exp{}[d^{4}(X_0,X_{\eps})] \leq K\eps^2. 
\end{align}
On $U_1$, observe that
\begin{align*}
    \int_{U_1} \abs{\log\left((2\pi\eps)^{d/2}r_{\eps}^{\varphi}(x,z)\right)+\frac{1}{2\eps}d^2(x,z)} &\ell_{\eps}(dxdz) \leq \frac{K}{\eps} \int_{U_1} d^2(x,z) \ell_{\eps}(dxdz) + K\ell_{\eps}(U_1) \\
    &\leq \frac{K}{\eps}\left(\int_{\mathbb{R}^{d} \times \mathbb{R}^{d}}d^4(x,z)\ell_{\eps}(dxdz)\right)^{1/2}\sqrt{\ell_{\eps}(U_1)}+K\ell_{\eps}(U_1).
\end{align*}
By \eqref{eq:diff-fourth-moment-bdd} and \eqref{eq:diff-off-diag-concen}, the above quantity vanishes as $\eps \downarrow 0$. Similarly, on $U_3$ we obtain
\begin{align*}
    \int_{U_3} \abs{\log\left((2\pi\eps)^{d/2}r_{\eps}^{\varphi}(x,z)\right)+\frac{1}{2\eps}d^2(x,z)} \ell_{\eps}(dxdz) &\leq \frac{K}{\eps}\left(\int_{\mathbb{R}^{d} \times \mathbb{R}^{d}}d^4(x,z)\ell_{\eps}(dxdz)\right)^{1/2}\sqrt{\ell_{\eps}(U_3)}+K\ell_{\eps}(U_3) \\
    &\leq K'\sqrt{\eta}+K\eta.
\end{align*}
Recall the expansion from Proposition \ref{prop:heat-kernel-asymp}, which holds for all $(x,z) \in U_2$. In this case,
\begin{align*}
    \int_{U_2} \abs{\log\left((2\pi\eps)^{d/2}r_{\eps}^{\varphi}(x,z)\right)+\frac{1}{2\eps}d^2(x,z)} \ell_{\eps}(dxdz) &=  \int_{U_2} \abs{\log c_0(x,z) + \log\left(1+\eps \frac{R(\eps,x,z)}{c_0(x,z)}\right)} \ell_{\eps}(dxdz).
\end{align*}
As $c_0$ is smooth on $U_2$, $c_0(x,x) = 1$ for all $x \in \mathbb{R}^{d}$, and $R(\eps,x,z)$ remains bounded over all $x,z \in U_2$ as $\eps$ vanishes, it follows that the above quantity vanishes as $\eps \downarrow 0$. Altogether then,
\begin{align*}
    \limsup\limits_{\eps \downarrow 0} \int_{\mathbb{R}^{d} \times \mathbb{R}^d} \abs{\log\left((2\pi\eps)^{d/2}r_{\eps}^{\varphi}(x,z)\right)+\frac{1}{2\eps}d^2(x,z)} \ell_{\eps}(dxdz) \leq K'\sqrt{\eta}+K\eta.
\end{align*}
As $\eta > 0$ is arbitrary, this establishes \eqref{eq:heat-kernel-d2-bdd}.

Now, let $\Pi_{\eps} \in \Pi(e^{-f},e^{-h})$ denote the (half)-quadratic cost $\eps$-$\Schro$ bridge as defined in \eqref{eq:euclidean-sb}. Observe that the relative entropy between $\bar{\ell}_{\eps}$ and $\Pi_{\eps}$ can be decomposed in the following manner
    \begin{align*}
        H(\bar{\ell}_{\eps}|\Pi_{\eps}) &= \Exp{\bar{\ell}_{\eps}}\left[\log\left(\frac{d\bar{\ell}_{\eps}}{dq_{\eps}}\right)\right]-\Exp{\bar{\ell}_{\eps}}\left[\log\left(\frac{d\Pi_{\eps}}{dq_{\eps}}\right)\right] = \Exp{\bar{\ell}_{\eps}}\left[\log\left(\frac{d\bar{\ell}_{\eps}}{dq_{\eps}}\right)\right]-\Exp{\Pi_{\eps}}\left[\log\left(\frac{d\Pi_{\eps}}{dq_{\eps}}\right)\right] \\
        &= H(\bar{\ell}_{\eps}|q_{\eps}) - H(\Pi_{\eps}|q_{\eps}).
    \end{align*}
The second equality holds as $\log\left(d\Pi_{\eps}/dq_{\eps}\right)$ is the sum of a function of $x$ with a function of $y$. As $\Pi_{\eps}$ and $\bar{\ell}_{\eps}$ are both couplings of $e^{-f}$ and $e^{-h}$, we can swap the measure with respect to which we take expectation. Applying Proposition \ref{prop:sb-cost-exp-rd} to the previously computed expansion of $H(\bar{\ell}_{\eps}|q_{\eps})$ establishes \eqref{eq:mld-sb-approx}.
\end{proof}

\begin{remark}[Justification for \eqref{eq:conj-o-eps} in Conjecture]\label{remark:justification-conjecture}
    Observe that in the above proof of Theorem \ref{thm:mld-quad-sb-comparison}, the rates of convergence of the integrals over $U_1$ and $U_2$ are both $O(\eps)$. It is the inability to quantify the decay rate of the integral over $U_3$ that results in the non-quantified vanishing rate in \eqref{eq:mld-sb-approx}. With a better understanding of the heat kernel expansion in Proposition \ref{prop:heat-kernel-asymp} in the Hessian manifold setting (for instance, obtaining a remainder that holds globally), we believe this technicality can be circumvented and establish the convergence rate in Conjecture \ref{conjecture:O-eps}. 
\end{remark}

\subsection{Affine Optimal Transport}\label{subsec:affine}
Observe that the diffusion approximation provided in Theorem \ref{thm:mld-quad-sb-comparison} is two orders of magnitude weaker than that of Theorem \ref{thm:sym-rel-ent}. Even the rate in Conjecture \ref{conjecture:O-eps} is too weak to apply the generator transfer result of Theorem \ref{thm:generator-transfer}. We conclude this section with calculations that appear to indicate the geometric obstruction at play: local changes in curvature presented by the Hessian manifold.

We now insist that $\nabla \varphi$ is an affine function, i.e.\ that the Hessian geometry is flat. In this setting, the following symmetric relative entropy bound holds.
\begin{proposition}\label{prop:affine-brenier}
Let $\mu = e^{-f} \in \cP_2(\mathbb{R}^{d})$ be such that $f \in C^{3}(\mathbb{R}^{d})$, $f(x) \to +\infty$ as $x \to +\infty$, $\Ent(e^{-f})$ and $I(e^{-f}|\mathrm{Leb})$ are finite, and $e^{-f}$ has subexponential tails. Additionally, assume that $\inf \cU_{\mu} > -\infty$ and, with $\norm{\cdot}$ denoting the Euclidean norm,
\begin{align}
    \abs{\cU_{\mu}(x)} \leq C(1+\norm{x}^{N}) \text{ for all $x \in \mathbb{R}^{d}$}
\end{align}
for some $C > 0$ and $N \geq 1$. Now, let $S \in \mathbb{R}^{d \times d}$ be positive definite, $m \in \mathbb{R}^{d}$, and set $\nabla \varphi(x) := Sx+m$. Define $e^{-h} = (\nabla \varphi)_{\#}e^{-f}$. Let $\Pi_{\eps},\bar{\ell}_{\eps} \in \Pi(e^{-f},e^{-h})$ be as in Theorem \ref{thm:mld-quad-sb-comparison}. Then it holds that
    \begin{align*}
        \lim\limits_{\eps \downarrow 0}\frac{1}{\eps}\left(H(\bar{\ell}_{\eps}|\Pi_{\eps})+H(\Pi_{\eps}|\bar{\ell}_{\eps})\right) = 0. 
    \end{align*}
\end{proposition}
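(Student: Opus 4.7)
The plan is to exploit the flatness of the Hessian manifold when $\nabla\varphi$ is affine in order to reduce the proposition to the Euclidean same-marginal case, where \cite[Theorem 1]{AHMP25} (or equivalently Theorem~\ref{thm:sym-rel-ent} in its Euclidean specialization) already delivers an $o(\eps^2)$ symmetric-relative-entropy bound. To set things up, introduce the linear isometry $u = S^{1/2} x$, which maps the Hessian manifold $(\mathbb{R}^d, S)$ onto standard Euclidean space. Under this change of variables the primal MLD $X_t$ pulls back to $U_t := S^{1/2} X_t$, and a short It\^{o} computation---using that the Christoffel symbols vanish identically because the third derivatives of $\varphi$ do---identifies $U_t$ as the standard Euclidean Langevin diffusion $dU_t = -\tfrac12 \nabla \tilde f(U_t)\, dt + dW_t$ with stationary density $e^{-\tilde f(u)}$, where $\tilde f(u) := f(S^{-1/2} u) + \tfrac12 \log \det S$.

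Since $u \mapsto S^{1/2} x$ is a linear bijection and $S$ is bounded above and below, all hypotheses of the proposition transfer from $f$ to $\tilde f$ up to affine-invariant constants. Applying the Euclidean form of the main same-marginal estimate of \cite{AHMP25} to the diffusion $U_t$ therefore yields
\begin{align*}
H(\mathrm{Law}(U_0, U_\eps) \mid \tilde\Pi^\eps) + H(\tilde\Pi^\eps \mid \mathrm{Law}(U_0, U_\eps)) = o(\eps^2),
\end{align*}
where $\tilde\Pi^\eps \in \Pi(e^{-\tilde f}, e^{-\tilde f})$ denotes the quadratic-cost Euclidean same-marginal Schr\"odinger bridge. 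Now define the affine bijection $\Psi(u, v) := (S^{-1/2} u,\, S^{1/2} v + m)$ and notice that $\Psi_\# \mathrm{Law}(U_0, U_\eps) = \mathrm{Law}(X_0,\, S X_\eps + m) = \bar\ell_\eps$. Since relative entropy is invariant under measurable bijections, the preceding display transfers verbatim to $H(\bar\ell_\eps \mid \Psi_\# \tilde\Pi^\eps) + H(\Psi_\# \tilde\Pi^\eps \mid \bar\ell_\eps) = o(\eps^2)$.

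The remaining task, and the geometric heart of the argument, is to show that $\Psi_\# \tilde\Pi^\eps = \Pi_\eps$. The decisive algebraic fact is that the Radon--Nikodym derivative $\Psi_\# q_\eps / q_\eps$ factorizes into the product of a function of $x$ alone and a function of $y$ alone. Expanding
\begin{align*}
\norm{S^{-1/2}(y - m) - S^{1/2} x}^2 - \norm{y - x}^2 = \bigl[(y-m)^\top S^{-1} (y-m) - \norm{y}^2\bigr] + \bigl[x^\top (S - I) x + 2 m^\top x\bigr],
\end{align*}
the $-2\, x \cdot y$ cross terms present in both squared norms cancel exactly, so $\Psi_\# q_\eps(x, y) = q_\eps(x, y)\, \phi_1(x)\, \phi_2(y)$ for explicit Gaussian factors $\phi_1, \phi_2$. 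Consequently $\Psi_\# \tilde\Pi^\eps$ inherits a product-form density $A(x) B(y) q_\eps(x, y)$ with respect to Lebesgue, and its marginals are the pushforwards of $e^{-\tilde f}$ by $u \mapsto S^{-1/2} u$ and by $v \mapsto S^{1/2} v + m$, which equal $e^{-f}$ and $(\nabla \varphi)_\# e^{-f} = e^{-h}$ respectively. By uniqueness of the Schr\"odinger product decomposition in $\Pi(e^{-f}, e^{-h})$ with reference $q_\eps$, this forces $\Psi_\# \tilde\Pi^\eps = \Pi_\eps$, and the $o(\eps^2)$ bound transfers to give the claimed $o(\eps)$ limit.

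The principal obstacle is precisely the cross-term cancellation in the display above: it is specific to the affine setting, and its failure in the general case is what the discussion preceding the proposition identifies as the curvature obstruction responsible for the weaker rate in Theorem~\ref{thm:mld-quad-sb-comparison}. When $\nabla^2 \varphi$ is non-constant, the Riemannian squared distance $d_\varphi^2$ differs from the symmetric Bregman divergence at fourth order, as recorded in \eqref{eq:expansion-riemann-dist-symm-div}, so the analogous ratio no longer separates and the pushforward of a same-marginal Schr\"odinger bridge does not recover $\Pi_\eps$. A secondary, purely technical task is to verify that the Euclidean hypotheses pass from $f$ to $\tilde f$ under $u \mapsto S^{1/2} x$; this is straightforward because $\norm{u}$ and $\norm{x}$ are comparable and the chain rule reduces all relevant conditions (subexponentiality, the lower bound and polynomial control of $\cU_{\tilde\mu}$, finiteness of $\Ent$ and Fisher information) to their counterparts for $f$.
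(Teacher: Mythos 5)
Your geometric reduction is correct and elegant: the linear isometry $u = S^{1/2}x$ does flatten the Hessian manifold onto Euclidean space and identifies the pulled-back MLD with the standard Langevin diffusion for $\tilde f$; the cross-term cancellation in $\Psi_\#q_\eps/q_\eps$ is valid; and uniqueness of the Schr\"odinger product decomposition then gives $\Psi_\#\tilde\Pi^\eps = \Pi_\eps$ (equivalently, $\bar\pi_\eps = \Pi_\eps$). Invariance of relative entropy under bijections closes the loop. This is essentially the same structural observation the paper exploits, recast as an explicit isometric conjugation rather than a separation of terms inside the log-likelihood ratio.

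However, there is a genuine gap in the step where you invoke the Euclidean same-marginal estimate of \cite{AHMP25} (equivalently Theorem~\ref{thm:sym-rel-ent}) to conclude that the symmetric relative entropy is $o(\eps^2)$. That theorem requires the integrability hypothesis \eqref{eq:cu-integrability}, i.e.\ $C_\eps := \int_0^1\Exp{\mu_t^{\eps}}\norm{\nabla_g\cU_\mu}_g^2\,dt <+\infty$, for which the paper's stated sufficient condition (Proposition~\ref{prop:h2-int-argument}, and \cite[Assumption~1, Lemma~2]{AHMP25}) is subexponential tails \emph{together with polynomial growth of} $\norm{\nabla_g\cU_\mu}^2_g$. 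Proposition~\ref{prop:affine-brenier} assumes only $\abs{\cU_\mu(x)}\leq C(1+\norm{x}^N)$ -- a growth bound on $\cU_\mu$ itself, not on its gradient -- and this does not imply the condition you need: a polynomially bounded function can have unbounded gradient. Your last paragraph lumps this under "polynomial control of $\cU_{\tilde\mu}$," which conflates the two. The paper avoids this by taking a cruder route after the same factorization: it bounds the symmetric relative entropy by $\Exp{\Pi_\eps}[R(x,z,\eps)]$ and then applies Jensen directly, yielding $\eps\int_0^1\bigl(\Exp{}[\cU_\mu(X)] - \Exp{}[\cU_\mu(\omega_s)]\bigr)ds$, where $\omega_s$ is an explicit Gaussian bridge. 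The $\Was{p}$-convergence of $\omega_s$ to $X$ together with polynomial growth of $\cU_\mu$ alone then makes the integral $o(1)$. This only delivers $o(\eps)$ rather than $o(\eps^2)$, but that is all the proposition asserts, and it is achieved under the stated hypotheses. To make your proof correct as written, you would need to either add the gradient growth hypothesis (which would also strengthen the conclusion to $o(\eps^2)$) or replace the appeal to \cite{AHMP25} with a direct Jensen-plus-bridge argument in the Euclidean picture, which would then essentially reproduce the paper's computation.
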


\begin{proof}
This proof is follows the same structure as that of Theorem \ref{thm:sym-rel-ent}. Recall that there exists functions $a^{\eps},b^{\eps}: \mathbb{R}^{d} \to (0,+\infty)$ such that the Lebesgue density of $\Pi_{\eps}$ writes as
\begin{align}
    \Pi_{\eps}(x,y) &= a^{\eps}(x)b^{\eps}(y)q_{\eps}(x,y).
\end{align}
To write the Lebesgue density of $\bar{\ell}_{\eps}$ (note that Corollary \ref{cor:non-explosion-mld} guarantees non-explosion), define \[q_{\eps}^{S^{-1}}(x,z) := (2\pi \eps)^{-d/2}(\det S)^{1/2}\exp\left(-\frac{1}{2\eps}(z-x)^{T}S(z-x)\right)\] and the two functions
\begin{align}
    c(x,z,\eps) &= -\log\left(\Exp{W_x^{S^{-1}}}\left[\exp\left(-\int_0^{\eps} \cU_{\mu}(\omega_t)dt\right)|\omega_{\eps}=z\right]\right) \\
    R(x,z,\eps) &= -\log \left(\Exp{W_x^{S^{-1,\eps}}}\left[\exp\left(-\int_0^{1} \eps\left(\cU_{\mu}(\omega_t)-\cU_{\mu}(\omega_0)\right)dt\right)|\omega_{1}=z\right]\right),
\end{align}
where $W_x^{S^{-1},\eps}$ is the law of $dX_t = \sqrt{\eps}S^{-1/2}dB_t$ started from $x \in \mathbb{R}^{d}$ (when $\eps$ is suppressed, set $\eps = 1$). Applying a change of variables to the same Girsanov argument from Theorem \ref{thm:sym-rel-ent}, the Lebesgue density of $\bar{\ell}_{\eps}$ is then (where $z = \nabla \varphi^*(y)$)
\begin{align}\label{eq:bar-ell-leb-dens}
    \bar{\ell}_{\eps}(x,y) 
    &= \mu(x)q_{\eps}^{S^{-1}}(x,z)\exp(-c(x,z,\eps))\det \nabla^2 \varphi^*(y).
\end{align}
The log likelihood ratio between the two densities is then
\begin{align*}
    \log \left(\frac{\bar{\ell}_{\eps}(x,y)}{\Pi_{\eps}(x,y)}\right) &= \log q_{\eps}^{S^{-1}}(x,z)-\log q_{\eps}(x,y)-c(x,z,\eps)+(\text{marginal terms}).
\end{align*}
As $S$ is constant, the constant terms are included as marginal expressions as well. Recall that $z= \nabla \varphi^*(y) = S^{-1}(y-m)$. By expanding the terms in the above kernels, $(\log q_{\eps}^{S^{-1}}(x,z)-\log q_{\eps}(x,y))$ consists of terms that depend only on one of $x$ or $y$ alone and are integrable with respect to $e^{-f}$ and $e^{-h}$, respectively. As $\bar{\ell}_{\eps}$ and $\Pi_{\eps}$ have the same marginals, with $z = \nabla \varphi^*(y)$,
\begin{align*}
    H(\bar{\ell}_{\eps}|\Pi_{\eps})+H(\Pi_{\eps}|\bar{\ell}_{\eps}) &= \left(\Exp{\bar{\ell}_{\eps}}-\Exp{\Pi_{\eps}}\right)\left[\log\left(\frac{d\bar{\ell}_{\eps}}{d\Pi_{\eps}}\right)\right]=\left(\Exp{\bar{\ell}_{\eps}}-\Exp{\Pi_{\eps}}\right)\left[-c(x,z,\eps)\right].
\end{align*}
Exactly as argued in \eqref{eq:static-ell-eps-mu-reps}, it holds that $\Exp{\bar{\ell}_{\eps}}[-c(x,z,\eps)] \leq \frac{\eps}{8}I_{g}(\mu|\vol^{\varphi})$. Recall from Proposition \ref{prop:cal-u-ip-identities} that $\Exp{\mu}[\cU_{\mu}] = -\frac{1}{8}I_g(\mu|\vol^{\varphi})$. Thus, we similarly deduce $H(\bar{\ell}_{\eps}|\Pi_{\eps})+H(\Pi_{\eps}|\bar{\ell}_{\eps}) \leq \Exp{\Pi_{\eps}}[R(x,z,\eps)]$.
Finally, apply Jensen's inequality to obtain, with $Z = \nabla \varphi^*(Y)$,
\begin{align*}
    \Exp{\Pi_{\eps}}[R(x,z,\eps)] &\leq \eps \Exp{(X,Y)\sim \Pi_{\eps}}\left[\Exp{W_{X}^{S^{-1},\eps}}\left[\int_{0}^{1} \cU_{\mu}(\omega_0)-\cU_{\mu}(\omega_s)ds\middle|\omega_{1}=Z\right]\right] \\
    &= \eps \int_0^{1} \left(\Exp{(X,Y)\sim \Pi_{\eps}}[\cU_{\mu}(X)]-\Exp{(X,Y)\sim \Pi_{\eps}}\Exp{W_{X}^{S^{-1},\eps}}\left[ \cU_{\mu}(\omega_s)\middle |\omega_{1}=Z\right] \right)ds.
\end{align*}
Let $(X_{\eps},Y_{\eps}) \sim \Pi_{\eps}$, then observe that $\omega_s \sim (1-s)X_{\eps}+s\nabla \varphi^*(Y_{\eps})+\sqrt{\eps s(1-s)}S^{-1/2}Z$ for $Z \sim N(0,\Id)$. As $X_{\eps},\nabla\varphi^*(Y_{\eps}) \sim e^{-f}$, which has subexponential tails, $(X_{\eps},\nabla \varphi^*(Y_{\eps}))$ converges in $\Was{p}$ to $(X,X)$ for $X \sim e^{-f}$ as $\eps \downarrow 0$ for any $p \geq 1$. As $Z$ is also subexponential, it follows that the law of $\omega_{s}$ converges in $\Was{p}$ to $e^{-f}$ as $\eps \downarrow 0$ for any $p \geq 1$. As $\cU_{\mu}$ has polynomial growth, the integral in the above bound vanishes as $\eps \downarrow 0$, establishing the desired convergence rate.
\end{proof}

\textbf{Integrated Fisher Information Identities.}
We can better understand this disconnect with the following simple calculations. Again, fix $e^{-f}, e^{-h} \in \cP_2(\mathbb{R}^{d})$ and let $\nabla \varphi$ denote the Brenier map from $e^{-f}$ to $e^{-h}$. We are now no longer assuming that $\nabla \varphi$ is affine. Let $(\rho_t^0, t\in [0,1])$ denote the (standard) McCann interpolation from $e^{-f}$ to $e^{-h}$, and for $t \in [0,1]$ let $\nabla \varphi_{0 \to t} = (1-t)\Id + t\nabla \varphi$ and $f_t = -\log \rho_t^0$.
By the change of variables formula,
\begin{align}\label{eq:mccann-cov}
    \nabla f_{t}(\nabla \varphi_{0 \to t}(x)) &= \nabla^{-2}\varphi_{0 \to t}(x) \left(\nabla f(x) + \sum_{ij} \partial^{2}_{ij}\varphi_{t \to 0}(x_t) \partial^{3}_{ijk}\varphi_{0 \to t}(x)\right).
\end{align}
As $\nabla^2 \varphi(x)$ is positive definite, we can write $\nabla^{2} \varphi(x) = P(x) D(x) P^{-1}(x)$ for matrix valued functions $P$ and $D$, with $D(x)$ diagonal for all $x \in \mathbb{R}^{d}$. As the spatial variable has no role in the time integration, we obtain the following identity:
\begin{align*}
    \int_{0}^{1} \Exp{} \norm{\nabla^{-2}\varphi_{0 \to t}(X)\nabla f(X)}^{2}dt &= \Exp{\mu}\left[\nabla f(X)^{T}\nabla^2 \varphi^*(X^*)\nabla f(X)\right] = \Exp{\mu}\|\nabla_g f\|_{g}^2 = I_{\varphi}(\mu|\vol^{\varphi}).
\end{align*}
From \eqref{eq:mccann-cov}, when $\nabla \varphi$ is \textbf{affine} and thus $\partial^{3}_{ijk} \varphi_{0 \to t} \equiv 0$ for all $t \in [0,1]$, observe that
\begin{align}\label{eq:affine-fi-identity}
    \int_0^{1} I(\rho_t^0|\mathrm{Leb})dt &=  \int_{0}^{1} \Exp{} \norm{\nabla^{-2}\varphi_{0 \to t}(X)\nabla f(X)}^{2}dt = I_{\varphi}(\mu|\vol^{\varphi}).
\end{align}
To illustrate how \eqref{eq:affine-fi-identity} fails when we consider Hessian manifolds with non-constant metric, we simplify to the \textbf{one dimensional setting}. 
Recall that $\varphi'''_{0 \to t}(x) = t \varphi'''(x)$ and $\varphi_{0 \to t}''(x) = 1 +t(\varphi''(x)-1)$. Integrating \eqref{eq:mccann-cov} in $t$ and taking expectation gives
\begin{align*}
    \int_0^1 I(\rho_t^0|\mathrm{Leb})dt &= \Exp{\mu}\left[\frac{(f'(X))^2}{\varphi''(X)}+\frac{f'(X)\varphi'''(X)}{(\varphi''(X))^2}+\frac{1}{3}\frac{(\varphi'''(X))^2}{(\varphi''(X))^{3}}\right]
\end{align*}
Hence, the integrated Euclidean Fisher information and the Hessian Fisher information differ in their dependence on higher derivatives of the Brenier map, i.e.\ the nontrivial curvature of the Hessian manifold. With these Fisher information calculations in tow, we conclude our investigation of affine optimal transport with the following corollary. 
\begin{corollary}\label{cor:affine-brenier-map}
    In the setting and notations of Proposition \ref{prop:affine-brenier}, it holds that
    \begin{align}\label{eq:ct-expan-affine}
        H(\Pi_{\eps}|q_{\eps}) = \frac{1}{2\eps}\Was{2}^2(e^{-f},e^{-h})+\frac{1}{2}(\Ent(e^{-f})+\Ent(e^{-h}))+\frac{\eps}{8}\int_0^1 I(\rho_t^0|\mathrm{Leb})dt + o(\eps).
    \end{align}
    Assume in addition that $(\mathbb{R}^{d},S,\mu)$ satisfies $\textrm{CD}(\kappa_{\mu},+\infty)$ for some $\kappa_{\mu} \in \mathbb{R}$. Let $\cL$ denote the generator of the primal \eqref{eq:MLD}, and let $\xi \in D(\cL) \cap \mathrm{Lip}(\mathbb{R}^{d})$. In $L^2(e^{-f})$, 
    \begin{align}\label{eq:generator-ident-affine}
        \lim\limits_{\eps \downarrow 0} \frac{1}{\eps}\left(\Exp{\Pi_{\eps}}[(\xi \circ \nabla \varphi^*)(Y)|X=x]-\xi(x)\right) = \cL \xi(x).
    \end{align}
    In particular, it holds that
    \begin{align}\label{eq:score-function-affine}
        \lim\limits_{\eps \downarrow 0} \frac{1}{\eps}\left(\Exp{\Pi_{\eps}}[\nabla \varphi^*(Y)|X=x]-x\right) = -\frac{1}{2}\nabla f(x)
    \end{align}
\end{corollary}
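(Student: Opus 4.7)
For \eqref{eq:ct-expan-affine} I start from the Pythagorean-type identity
\[
H(\bar\ell_\eps|q_\eps) = H(\bar\ell_\eps|\Pi_\eps) + H(\Pi_\eps|q_\eps),
\]
valid because the optimality of $\Pi_\eps$ makes $\log(d\Pi_\eps/dq_\eps)(x,y)$ additively separable in $x,y$ and $\bar\ell_\eps,\Pi_\eps$ share the marginals $(e^{-f},e^{-h})$. Proposition \ref{prop:affine-brenier} yields $H(\bar\ell_\eps|\Pi_\eps)=o(\eps)$, reducing the task to an $o(\eps)$-precise expansion of $H(\bar\ell_\eps|q_\eps)$. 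Because $\nabla^2\varphi\equiv S$, Christoffel symbols vanish and $\bar r_\eps$ is the explicit Gaussian density centered at $\nabla\varphi(x)$ with covariance $\eps S$. The Girsanov representation established in the proof of Proposition \ref{prop:affine-brenier}, $\bar\ell_\eps(x,y) = \mu(x)\bar r_\eps(x,y)e^{-c(x,\nabla\varphi^*(y),\eps)}$, then yields
\[
H(\bar\ell_\eps|q_\eps) = \Ent(e^{-f}) + \Exp{\bar\ell_\eps}[\log(\bar r_\eps/q_\eps)] - \Exp{\bar\ell_\eps}[c(x,z,\eps)].
\]

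For the Gaussian log-ratio I would apply It\^{o}'s formula to the quadratic observables $V_\eps := \|\nabla\varphi(X_\eps)-X_0\|^2$ and $Q_\eps := (X_\eps-X_0)^T S(X_\eps-X_0)$ under the stationary primal MLD. Stationarity pins $\mathbb{E}[(\nabla\varphi(X_t)-X_t)^T\nabla f(X_t)]$ at the constant $\mathrm{tr}(S)-d$ (by integration by parts against $\mu$), while an It\^{o}--Taylor expansion using $\Exp{\mu}[\mathrm{tr}(\nabla^2 f\cdot S^{-1})]=I_\varphi(\mu|\vol^\varphi)$ produces $\mathbb{E}[V_\eps] = \Was{2}^2(e^{-f},e^{-h}) + \eps d - \tfrac{\eps^2}{4}I_\varphi(\mu|\vol^\varphi) + o(\eps^2)$ and analogously $\mathbb{E}[Q_\eps] = \eps d - \tfrac{\eps^2}{4}I_\varphi(\mu|\vol^\varphi) + o(\eps^2)$, so the $\eps$-order corrections cancel and $\tfrac{1}{2\eps}(\mathbb{E}[V_\eps]-\mathbb{E}[Q_\eps]) = \tfrac{1}{2\eps}\Was{2}^2(e^{-f},e^{-h})+o(\eps)$. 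The constant $-\tfrac12\log\det S$ combines with $\Ent(e^{-f})$ via the change-of-variables identity $\log\det S = \Ent(e^{-f})-\Ent(e^{-h})$ into $\tfrac12(\Ent(e^{-f})+\Ent(e^{-h}))$. A first-order Feynman--Kac expansion, together with Proposition \ref{prop:cal-u-ip-identities}, delivers $\Exp{\bar\ell_\eps}[c(x,z,\eps)] = \eps\Exp{\mu}[\cU_\mu]+o(\eps) = -\tfrac{\eps}{8}I_\varphi(\mu|\vol^\varphi)+o(\eps)$, and \eqref{eq:affine-fi-identity}---which holds precisely because $\partial^3\varphi\equiv 0$ in the affine setting---converts $I_\varphi(\mu|\vol^\varphi)$ into $\int_0^1 I(\rho_t^0|\Leb)dt$.

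For \eqref{eq:generator-ident-affine} I would set $\alpha^\eps := (\Id,\nabla\varphi^*)_\#\Pi_\eps$ and $\beta^\eps := \ell_\eps$, both in $\Pi(e^{-f},e^{-f})$. Diffeomorphism invariance of relative entropy gives $H(\alpha^\eps|\beta^\eps) = H(\Pi_\eps|\bar\ell_\eps) = o(\eps)$ by Proposition \ref{prop:affine-brenier}, so Theorem \ref{thm:generator-transfer} applies under the given curvature-dimension hypothesis and yields \eqref{eq:generator-ident-affine} after observing $\Exp{\alpha^\eps}[\xi(Z)|X=x]=\Exp{\Pi_\eps}[(\xi\circ\nabla\varphi^*)(Y)|X=x]$. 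Applying this coordinate-wise to the Lipschitz functions $\xi_i(x) = x_i$ (which lie in $D(\cL)$ since the finiteness of $I_\varphi(\mu|\vol^\varphi)$ forces $\cL\xi_i\in L^2(e^{-f})$) recovers \eqref{eq:score-function-affine}, as the vanishing Christoffel symbols erase the second-order part of $\cL$ on linear functions, leaving only the drift component. The primary obstacle is the $o(\eps)$ expansion of $H(\bar\ell_\eps|q_\eps)$: Theorem \ref{thm:mld-quad-sb-comparison} only supplies an $o(1)$ remainder after dividing by $\eps$, and extracting the exact $\tfrac{\eps}{8}\int_0^1 I(\rho_t^0|\Leb)dt$ coefficient demands careful tracking of both the cancellation between the two It\^{o}-derived quadratic moments and the Feynman--Kac corrector.
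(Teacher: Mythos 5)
Your proposal follows the paper's proof essentially step for step: the Pythagorean decomposition $H(\Pi_\eps|q_\eps) = H(\bar\ell_\eps|q_\eps) - H(\bar\ell_\eps|\Pi_\eps)$ with $H(\bar\ell_\eps|\Pi_\eps) = o(\eps)$ from Proposition~\ref{prop:affine-brenier}, the density expression \eqref{eq:bar-ell-leb-dens} for $\bar\ell_\eps$, the change-of-variables identity $\log\det S = \Ent(e^{-f}) - \Ent(e^{-h})$ to merge the constants, the identity \eqref{eq:affine-fi-identity} to convert $I_\varphi(\mu|\vol^\varphi)$ into $\int_0^1 I(\rho_t^0|\Leb)\,dt$, and the identical invocation of Theorem~\ref{thm:generator-transfer} on $\alpha^\eps = (\Id,\nabla\varphi^*)_\#\Pi_\eps$, $\beta^\eps = \ell_\eps$ for \eqref{eq:generator-ident-affine}--\eqref{eq:score-function-affine}.

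The one place you diverge is the Gaussian log-ratio. You plan a two-term It\^o--Taylor expansion of $V_\eps = \|\nabla\varphi(X_\eps)-X_0\|^2$ and $Q_\eps = (X_\eps-X_0)^TS(X_\eps-X_0)$ to order $\eps^2$ and then argue that the $\eps$- and $\eps^2$-corrections cancel. This works (your integration-by-parts identities for the moment coefficients are correct), but it re-derives with considerably more effort a cancellation that the paper gets \emph{exactly} and for free: in the affine case $d_\varphi^2(X_0,X_\eps)=Q_\eps$ is precisely the symmetrized Bregman divergence, and the reversibility computation already carried out in the proof of Theorem~\ref{thm:mld-quad-sb-comparison} gives $\Exp{}[Q_\eps - V_\eps] = -\Was{2}^2(e^{-f},e^{-h})$ with no remainder whatsoever. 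Your closing worry --- that ``extracting the exact $\tfrac{\eps}{8}\int_0^1 I(\rho_t^0|\Leb)\,dt$ coefficient demands careful tracking of both the cancellation between the two It\^o-derived quadratic moments and the Feynman--Kac corrector'' --- dissolves once you use this exact identity; the only residual asymptotic work is then the corrector $\Exp{\bar\ell_\eps}[-c]$, which is handled by writing $c = \eps\,\cU_\mu(x) + R(x,z,\eps)$, using $\Exp{\mu}[\cU_\mu] = -\tfrac18 I_\varphi$ from Proposition~\ref{prop:cal-u-ip-identities}, and observing that $(\Exp{\bar\ell_\eps}-\Exp{\Pi_\eps})[-c]$ and $\Exp{\Pi_\eps}[R]$ are both $o(\eps)$ from Proposition~\ref{prop:affine-brenier}. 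So: same route, correct in outline, but the It\^o machinery in your treatment of the Gaussian term is avoidable.
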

Observe that \eqref{eq:ct-expan-affine} matches the expansion provided by \cite[Theorem 1.6]{conforti21deriv}.

\begin{proof}
As argued in the proof of Theorem \ref{thm:mld-quad-sb-comparison}, it holds that
\begin{align*}
    H(\Pi_{\eps}|q_{\eps}) &= H(\bar{\ell}_{\eps}|q_{\eps})-H(\bar{\ell}_{\eps}|\Pi_{\eps}).
\end{align*}
First, recall the rate for $H(\bar{\ell}_{\eps}|\Pi_{\eps})$ in Proposition \ref{prop:affine-brenier}.
The expansion in \eqref{eq:ct-expan-affine} then follows from plugging in \eqref{eq:bar-ell-leb-dens}, recalling that $\Exp{\bar{\ell}^{\eps}}[-c(x,\nabla\varphi^*(z),\eps)] \leq \frac{\eps}{8}I_{\varphi}(\mu|\vol^{\varphi})$, and then applying the identity \eqref{eq:affine-fi-identity}. As for \eqref{eq:generator-ident-affine}, this follows from Theorem \ref{thm:generator-transfer}. In the notation of Theorem \ref{thm:generator-transfer}, set $\alpha^{\eps} = (\Id,\nabla \varphi^*)_{\#}\Pi_{\eps}$ and $\beta^{\eps} = \text{Law}(X_0,X_{\eps})$. As $(\Id,\nabla \varphi^*)$ is invertible, $H(\alpha^{\eps}|\beta^{\eps}) = H(\Pi_{\eps}|\bar{\ell}_{\eps})$ and thus \eqref{eq:generator-ident-affine} holds for all Lipshitz $\xi \in \mathrm{Lip}(\mathbb{R}^{d}) \cap D(\cL)$. Picking $\xi$ to be the coordinate functions $\xi(x) = x_i$ establishes \eqref{eq:score-function-affine}.
\end{proof}

\bibliographystyle{alpha}
\bibliography{sample}

\section{Appendix}\label{sec:appendix}
\subsection{Deferred Proofs}
\begin{proof}[Proof of Proposition \ref{prop:heat-kernel-asymp}]
    The existence of $c_0(\cdot,\cdot), R(\eps,\cdot,\cdot)$ such that \eqref{eq:heat-kernel-expan} with the stipulated properties is established in \cite[Theorem 1.4]{neel2025uniformlocalizedasymptoticssubriemannian}. The identity in \eqref{eq:grad-log-heat-ker-exp} follows from the fact that $\nabla_x \frac{1}{2}d^2(x,z) = - \log_x z$ when $(x,z) \notin \cC$. The identity for $c_{0}(x,z)$ is historically attributed to \cite{molchanov-exp75}, and it is also called the Pauli-Van Vleck-Morette determinant. The stated limits at coincidence (i.e.\ limits as $z \to x$) of $-\frac{1}{2}\log c_0$ and its gradient in $x$ are well-known in the physics literature. For example, they are given in \cite[Equation (A.20)]{avramidi16} (note one must apply the identity in \cite[Equation (A.1)]{avramidi16} so that the gradient is computed with respect to $x$ and not $z$). 
    For more comprehensive information on the Van Vleck-Morette determinant, we refer readers to the survey \cite[Section 7]{poisson11-vanvleckdet} and text \cite[Section 3.6.3]{avramidi15-book}.
\end{proof}

\begin{proof}[Proof of Proposition \ref{prop:ex-uniq-nonexp-mld}]
Fix a global chart as stipulated in (H2), and let $\cL$ denote the generator of this strictly elliptic diffusion on $\mathbb{R}^{d}$. There is existence and uniqueness of a weak solution (given an initial distribution) to \eqref{eq:hess-mani-sde-mld} up to an explosion time $e(X): \Omega \to [0,+\infty]$. 

To see that the explosion time is a.s.\ infinite for any initial data, we simply repeat the proof of \cite[Theorem 2.2.19]{royer-lsi}. Fix $x_0 \in \mathbb{R}^{d}$ and consider \eqref{eq:hess-mani-sde-mld} with initial condition $X_0 = x_0$. Fix $R > 0$ and define the stopping time $T_{R} = \inf\{t: U(X_t) > R\}$. As $T_{R} < e(X)$, by Dynkin it holds that
\begin{align}\label{eq:exp-bdd-u-cu}
    \Exp{}[U(X_{t \wedge T_{R}})-U(x_0)] = -\Exp{}\left[\int_{0}^{t \wedge T_{R}} \cL U(X_s)ds\right].  
\end{align}
As $\cL U = -\frac{1}{2}\norm{\nabla_g U}_g^2 + \frac{1}{2}\Delta_g U$, $\cL U$ has a global upper bound by \eqref{assumption:potent}. The proof of \cite[Theorem 2.2.19]{royer-lsi} now follows without any changes to establish nonexplosion. 
\end{proof}

\begin{proof}[Proof of Proposition \ref{prop:rn-deriv-diffusion}]
Under Assumption \ref{assumption:manifold} (H1), \eqref{eq:rn-path-measure} is computed in \cite[Proposition 2.11]{lavenant-traj-inf}. Now, let $(M,g)$ satisfy (H2) and fix a global coordinate system $(x^1,\dots,x^d)$. We will now identify $M$ with this global chart $\mathbb{R}^{d}$ and simply perform all computations over Euclidean space. 
We have remarked already that under (H2) the manifold Brownian motion, i.e.\ the solution to \eqref{eq:local-mani-bm} (which now holds globally), exists, is unique, and has a.s.\ infinite explosion time. In this global chart, recall that \eqref{eq:mani-diff-sde} now writes as \eqref{eq:manifold-diff-global-chart}.

Fix some $x \in \mathbb{R}^{d}$. First, we assume that each $\partial_{x_i}U$ is bounded on $\mathbb{R}^{d}$. Since $\partial_{x_i} U$ is bounded, it follows from \cite[Chapter IX, Theorem 1.11]{revuz2004continuous} that there exists a unique solution to \eqref{eq:manifold-diff-global-chart} (in their notation, set $f = g^{-1/2}$, $g = -\frac{1}{2}g^{ij}\Gamma_{ij}^{k}$, and $h = -\frac{1}{2}\partial_{x_i}U$). 
Moreover, the Radon-Nikodym derivative has the following expression. In the language of \cite[Chapter IX, Theorem 1.10]{revuz2004continuous}, set $a(x) = g^{-1}(x)$, $b(x) = -\frac{1}{2}g^{ij}(x)\Gamma_{ij}^{k}(x)$, and $c(x) = -\frac{1}{2}\partial_{x_i}U(x)$.
Altogether then, from \cite[Chapter IX, Theorem 1.11]{revuz2004continuous} it holds that 
\begin{align}\label{eq:ry-ix-1-10}
    \frac{dP_x}{dW_x}(\omega) &= \exp\left(\int_0^{\eps} c(\omega_t)^{T}d\omega_t - \int_0^{\eps} c(\omega_t)^{T} b(\omega_t)dt-\frac{1}{2}\int_0^{\eps} c(\omega_t)^{T}a(\omega_t)c(\omega_t)dt\right).
\end{align}
To remove the stochastic integral appearing in \eqref{eq:ry-ix-1-10}, apply $\Ito$'s formula to obtain
\begin{align}\label{eq:girs-global}
    \frac{dP_x}{dW_x} &= \sqrt{\frac{e^{-U(\omega_{\eps})}}{e^{-U(x)}}}\exp\left(\int_0^{\eps} \frac{1}{4}\Tr\left(g^{ij}(\omega_t)\frac{\partial^2}{\partial x^j \partial x^k}U(\omega_t)\right)-c(\omega_t)^{T} b(\omega_t)-\frac{1}{2}c(\omega_t)^{T}a(\omega_t)c(\omega_t)dt\right).
\end{align}
Let's analyze the terms in the right hand side of \eqref{eq:girs-global}. First, observe that $-\frac{1}{2}c(x)^{T}a(x)c(x) = -\frac{1}{8}\norm{\nabla_g U}_{g}^2(x)$.
Similarly, the next two terms write as
\begin{align*}
    \frac{1}{4}\Tr\left(g^{ij}(x)\frac{\partial^2}{\partial x^j \partial x^k}U(x)\right) &= \frac{1}{4}g^{ij}(x)\frac{\partial^2}{\partial x^j \partial x^i}U(x), \text{ }-c(x)^{T}b(x) = \frac{1}{4} \frac{\partial}{\partial x^{k}}U(x)g^{ij}(x)\Gamma_{ij}^{k}(x). 
\end{align*}
From \eqref{eq:laplace-beltrami-coords},
the right hand side of \eqref{eq:girs-global} has the stated form when $\partial_{x_i}U$ is globally bounded.

Now, consider a general $U$ such that \eqref{eq:manifold-diff-global-chart} does not explode. To generalize the above formula, we parrot the argument from \cite[Lemma 2.2.21]{royer-lsi}. Let $(\chi_n, n \geq 1) \subset C_c^{\infty}(\mathbb{R}^d)$ be sequence of smooth cutoff functions on the balls of radius $n$ centered at the origin. Set $U_n := \chi_n U$, and let $P^n_x$ denote the law on $C([0,T],\mathbb{R}^{d})$ of the solution to \eqref{eq:mani-diff-sde} with drift equal to $-\frac{1}{2}\nabla_g U_n$ started from $x$. The previous paragraph gives an explicit computation for each $dP^n_x/dW_x$. Following the exact line of argumentation in the last paragraph of \cite[Lemma 2.2.21]{royer-lsi} establishes \eqref{eq:rn-path-measure} for this general $U$. 
\end{proof}

We now justify the Entropic Benamou-Brenier formulation of entropic cost in our setting. 
\begin{proof}[Proof of Proposition \ref{prop:entropic-bb}]
Under (H1) with $\mathrm{CD}(K,N)$ for $N < +\infty$, this is exactly the conclusion of \cite[Theorem 4.1]{gigli-bb-ent-rcd}. The class of absolutely continuous curves considered is $(\mu_t,v_t, t \in [0,1])$ for which there is some $C > 0$ such that $\mu_t \leq C\diffref$ for all $t \in [0,1]$ and $t \mapsto \int_{M} \norm{v_t}_g^2 d\mu_t$ is Borel and in $L^1(0,1)$.
When $N = +\infty$ and $\diffref(M) = 1$, \cite[Remark 2.5]{chiarini2022gradient} demonstrates that the exact same proof holds. In our setting, recall that Assumption \ref{assumption:diffusions} forces that we consider fully supported measures on $M$. Hence, the results from \cite{gigli-bb-ent-rcd} only apply when $M$ is compact as the setting in \cite{gigli-bb-ent-rcd} requires that $\mu$ and $\nu$ have both bounded support and densities with respect to $\diffref$.   

In the case that $M$ is not compact but rather given by a global chart of $\mathbb{R}^{d}$ satisfying Assumption \ref{assumption:manifold} (H2), by mirroring the arguments provided in \cite[Section 5]{gentil2017analogy} (particularly Theorem 5.1, Corollary 5.8 therein) we arrive at the same conclusion. The minimum is taken over all absolutely continuous curves on $\cP_2(M)$ over the time interval $[0,1]$ from $\mu$ to $\nu$. We now identify $M$ with $\mathbb{R}^{d}$. Recall the definition of the dynamic $\Schro$ bridge $P^{\eps} \in \cP(C([0,1],\mathbb{R}^{d}))$ given in \eqref{eq:dynam-schro-bridge-defn}. By assumption, $P^{\eps}$ exists and $H(P^{\eps}|R^{\eps})$ is finite. Additionally, $R^{\eps}$ is a Markov measure. By \cite[Proposition 2.10]{schroLeonard13}, we can restrict the feasible set in \eqref{eq:dynam-schro-bridge-defn} to
\begin{align}\label{eq:dynam-schro-bridge-defn-finite-entropy}
   \left\{P \in \cP(C([0,1],\mathbb{R}^{d})): (\omega_0,\omega_1)_{\#}P\in \Pi(\mu,\nu),P \text{ is Markov}, H(P|R^{\eps}) \text{ is finite}\right\}.
\end{align}
Fix some $P \in \cP(C([0,1],\mathbb{R}^{d}))$ in the above constraint set. As $H(P|R^{\eps})$ is finite, applying Girsanov's Theorem as developed in \cite[Theorem 2.1, Theorem 2.3]{leonard2014some} gives the existence of a drift $(Z_t, t \in [0,1])$ such $P$ is the law of the following SDE (written as a manifold SDE):
\begin{align}
    dX_t &= Z_t(X_t) dt + dB_t^{M,\eps}, X_0 \sim \mu. 
\end{align}
Here, $(B_t^{M,\eps},t \in [0,1])$ is the $\frac{\eps}{2}\Delta_g$-diffusion process. Let $(\mu_t, t \in [0,1]) \subset \cP(M)$ denote the marginal flow of $P$. Set  $\rho_t = \frac{d\mu_t}{d\vol}$ and $v_t = Z_t - \frac{\eps}{2}\nabla_g \log \rho_t$. The key fact is to recognize that $(\mu_t,v_t, t \in [0,1])$ satisfies the continuity equation on the manifold. By performing these calculations on the global chart (thus reducing everything to Euclidean calculations), the rest of the argument of \cite[Theorem 5.1, Corollary 5.8]{gentil2017analogy} proceeds identically. 
\end{proof}

\begin{proposition}[LDP rate function transfer]\label{prop:ldp-rate-diff}
    Let $(M,g)$ be a complete Riemannian manifold with lower bounded Ricci curvature. Fix $U \in C^2(M)$ and and $x \in M$. Let $(R_x^{\eps}, \eps > 0)$ denote the laws on $C([0,1],M)$ corresponding to diffusions with generators $\cL^\eps = \eps\left(-\frac{1}{2}\nabla_g U \cdot \nabla_g + \frac{1}{2}\Delta_{g}\right)$ started from $x$. If $\inf\limits_{M} \cU > -\infty$ and $\inf\limits_M U > -\infty$, then $(R_x^{\eps},\eps > 0)$ satisfies a Large Deviation Principle (LDP) on $C([0,1],M)$ with good rate function
    \begin{align}\label{eq:ldp-geod}
        I(\omega) &= \begin{cases}
            \frac{1}{2} \int_{0}^{1} \norm{\dot{\omega}_t}_{g}^2 dt & \text{ if $\omega_0 = x$ and $\omega \in H^1(M)$} \\
            +\infty & \text{otherwise}
        \end{cases}
    \end{align}
    Now, assume in addition that $M$ is compact. Let $\mu = \exp(-U)d\vol$ denote the stationary measure of the diffusion, and suppose that $\mu \in \cP(M)$. Let $(R^{\eps}, \eps > 0)$ denote the laws on $C([0,1],M)$ corresponding to the temperature $\eps$ diffusion with initial distribution $\mu$. Then $(R^{\eps}, \eps > 0)$ satisfies an LDP with good rate function
    \begin{align}\label{eq:ldp-stationary}
        I(\omega) &= \begin{cases}
            \frac{1}{2} \int_{0}^{1} \norm{\dot{\omega}_t}_{g}^2 dt & \text{ if $\omega \in H^1(M)$} \\
            +\infty & \text{otherwise}
        \end{cases}
    \end{align}
    Hence, setting $\ell_{\eps} := (\omega_0,\omega_1)_{\#}R^{\eps}$ it holds that $(\ell_{\eps}, \eps > 0)$ satisfies an LDP  on $\cP(M \times M)$ with good rate function $\frac{1}{2}d^2(\cdot,\cdot)$. 
\end{proposition}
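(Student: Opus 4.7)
The plan is four-step: first, establish a sample-path LDP for the rescaled manifold Brownian motion $W_x^\eps$; second, transfer the LDP to $R_x^\eps$ via the Radon-Nikodym formula of Proposition \ref{prop:rn-deriv-diffusion}; third, integrate over the stationary initial distribution on compact $M$ to pass to $R^\eps$; and fourth, apply the contraction principle to the continuous endpoint map $\omega \mapsto (\omega_0, \omega_1)$. The first step is classical: under a lower Ricci bound, the manifold Brownian motion is non-explosive, and $W_x^\eps$ satisfies an LDP on $C([0,1], M)$ with good rate function $I_x$ as in \eqref{eq:ldp-geod} (see \cite{hsu-stoch-analysis-manifold} for a textbook treatment). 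Exponential tightness follows from standard second-moment bounds on $d(B^M_\eps, x)$, and the rate function is good since its sublevel sets consist of equicontinuous paths confined to a compact metric ball via Hopf-Rinow.

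For the second step, time-rescaling Proposition \ref{prop:rn-deriv-diffusion} yields
\begin{align*}
\frac{dR_x^\eps}{dW_x^\eps}(\omega) = \sqrt{\frac{e^{-U(\omega_1)}}{e^{-U(x)}}}\exp\left(-\eps\int_0^1 \cU(\omega_t)\, dt\right),
\end{align*}
so $\eps\log(dR_x^\eps/dW_x^\eps)(\omega) = \tfrac{\eps}{2}(U(x) - U(\omega_1)) - \eps^2\int_0^1 \cU(\omega_t)\, dt$. A path $\omega$ in a sublevel set $\{I_x \leq L\}$ satisfies $d(\omega_t, x) \leq \sqrt{2L}$ by Cauchy-Schwarz, so it is confined to the compact ball $\bar B_g(x, \sqrt{2L})$ (Hopf-Rinow); on this compact set both $U$ and $\cU$ are bounded, which makes $\eps\log(dR_x^\eps/dW_x^\eps)(\omega) = o(1)$ uniformly over the sublevel set. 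The standard Laplace-type argument then transfers the LDP from $W_x^\eps$ to $R_x^\eps$: for the upper bound on a closed $F$, split $F = (F \cap \{I_x \leq L\}) \cup \{I_x > L\}$ and combine the $o(1)$ uniform control on the first piece with the crude pointwise bound $dR_x^\eps/dW_x^\eps \leq C_x e^{-U(\omega_1)/2} \leq C_x e^{-\inf_M U/2}$ (finite by the hypothesis $\inf_M U > -\infty$) together with the $W_x^\eps$ upper bound on the second piece, then send $L \to \infty$. The matching lower bound on open sets comes from restricting to paths of bounded energy along which the Radon-Nikodym factor converges to $1$.

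For the stationary case on compact $M$, the above estimates become uniform in $x \in M$ since $U$ and $\cU$ are bounded on $M$, and $\mu$ is equivalent to $\vol$ with a bounded density ratio. It follows that $R^\eps = \int R_x^\eps\, \mu(dx)$ inherits an LDP with rate function $\inf_{x \in M} I_x(\omega)$, which equals $\tfrac{1}{2}\int_0^1 \|\dot\omega_t\|_g^2\, dt$ for $\omega \in H^1$ and $+\infty$ otherwise, consistent with \eqref{eq:ldp-stationary}. Finally, the contraction principle applied to the continuous evaluation map $\omega \mapsto (\omega_0, \omega_1)$ yields the LDP for $\ell_\eps$ on $M \times M$ with rate function $\inf\{\tfrac{1}{2}\int_0^1 \|\dot\omega_t\|_g^2\, dt: \omega \in H^1,\ \omega_0 = x,\ \omega_1 = y\} = \tfrac{1}{2}d^2(x, y)$, the last equality using the existence of a length-minimizing geodesic guaranteed by Hopf-Rinow. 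The main technical obstacle is the quantitative control of the Radon-Nikodym derivative outside a sublevel set of $I_x$ in the non-compact part of Step 2; this is exactly what the hypotheses $\inf_M U, \inf_M \cU > -\infty$ allow, since they ensure the crude bound on $dR_x^\eps/dW_x^\eps$ is $O(1)$ in $\eps$ while exponential tightness of $W_x^\eps$ provides the required decay on the complementary set.
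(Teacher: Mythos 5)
Your proposal follows the same broad arc as the paper's proof (Schilder-type LDP for the manifold Brownian motion, transfer to $R_x^\eps$ via the path-space Radon--Nikodym formula of Proposition~\ref{prop:rn-deriv-diffusion}, stationary case, then contraction), but diverges in two places where the execution is worth noting. For the transfer step, the paper first shows $(R_x^\eps)_\eps$ is exponentially tight (the key observation being $dR_x^\eps/dW_x^\eps \leq C$ uniformly in $\eps$ small, thanks to the lower bounds on $U$ and $\cU$, combined with the $W_x^\eps$ upper bound and \cite[Exercise 4.1.10(c)]{dz-large-dev}), then proves the \emph{weak} LDP by localizing to a tube around a fixed finite-energy path where the RN derivative is bounded below, and finally invokes \cite[Lemma 1.2.18]{dz-large-dev}. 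Your alternative --- splitting a closed $F$ by the sublevel set $\{I_x \leq L\}$ and sending $L \to \infty$ --- is in the same spirit, but you should be careful: $\{I_x > L\}$ is \emph{open} (by lower semicontinuity of $I_x$), so the $W_x^\eps$ upper bound cannot be applied to it directly; an intermediate compactification is needed, which is precisely what the exponential-tightness-plus-weak-LDP route packages cleanly. For the stationary case, you treat $R^\eps$ as the mixture $\int R_x^\eps\,\mu(dx)$ and assert the rate $\inf_x I_x$; the paper instead uses that \cite[Theorem 5.1]{krajj-ldp-mani-19} already gives the LDP for the $\mu$-started Wiener measure $W_\mu^\eps$ and transfers by the two-sided comparison $cW_\mu^\eps \leq R^\eps \leq CW_\mu^\eps$, which is immediate from Proposition~\ref{prop:rn-deriv-diffusion} on compact $M$. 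The mixture-LDP step you invoke is true but requires a uniform-in-$x$ LDP and a Chaganty/Dawson--G\"{a}rtner-type argument to justify, so the paper's density comparison is the more economical path. The contraction-principle finish is identical in both.
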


\begin{proof}
    Fix $x \in M$. Let $W_x^{\eps} \in \cP(C([0,1],M))$ denote Wiener measure started from $x$ with temperature $\eps$, i.e.\ the law of the process corresponding to $\frac{\eps}{2}\Delta_g$. This is the case $U = 0$. It is known that the LDP for $(W_x^{\eps}, \eps > 0)$ holds with rate function \eqref{eq:ldp-geod}, provided $M$ is complete and has lower bounded Ricci curvature \cite[Theorem 5.1]{krajj-ldp-mani-19}. 

    We now transfer this result to the $(R^{\eps}_x, \eps > 0)$. To start, we show the collection is exponentially tight.
    Let $K \in \mathbb{R}$ be such that $K < \cU(x), U(x)$ for all $x \in M$. From Proposition \ref{prop:rn-deriv-diffusion}, 
    \begin{align*}
        \frac{dR_{x}^{\eps}}{dW_x^{\eps}}(\omega) &= \exp\left(-\frac{1}{2}U(\omega_{\eps})+\frac{1}{2}U(x)-\int_0^{\eps} \cU(\omega_t)dt\right) \leq \exp\left(-\frac{1}{2}K + \frac{1}{2}U(x)-K\eps\right).
    \end{align*}
    That is, there is some $C > 0$ such that $dR_x^{\eps}/dW_x^{\eps} \leq C$ for all small enough $\eps$. Let $\Gamma \subset C([0,1],M)$ be Borel, then it holds that
    \begin{align}\label{eq:ldp-upper-bound}
        \limsup\limits_{\eps \downarrow 0} \eps \log R_x^{\eps}[\Gamma] &\leq \limsup\limits_{\eps \downarrow 0} \eps \left(\log C + \log W_x^{\eps}[\Gamma]\right) \leq -\inf\limits_{\omega \in \overline{\Gamma}} I(\omega),
    \end{align}
    where the last inequality follows from the LDP for $(W_x^{\eps},\eps > 0)$. As $I$ is a good rate function \cite[Section 1.2]{dz-large-dev} and $C([0,1],M)$ is Polish, it follows from \cite[Exercise 4.1.10(c)]{dz-large-dev} that $(R_x^{\eps}, \eps > 0)$ is exponentially tight.

    Now, we just show that $(R_x^{\eps}, \eps > 0)$ satisfies a weak LDP with $I$ \cite[Section 1.2]{dz-large-dev}. Fix $\omega \in C([0,1],M)$ such that $I(\omega) < +\infty$. Let $\Gamma \subset C([0,1],M)$ be measurable such that $\omega \in \mathrm{int}(\Gamma)$. Define \[B = \left\{\eta \in C([0,1],M): \sup\limits_{t \in [0,1]} d(\eta_t,\omega_t) < 1\right\},\] then there is some $c > 0$ such that $\left.\frac{dR_x^\eps}{dW_x^\eps}\right|_{B} \geq c$ as all paths in $B$ stay within a compact subset of $M$. As $(W_{x}^{\eps}, \eps > 0)$ satisfies the weak LDP with $I$, 
    \begin{align*}
        \liminf\limits_{\eps \downarrow 0} \eps \log \Exp{R_{x}^{\eps}}[\Gamma] &\geq \liminf\limits_{\eps \downarrow 0} \eps \log \Exp{R_{x}^{\eps}}[\Gamma \cap B] \geq \liminf\limits_{\eps \downarrow 0} \eps \left(\log c + \log \Exp{W_{x}^{\eps}}[\Gamma \cap B]\right) \geq -I(\omega). 
    \end{align*}
    Lastly, \eqref{eq:ldp-upper-bound} implies the upper bound condition for weak LDP. 
    By \cite[Lemma 1.2.18]{dz-large-dev} the full LDP holds, establishing \eqref{eq:ldp-geod}.

    Now, assume in addition that $M$ is compact. A similar argument establishes \eqref{eq:ldp-stationary}. First, let $W_{\mu}^{\eps} \in \cP(C([0,1],M))$ denote temperature $\eps$ Wiener measure on $M$ with initial distribution $\mu$, then by \cite[Theorem 5.1]{krajj-ldp-mani-19} the rate function governing the LDP of $(W_{\mu}^{\eps}, \eps > 0)$ is \eqref{eq:ldp-stationary}. By Proposition \ref{prop:rn-deriv-diffusion}, there are constants $c,C > 0$ such that $cW_{\mu}^{\eps} \leq R^{\eps} \leq CW_{\mu}^{\eps}$ as $U,\cU$ are continuous and $M$ is compact. Thus, for any Borel $\Gamma \subset C([0,1],M)$ and $\eps > 0$ it holds that
    \begin{align*}
       \eps \log c + \eps \log W_{\mu}^{\eps}[\Gamma] \leq \eps \log R^{\eps}[\Gamma] \leq \eps \log C + \eps \log W_{\mu}^{\eps}[\Gamma], 
    \end{align*}
    from which the LDP for $(R^{\eps}, \eps > 0)$ follows from that of the $(W_\mu^{\eps}, \eps > 0)$. Finally then, the last claim regarding the LDP of the $(\ell_{\eps}, \eps > 0)$ follows from the contraction principle \cite[Theorem 4.2.1]{dz-large-dev} and the definition of geodesic distance. 
\end{proof}

\begin{proposition}[Diffusion Moment Bound]\label{prop:diff-fourth-moment}
    On $(M,g)$ consider $\mu = \exp(-U_{\mu})\vol \in \cP_2(M)$ in the setting dictated by Assumption \ref{assumption:manifold} (H2). Consider the manifold SDE \eqref{eq:intro-target-process} with drift $-\frac{1}{2}\nabla_g U_{\mu}$ and initial distribution $\mu$. Let $\ell_{\eps} = \mathrm{Law}(Y_0,Y_{\eps})$. If $\Exp{\mu}\left[\norm{\nabla_g U_{\mu}}^{p}_{g}\right] < +\infty$ for some $p \geq 2$, then $\limsup\limits_{\eps \downarrow 0}\eps^{-p/2}\Exp{\ell_{\eps}}[d^{p}(x,z)] < +\infty$. 
\end{proposition}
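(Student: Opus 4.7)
The plan is to transfer the estimate into the fixed global chart supplied by Assumption~\ref{assumption:manifold}~(H2) and then apply standard SDE $L^p$ estimates, using stationarity to absorb the unbounded part of the drift. Identifying $M$ with $\mathbb{R}^d$ through the chart, the hypothesis $\alpha \Id \leq g \leq \beta \Id$ gives the bilipschitz comparison $\sqrt{\alpha}\,\|x-z\| \leq d(x,z) \leq \sqrt{\beta}\,\|x-z\|$, so it suffices to prove
\[
    \limsup_{\eps \downarrow 0} \eps^{-p/2}\, \mathbb{E}\bigl[\|Y_\eps - Y_0\|^p\bigr] < +\infty.
\]

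In the global chart the SDE \eqref{eq:intro-target-process} becomes \eqref{eq:manifold-diff-global-chart} with drift
$b(x) = -\tfrac{1}{2} g^{kj}(x)\partial_j U_\mu(x) - \tfrac{1}{2} g^{ij}(x)\Gamma^k_{ij}(x)$
and diffusion coefficient $\sigma(x) = \sqrt{g^{-1}(x)}$. Under (H2), the first and third terms of $\sigma\sigma^T = g^{-1}$ and of the ``curvature piece'' $g^{ij}\Gamma^k_{ij}$ are uniformly bounded (the latter because the first derivatives of $g$ are bounded), while the remaining piece of the drift satisfies $|g^{kj}(x)\partial_j U_\mu(x)| \leq C\,\|\nabla_g U_\mu(x)\|_g$ thanks to the two-sided bound on $g$. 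Write
\[
    Y_\eps - Y_0 = \int_0^\eps b(Y_s)\,ds + \int_0^\eps \sigma(Y_s)\,dB_s.
\]

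Now I would estimate the two contributions separately. For the martingale part, the Burkholder--Davis--Gundy inequality with $p \geq 2$ gives
\[
    \mathbb{E}\!\left[\,\left\|\int_0^\eps \sigma(Y_s)\,dB_s\right\|^p\right] \leq C_p\, \mathbb{E}\!\left[\left(\int_0^\eps \mathrm{tr}(g^{-1}(Y_s))\,ds\right)^{p/2}\right] \leq C_p\,(d/\alpha)^{p/2}\,\eps^{p/2},
\]
using the uniform upper bound on $g^{-1}$. For the drift part, Jensen's inequality (in time) gives
\[
    \mathbb{E}\!\left[\,\left\|\int_0^\eps b(Y_s)\,ds\right\|^p\right] \leq \eps^{p-1}\int_0^\eps \mathbb{E}\bigl[\|b(Y_s)\|^p\bigr]\,ds = \eps^p\, \mathbb{E}_\mu\bigl[\|b\|^p\bigr],
\]
where the last equality uses that the law of $Y_s$ equals $\mu$ for every $s$ (stationarity of the initial distribution). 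By the pointwise bound on $b$ and the hypothesis $\mathbb{E}_\mu[\|\nabla_g U_\mu\|_g^p] < +\infty$, the quantity $\mathbb{E}_\mu[\|b\|^p]$ is finite, independent of $\eps$.

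Combining the two bounds yields
\[
    \mathbb{E}\bigl[\|Y_\eps - Y_0\|^p\bigr] \leq C_1\,\eps^{p/2} + C_2\,\eps^p,
\]
so that $\eps^{-p/2}\mathbb{E}[\|Y_\eps - Y_0\|^p] \leq C_1 + C_2\,\eps^{p/2}$, which is bounded as $\eps \downarrow 0$; the bilipschitz comparison between $d$ and the Euclidean norm then gives the claim. The only subtle point is that the drift is not a priori bounded, but stationarity together with the integrability assumption $\mathbb{E}_\mu[\|\nabla_g U_\mu\|_g^p] < +\infty$ neutralises this, and the curvature piece $g^{ij}\Gamma^k_{ij}$ is bounded under (H2) precisely because the first derivatives of the metric are bounded. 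I expect this latter cancellation (i.e., that the ``Christoffel part'' of the drift automatically behaves like a bounded function without any further hypothesis beyond those in (H2)) to be the only nontrivial verification in an otherwise routine argument.
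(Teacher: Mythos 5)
Your proof is correct and follows essentially the same route as the paper: work in the global chart given by (H2), use the two-sided bound on $g$ to compare $d$ with the Euclidean norm, split the increment into the drift integral and the stochastic integral, bound the latter by BDG and the uniform bound on $g^{-1}$ to get $O(\eps^{p/2})$, and bound the former by Jensen's inequality together with stationarity and the integrability hypothesis on $\norm{\nabla_g U_\mu}_g$ (plus boundedness of the Christoffel piece under (H2)) to get $O(\eps^p)$. This matches the paper's argument step for step.
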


\begin{proof}
    Fix a global chart under which Assumption \ref{assumption:manifold} (H2) holds and identify $M$ with $\mathbb{R}^{d}$. Then the manifold distance, denoted $d$, is equivalent to the Euclidean distance, which we denote $\norm{\cdot}$. Recall that in this global chart, the SDE \eqref{eq:intro-target-process} can be written as \eqref{eq:manifold-diff-global-chart}. For convenience, set $\beta(x) = -\frac{1}{2}g^{ik}(x) \frac{\partial}{\partial x^k}U(x) - \frac{1}{2}g^{ij}(x)\Gamma_{ij}^{k}(x)$, and observe that
    \begin{align}\label{eq:lazy-bdd-diff-lp}
        d^{p}(X_0,X_{\eps}) &\leq C_1 \norm{X_{\eps}-X_{0}}^{p} \leq C_2\left(\norm{\int_0^{\eps}\beta(X_t)dt}^{p}+\norm{\int_0^{\eps} g^{-1/2}(X_t)dB_t}^{p}\right)
    \end{align}
    Let $M_t = \int_0^{t} g^{-1/2}(X_t)dB_t$, which is a martingale. By the BDG inequality \cite[Chapter IV, Theorem 4.1]{revuz2004continuous} and bounds on $g$, there are constants $C_3,C_4 > 0$ such that $\Exp{}\norm{M_\eps}^{p} \leq C_3 \Exp{}\left[\langle M_{\eps}\rangle^{p/2}\right] \leq C_4 \eps^{p/2}$. For the deterministic term, it follows from Jensen's inequality, the bounds on $g^{-1}$ and the $\Gamma_{ij}^{k}$, and stationarity of $(X_t,t \geq 0)$ that for some constant $C_5 > 0$
    \begin{align*}
        \Exp{}\norm{\int_0^{\eps} \beta(X_t)dt}^{p} \leq C_5\eps^{p-1}\int_0^{\eps}\left(\Exp{}\left[\norm{\nabla_g U_{\mu}(X_t)}_g^{p}\right]+1\right)dt = C_5(\Exp{\mu}\left[\norm{\nabla_g U_{\mu}}_g^{p}\right]+1)\eps^p.
    \end{align*}
    Applying expectation to \eqref{eq:lazy-bdd-diff-lp} with the above bounds establishes the desired decay rate. 
\end{proof}

\subsection{Some Riemannian Calculations}
We present some useful geometric calculations. 

\begin{proposition}\label{prop:moment-bdd-brownian-bridge}
    Let $(M,g)$ satisfy Assumption \ref{assumption:manifold} (H2). Let $(W_{x,y}^{\eps}, x,y \in M, \eps > 0)$ denote the laws on $\cP(C([0,\eps],M))$ of the Brownian bridges on $M$, then there is an $\eps_0 < 1$ such that for each integer $N \geq 1$ there exists $C > 0$ such that for all $x,y \in M$ and $0 < t < \eps < \eps_0 (< 1)$
    \begin{align}
        \Exp{W_{x,y}^{\eps}}[d^{2N}(X_t,x)] \leq C\left(d^{2N}(x,y)+\eps\right).
    \end{align}
    This same inequality holds for bridges over $[0,1]$ of $\eps$-Brownian motion with $t \in [0,1]$.
\end{proposition}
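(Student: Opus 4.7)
My approach is to reduce the proof to a Euclidean estimate via the global chart from (H2), then extract the moment bound from the heat-kernel disintegration of the bridge.

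Under (H2), identify $M$ with $\R^d$ through the fixed chart. The bound $\alpha \Id \leq g(x) \leq \beta \Id$ gives $d(x,y)\asymp\norm{x-y}$ and $\vol\asymp\Leb$, so it suffices to bound the Euclidean moment $\Exp{W_{x,y}^\eps}[\norm{X_t-x}^{2N}]$. In this chart, manifold Brownian motion is a uniformly elliptic diffusion with smooth bounded coefficients, so by Aronson/Li--Yau it admits two-sided Gaussian heat-kernel bounds
\begin{align*}
\frac{c_1}{\eps^{d/2}}e^{-C_1\norm{x-y}^2/\eps}\leq p_\eps(x,y)\leq \frac{C_2}{\eps^{d/2}}e^{-c_2\norm{x-y}^2/\eps}, \qquad \eps\in(0,1).
\end{align*}

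From the disintegration identity
\begin{align*}
\Exp{W_{x,y}^\eps}[\norm{X_t-x}^{2N}]=\frac{1}{p_\eps(x,y)}\int_{\R^d}\norm{z-x}^{2N}\,p_t(x,z)\,p_{\eps-t}(z,y)\,\vol(dz),
\end{align*}
I would substitute the upper and lower Gaussian bounds and use the Euclidean identity
\begin{align*}
\frac{\norm{x-z}^2}{t}+\frac{\norm{z-y}^2}{\eps-t}=\frac{\eps}{t(\eps-t)}\norm{z-z_0}^2+\frac{\norm{x-y}^2}{\eps},\qquad z_0:=\frac{(\eps-t)x+ty}{\eps},
\end{align*}
reducing the integral to a Gaussian moment centered at $z_0$. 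Since $\norm{z_0-x}=(t/\eps)\norm{y-x}\leq\norm{y-x}$, combining $\norm{z-x}\leq\norm{z-z_0}+\norm{z_0-x}$ with the standard Gaussian moment formula yields
\begin{align*}
\Exp{W_{x,y}^\eps}[\norm{X_t-x}^{2N}]\ \leq\ C_N\left(\norm{x-y}^{2N}+\eps^N\right)\cdot\exp\!\left((C_1-c_2)\norm{x-y}^2/\eps\right),
\end{align*}
and $\eps^N\leq\eps$ for $N\geq 1$, $\eps<1$.

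The main obstacle is the factor $\exp((C_1-c_2)\norm{x-y}^2/\eps)$, which reflects the mismatch between the upper and lower Gaussian exponential constants. I would dispose of it by a dichotomy. If $\norm{x-y}\leq K\sqrt{\eps}$ for a fixed constant $K=K(C_1,c_2,N)$, the bad factor is bounded by $e^{CK^2}$ and the estimate follows immediately. If $\norm{x-y}>K\sqrt\eps$, I would replace the crude Gaussian bounds by Varadhan's sharp small-time asymptotic of Proposition~\ref{prop:heat-kernel-asymp}, whose matching exponents $1/(2\eps)$ produce
\begin{align*}
-\frac{d^2(x,z)}{2t}-\frac{d^2(z,y)}{2(\eps-t)}+\frac{d^2(x,y)}{2\eps}\leq 0
\end{align*}
by the triangle/Cauchy--Schwarz inequality, eliminating the bad factor and yielding a clean bound of order $C\norm{x-y}^{2N}$. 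A cutoff argument handles the cut-locus exceptional set, where the sharp asymptotic is unavailable, via the crude Aronson bounds on an event of exponentially small bridge probability.

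Finally, the second assertion for $\eps$-Brownian motion bridges on $[0,1]$ is immediate from the diffusive rescaling $\widetilde X_s:=X_{\eps s}$: the rescaled process is exactly such a bridge and has identical moments to the $[0,\eps]$-bridge of standard manifold Brownian motion, so the bound transfers.
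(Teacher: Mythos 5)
Your approach is genuinely different from the paper's. The paper identifies $M$ with $\mathbb{R}^d$ via the (H2) chart, applies It\^{o}'s formula to $\eta(z) = \norm{z-y}^{2N}$ along the Brownian bridge SDE, controls the bridge drift $\nabla_g \log p_{\eps-s}(\cdot,y)$ via the \emph{uniform} gradient-log-heat-kernel estimate of Engoulatov (which holds over all of $M$ under (H2)), and closes with Gr\"onwall. Your route goes through the disintegration of the bridge marginal as $p_t(x,z)p_{\eps-t}(z,y)/p_\eps(x,y)$, Aronson two-sided Gaussian bounds, and a Gaussian moment computation. The disintegration/Aronson calculation and the dichotomy are set up correctly (the Gaussian center $z_0$ identity and the Cauchy--Schwarz inequality showing the on-diagonal exponent is nonpositive both check out).

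The gap is in the regime $\norm{x-y}>K\sqrt{\eps}$. To kill the factor $\exp\bigl((C_1-c_2)\norm{x-y}^2/\eps\bigr)$ you invoke the sharp Varadhan-type expansion of Proposition \ref{prop:heat-kernel-asymp}, but that proposition only provides uniform control of $c_0$ and the remainder $R$ on \emph{compact} subsets of $(M\times M)\setminus\cC$. Under Assumption \ref{assumption:manifold} (H2) the manifold is noncompact, and the bound in the proposition must hold for \emph{all} $x,y\in M$ with a single constant $C$. Nothing in the paper (or in Proposition \ref{prop:heat-kernel-asymp}) supplies a lower bound on $c_0(x,y)$ or an upper bound on $\abs{R(\eps,x,y)}$ uniform over unbounded pairs $(x,y)$; in fact Remark \ref{remark:justification-conjecture} singles out precisely the absence of a globally uniform heat-kernel remainder as the obstruction preventing a quantitative rate in Theorem \ref{thm:mld-quad-sb-comparison}. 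Your cut-locus cutoff is also incomplete as stated: the $z$-integral ranges over all of $M$, and one needs to control the contributions from $z$ near $C_x$ or $C_y$ (and potentially $(x,y)$ itself near the cut locus), which is a different and thornier issue than a small-probability tail event. The paper's It\^{o}/Gr\"onwall proof avoids all of this by relying on the single uniform gradient estimate \eqref{eq:grad-log-heat-kernel}, which is available on manifolds of bounded geometry and needs no matching on-diagonal exponential constants. Either cite a globally uniform on/off-diagonal expansion valid under (H2) (e.g.\ a bounded-geometry version of the Minakshisundaram--Pleijel/Cheeger--Gromov--Taylor asymptotics with explicit remainder), or switch to the gradient-estimate route.
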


\begin{proof}
    We fix a global chart as stipulated in the (H2) and identify $M$ with $\mathbb{R}^{d}$. The following argument mirrors \cite[Proposition 5.5.4]{hsu-stoch-analysis-manifold}, which establishes this bound in the case of $M$ compact on the basis of the SDE for the Brownian bridge \cite[Theorem 5.5.4]{hsu-stoch-analysis-manifold} and the following estimate of the gradient of the log of the fundamental solution to $\partial_t - \frac{1}{2}\Delta_g = 0$ for some $C > 0$ and all $\eps < 1$
\begin{align}\label{eq:grad-log-heat-kernel}
    \norm{\nabla_g \log p_{\eps}(x,y)}_{g} \leq C\left(\frac{1}{\eps}d(x,y)+\frac{1}{\sqrt{\eps}}\right).
\end{align}
On $(M,g)$ satisfying (H2), the same SDE for the Brownian bridge is established in \cite{engoulatov-grad-log-heat-06}. Similarly, as the volumes of all balls on $(M,g)$ are comparable to Euclidean balls with universal constants, \cite[Theorem 1]{engoulatov-grad-log-heat-06} establishes \eqref{eq:grad-log-heat-kernel} with a constant $C$ holding for all $x,y \in M$ and $\eps$ small enough, say less than some $\eps_0 \in (0,1)$. 

Fix some $x,y \in M (= \mathbb{R}^{d})$. The following is the main deviation from the argument in \cite[Proposition 5.5.4]{hsu-stoch-analysis-manifold}. Define the function $\eta: M \to [0,+\infty)$ by $\eta(z) = \norm{z-y}^{2N}$, where $\norm{\cdot}$ denotes the Euclidean norm on $\mathbb{R}^{d}$. Observe that $\eta \in C^{\infty}(M)$ and that, with $\alpha,\beta > 0$ as in assumption, $\alpha^{N} \eta(z) \leq d^{2N}(z,y) \leq \beta^{N} \eta(z)$. There then exists numerical constants $K_1, K_2 > 0$ such that for all $i,j \in\{1,\dots,d\}$
\begin{align}\label{eq:poly-bds}
    \abs{\frac{\partial}{\partial z^i} \eta(z)} \leq K_1 \eta(z)^{(2N-1)/2N} \text{ and } \abs{\frac{\partial^2}{\partial z^i \partial z^{j}}\eta(z)} \leq K_2 \eta(z)^{(2N-2)/2N}
\end{align}
Thus, it follows that
\begin{align}\label{eq:grad-est-euc-rad}
    \nabla_g \eta(z) &= g^{ij}(z)\frac{\partial}{\partial z^j} \eta(z) \implies \norm{\nabla_g \eta(z)}_{g} \leq \alpha^{-1}K_{1}\eta(z)^{(2N-1)/2N}. 
\end{align}
Let $(X_{t}, t \in [0,\eps])$ be the coordinate process. Applying $\Ito$'s formula to $\eta$ on the Brownian bridge $W_{x,y}^{\eps}$ as in \cite[Proposition 5.5.4]{hsu-stoch-analysis-manifold} gives that
\begin{align}
    t \in [0,\eps) \mapsto \eta(X_t) - \eta(x) -\frac{1}{2}\int_0^{t}\Delta_g \eta(X_s)ds - \int_0^{t}\langle \nabla_g \eta(X_s),\nabla_g \log p_{\eps-s}(X_s,y)\rangle_g ds
\end{align}
is a continuous local martinagle. Define a sequence of stopping times $S_R := \inf\{s > 0: \eta(X_s) > R \}$, then for each $R \geq 1$ it holds that
\begin{align}\label{eq:stopped-mgale-equality}
    \Exp{}[\eta(X_{t \wedge S_R})] = \eta(x)+ \frac{1}{2}\int_0^{t \wedge S_R} \Exp{}[\Delta_g \eta(X_s)]ds + \int_0^{t \wedge S_R} \Exp{}[\langle \nabla_g \eta(X_s),\nabla_g \log p_{\eps-s}(X_s,y)\rangle_g] ds. 
\end{align}
Recall the coordinate expression for the Laplace-Beltrami operator in \eqref{eq:laplace-beltrami-coords}. As $g$ and its first derivatives are bounded, by \eqref{eq:poly-bds} there is a constant $K_3 > 0$ such that
\begin{align*}
    \abs{\Delta_g f(z)} &= \abs{2g^{ij}(z)\frac{\partial^2}{\partial z^i \partial z^{j}}\eta(z)-g^{ij}(z)\Gamma_{ij}^{k}(z)\frac{\partial}{\partial z^{k}}\eta(z)} \\
    &\leq K_{3}\left(1+\eta(z)^{(2N-2)/2N}+\eta(z)^{(2N-1)/2N}\right) \leq K_{3}(3+2\eta(z)).
\end{align*}
By applying Cauchy-Schwarz with \eqref{eq:grad-log-heat-kernel} and \eqref{eq:grad-est-euc-rad}, there is some $K_4$ such that whenever $\eps < \eps_0$, it holds for all $z,y \in M$ and $0 < s \leq \eps/2$ that
\begin{align*}
    \abs{\langle \nabla_g \eta(z),\nabla_g \log p_{\eps-s}(z,y)\rangle_g} &\leq \norm{\nabla_g \eta(z)}_g\norm{\nabla_g \log p_{\eps-s}(z,y)}_g \\
    &\leq \alpha^{-1}K_1 \eta(z)^{(2N-1)/2N} \cdot C\left(\frac{\beta}{\eps-s}\eta(z)^{1/2N}+\frac{1}{\sqrt{\eps-s}}\right)\\
    &\leq K_4 \left(\frac{1}{\eps}\eta(z)+1\right).
\end{align*}
Now, we insist that $t \leq \eps/2$. These two previous bounds give from \eqref{eq:stopped-mgale-equality} that
\begin{align*}
    \Exp{}[\eta(X_{t \wedge S_R})] &\leq \eta(x) + \frac{1}{2}\int_0^{t\wedge S_R} K_3(3+2\Exp{}[\eta(X_s)])ds + \int_0^{t \wedge S_R} K_4\left(\frac{1}{\eps}\Exp{}[\eta(X_s)]+1\right) ds \\
    &\leq \eta(x)+K_{5}t + \frac{K_6}{\eps} \int_0^{t} \Exp{}[\eta(X_s)]ds,
\end{align*}
for positive constants $K_5, K_6 > 0$. Observe that this upper bound holds for all $R \geq 1$. As $S_{R} \to +\infty$ a.s.\ as $R \to +\infty$ from path continuity, it holds from Fatou's lemma that
\begin{align*}
    \Exp{}[\eta(X_{t})] \leq \eta(x) + K_{5}t + \frac{K_6}{\eps}\int_0^{t} \Exp{}[\eta(X_s)]ds.
\end{align*}
By Gr\"{o}nwall's Lemma, there exists $K_{6}$, which is independent of $\eps$, such that $\Exp{}[\eta(X_{t})] \leq K_{6}(\eta(x)+t)$. By equivalence of $d^2$ and $\eta$, for all $t \in (0,\eps/2)$ there is $K_{7} > 0$ such that
\begin{align*}
    \Exp{W_{x,y}^{\eps}}[d^{2N}(X_t,y)] \leq K_{7}(d^{2N}(x,y)+t).
\end{align*}
In the case that $t > \eps/2$, repeat the above argument with $W_{y,x}^{\eps}$, using that this is the measure obtained by reversing in time the paths of $W_{x,y}^{\eps}$. This establishes the desired inequality. 
\end{proof}

\begin{proposition}[Sufficient Condition for \eqref{eq:cu-integrability} under Assumption \ref{assumption:manifold} (H2)]\label{prop:h2-int-argument}
    Let $(M,g)$ satisfy Assumption \ref{assumption:manifold} (H2), and let $\diffref = \vol$ (so that $U_{\diffref}, \cU_{\diffref} = 0$) and $\mu \in \cP_2(M)$ be such that Assumption \ref{assumption:diffusions} is satisfied. Identify $M$ with $\mathbb{R}^{d}$. Suppose that $\mu$ has subexponential tails as a measure on $\mathbb{R}^{d}$ \cite[Definition 2.7.5]{vershynin-hdp} and there exists $C > 0$ and $N \geq 1$ such that
    \begin{align}\label{eq:polygrowth-cu}
        \norm{\nabla_g \cU_{\mu}(x)}_g^{2} \leq C(1+\norm{x}^{N}),
    \end{align}
    where $\norm{\cdot}$ denotes the Euclidean norm on $\mathbb{R}^{d}$. Then \eqref{eq:cu-integrability} and \eqref{eq:o-eps-2-limit} hold.
\end{proposition}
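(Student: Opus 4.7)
The plan is to establish $\limsup_{\eps \downarrow 0} C_\eps < +\infty$ via the mixture representation of the entropic interpolation as an average of manifold Brownian bridges, combining the polynomial growth hypothesis on $\norm{\nabla_g \cU_\mu}_g^2$ with moment control inherited from the subexponential tails of $\mu$. Once this is in hand, \eqref{eq:o-eps-2-limit} is immediate from Theorem \ref{thm:sym-rel-ent} together with Proposition \ref{prop:the-one-about-fi}.

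First I would use that $\cU_{\diffref} \equiv 0$ under $\diffref = \vol$ and invoke the mixture decomposition \eqref{eq:stat-dynam-bridge-mix} to write
\[
C_\eps = \int_0^1 \int_{M \times M} \Exp{R^\eps_{xy}}\!\left[\norm{\nabla_g \cU_\mu(\omega_t)}_g^2\right] \pi^\eps(dxdy)\, dt,
\]
where, since $\diffref = \vol$, the bridges $R^\eps_{xy}$ are exactly the manifold Brownian bridges on $[0,1]$ associated to the $\eps$-rescaled Brownian motion. The polynomial growth hypothesis \eqref{eq:polygrowth-cu} combined with the equivalence of the Riemannian and Euclidean distances on the global chart (guaranteed by $\alpha \Id \leq g \leq \beta \Id$ in Assumption \ref{assumption:manifold} (H2)) yields, for all $x, z \in M$,
\[
\norm{\nabla_g \cU_\mu(z)}_g^2 \leq K_1\left(1 + d(z,x)^N + \norm{x}^N\right),
\]
where $K_1$ depends only on $\alpha, \beta, C, N$.

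The heart of the argument is then Proposition \ref{prop:moment-bdd-brownian-bridge} applied to $\eps$-Brownian bridges on $[0,1]$. Choosing an even integer $2N' \geq N$ and combining with Jensen gives, uniformly in $t \in [0,1]$ and $x,y \in M$ for all small enough $\eps$,
\[
\Exp{R^\eps_{xy}}\!\left[d(\omega_t,x)^N\right] \leq K_2\left(d(x,y)^N + 1\right).
\]
Plugging this in and using the equivalence of metrics once more to bound $d(x,y)^N \leq K_3(\norm{x}^N + \norm{y}^N)$ reduces matters to controlling $\int_{M \times M}(\norm{x}^N + \norm{y}^N)\pi^\eps(dxdy) = 2\Exp{\mu}\!\left[\norm{x}^N\right]$, which is finite because subexponential tails imply finite moments of every order. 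Altogether, $\limsup_{\eps \downarrow 0} C_\eps \leq K_4\bigl(1 + \Exp{\mu}[\norm{x}^N]\bigr) < +\infty$, establishing \eqref{eq:cu-integrability}.

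For \eqref{eq:o-eps-2-limit} I would then invoke Theorem \ref{thm:sym-rel-ent}: the uniform bound on $C_\eps$ is the content of the previous paragraph, and the required convergence $\int_0^1 I_g(\mu_t^\eps | \vol)\, dt \to I_g(\mu | \vol)$ is supplied by Proposition \ref{prop:the-one-about-fi} under (H2), since $U_{\diffref} \equiv 0$ is trivially bounded. The main obstacle I anticipate is the bridge-moment step in the noncompact regime: the pointwise bridge bound must be integrable against $\pi^\eps$, whose only controlled higher moments come through its marginal $\mu$, so the reduction of manifold growth to Euclidean growth via $g \leq \beta \Id$ is doing essential work. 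Fortunately this is exactly the regime handled by Proposition \ref{prop:moment-bdd-brownian-bridge}.
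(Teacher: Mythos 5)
Your proposal is correct and follows essentially the same approach as the paper: both expand the entropic interpolation via the bridge mixture \eqref{eq:stat-dynam-bridge-mix}, apply Proposition \ref{prop:moment-bdd-brownian-bridge} for bridge-moment control, feed in the polynomial-growth hypothesis together with moments coming from subexponentiality, and finish with Proposition \ref{prop:the-one-about-fi}. The only cosmetic difference is order of operations -- the paper first establishes a uniform-in-$t$ moment bound $\sup_{t\in[0,1]}\int d^{2M}(u,x_0)\,\mu_t^\eps(du)\leq C_2$ and then applies the polynomial growth of $\norm{\nabla_g\cU_\mu}_g^2$, whereas you expand the growth bound directly inside the double integral; the two manipulations are interchangeable.
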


\begin{proof}
    As $\mu$ is subexponential, it has finite moments of all order. Moments can be computed with respect to either $\norm{\cdot}$ and $d$ as the two are equivalent under (H2). Let $\pi^{\eps}$ be the $\eps$-static $\Schro$ bridge from $\mu$ to itself computed with Wiener reference measure. 

    Let $\eps > 0$ be small enough that Proposition \ref{prop:moment-bdd-brownian-bridge} holds and fix an integer $M \geq 1$. Let $x_0 \in M$, then for all $t \in (0,1)$
    \begin{align*}
        \int_{M} d^{2M}(u,x_0) \mu_t^{\eps}(du) &= \int_{M \times M} \left(\Exp{W^{\eps}_{x,z}}d^{2M}(\omega_t,x_0)\right)\pi^{\eps}(dxdz) \\
        &\leq 2^{2M}\left(\int_{M \times M} \Exp{W^{\eps}_{x,z}}[d^{2M}(\omega_t,x)] \pi^{\eps}(dxdz)\right)+2^{2M}\int_{M} d^{2M}(x,x_0) \mu(dx). 
    \end{align*}
    By Proposition \ref{prop:moment-bdd-brownian-bridge}, there is $C_1 > 0$ such that
    \begin{align*}
        \int_{M \times M} \Exp{W^{\eps}_{x,z}}[d^{2M}(\omega_t,x)] \pi^{\eps}(dxdz) &\leq \int_{M \times M} C_1(d^{2M}(x,z)+\eps) \pi^{\eps}(dxdz) \\
        &\leq 2^{2M+1}C_1 \int_{M} d^{2M}(x,x_0) \mu(dx) + C_1\eps.  
    \end{align*}
    Altogether then, there is a constant $C_2 > 0$ such that, for all $\eps >0$ small enough,
    \begin{align}\label{eq:uniform-moment-bdd-entropic-int}
        \sup\limits_{t \in [0,1]} \int_{M}  d^{2M}(u,x_0)\mu_t^{\eps}(du) \leq C_2. 
    \end{align}
    Hence, for small enough $\eps$ each measure along the entropic interpolation has finite moments of all order. By \eqref{eq:uniform-moment-bdd-entropic-int}, there is a constant $C_3$ such that for all $\eps > 0$ small enough, $\norm{\nabla_g \cU_{\mu}}_{L^2(\mu_t^{\eps})}^2 \leq C_3$ for all $t \in [0,1]$. Hence, \eqref{eq:cu-integrability} holds and $\limsup\limits_{\eps \downarrow 0} \int_0^{1} \norm{\nabla_g \cU_{\mu}}_{L^2(\mu_t^{\eps})}^2 dt < +\infty$. As $\diffref = \vol$, Proposition \ref{prop:the-one-about-fi} applies and \eqref{eq:o-eps-2-limit} holds.  
\end{proof}

\begin{proposition}\label{prop:hess-dist-symm-breg}
    Consider the Hessian manifold satisfying Assumption \ref{assumption:hessian-manifold}, and identify it with $\mathbb{R}^{d}$ via its primal coordinates. Let $D_{\varphi}[\cdot|\cdot]$ denote the Bregman divergence with respect to $\varphi$. There exists $K > 0$ depending on the global bound of the third and fourth derivatives of $\varphi$ such that for all $x,z \in \mathbb{R}^{d}$, with $\norm{\cdot}$ denoting the Euclidean norm on $\mathbb{R}^{d}$,
    \begin{align*}
        \abs{d^{2}(x,z) - \left(D_{\varphi}[z|x]+D_{\varphi}[x|z]\right)} \leq K\norm{z-x}^4.
    \end{align*} 
\end{proposition}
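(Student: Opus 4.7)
The plan is to show the two quantities agree up to and including cubic order in $u := z-x$, with the cubic cancellation coming from a Hessian-manifold identity, and then control the quartic remainder uniformly in $x$ using Assumption \ref{assumption:hessian-manifold}; a crude estimate handles $\|u\|$ not small. Both quantities vanish to second order at $u=0$, so the cancellation is nontrivial.

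First I would rewrite $D_{\varphi}[z|x]+D_{\varphi}[x|z]=(\nabla\varphi(z)-\nabla\varphi(x))\cdot u$ and Taylor-expand $\nabla\varphi(z)$ around $x$ in coordinates using the Lagrange remainder; contracting with $u$ yields
\[
D_{\varphi}[z|x]+D_{\varphi}[x|z]=g_{ij}(x)u^{i}u^{j}+\tfrac{1}{2}\partial^{3}_{ijk}\varphi(x)u^{i}u^{j}u^{k}+R_{B}(x,u),
\]
with $|R_{B}(x,u)|\le C_{1}(\sup_{\mathbb{R}^d}|\partial^{4}\varphi|)\|u\|^{4}$ uniformly in $x$ by Assumption \ref{assumption:hessian-manifold}(2). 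For the distance, I would write $z=\exp_{x}(v)$ so $d^{2}(x,z)=g_{ij}(x)v^{i}v^{j}$ exactly, and expand the geodesic ODE $\ddot y^{k}+\Gamma^{k}_{ij}(y)\dot y^{i}\dot y^{j}=0$ to get $u^{k}=v^{k}-\tfrac{1}{2}\Gamma^{k}_{ij}(x)v^{i}v^{j}+O(|v|^{3})$, which inverts to $v^{k}=u^{k}+\tfrac{1}{2}\Gamma^{k}_{ij}(x)u^{i}u^{j}+O(\|u\|^{3})$. Substituting gives
\[
d^{2}(x,z)=g_{ij}(x)u^{i}u^{j}+g_{ij}(x)\Gamma^{j}_{rs}(x)u^{i}u^{r}u^{s}+R_{d}(x,u).
\]
The key Hessian identity $g_{ij}\Gamma^{j}_{rs}=\tfrac{1}{2}\partial^{3}_{irs}\varphi$, which follows from $\Gamma^{k}_{ij}=\tfrac{1}{2}g^{kl}\partial^{3}_{ijl}\varphi$ (Koszul formula applied to $g_{ab}=\partial^{2}_{ab}\varphi$, using total symmetry of $\partial^{3}\varphi$), forces the cubic coefficients of the two expansions to coincide exactly.

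To make $R_{d}(x,u)$ uniformly quartic, I would apply Taylor's theorem with integral remainder to $\phi(t):=d^{2}(x,x+tu)$ on $[0,1]$: since $\phi(0)=\phi'(0)=0$ and the coefficients of $t^{2}$ and $t^{3}$ in $\phi$ match the quadratic and cubic terms above, the remainder $R_{d}(x,u)=\phi(1)-[\text{degree }\le 3\text{ polynomial}]$ is controlled by $\sup_{t\in[0,1]}|\phi^{(4)}(t)|$. The latter depends only on $g$, $g^{-1}$ and derivatives of $g$ up to order two, hence on $\partial^{2}\varphi,\partial^{3}\varphi,\partial^{4}\varphi$, all globally bounded; this gives $|R_{d}(x,u)|\le C_{2}\|u\|^{4}$ for all $x$ and all $\|u\|\le R$, where $R>0$ is a fixed small number below the uniform positive injectivity radius (whose existence follows from the global bounds on $g$ and its derivatives combined with the lower-bounded Ricci curvature). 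For $\|u\|>R$, one uses the crude bounds $D_{\varphi}[z|x]+D_{\varphi}[x|z]=\int_{0}^{1}u^{T}g(x+tu)u\,dt\le\beta\|u\|^{2}$ and $d^{2}(x,z)\le\beta\|u\|^{2}$ (from the Euclidean straight line having $g$-length at most $\sqrt{\beta}\|u\|$), so their difference is at most $2\beta\|u\|^{2}\le(2\beta/R^{2})\|u\|^{4}$. Taking $K=\max\{C_{1}\sup|\partial^{4}\varphi|+C_{2},\,2\beta/R^{2}\}$ yields the claim.

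The main obstacle I anticipate is making the quartic bound on $\phi^{(4)}$ truly uniform in $x$: one must check that the relevant derivatives of $d^{2}(x,\cdot)$ along straight Euclidean lines are controlled by universal constants on the whole of $\mathbb{R}^{d}$. This reduces to writing $\phi^{(4)}(t)$ explicitly in terms of the metric and Christoffel symbols evaluated on the segment $[x,x+u]$, after which Assumption \ref{assumption:hessian-manifold}(1)–(2) supply the needed uniform bounds. The remainder of the argument is routine Taylor calculus once the cubic cancellation via the Hessian identity is in place.
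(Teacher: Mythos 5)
Your overall plan — Taylor-expand both quantities in $u=z-x$, observe that the quadratic and cubic terms coincide via the Hessian identity $g_{ij}\Gamma^j_{rs}=\tfrac12\partial^3_{irs}\varphi$, and bound the quartic remainder uniformly, with a crude $O(\|u\|^2)$ estimate taking over for $\|u\|>R$ — is exactly the route the paper takes. The cubic cancellation, the Bregman-side remainder bound, and the large-$\|u\|$ step are all correct, and you add useful care (the $\|u\|>R$ case) that the paper leaves implicit.

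The step that would fail as written is the claim that $\phi^{(4)}(t)$, where $\phi(t)=d^2(x,x+tu)$, ``depends only on $g$, $g^{-1}$ and derivatives of $g$ up to order two.'' Writing $\phi(t)=g(x)\bigl(v(t),v(t)\bigr)$ with $v(t)=\log_x(x+tu)$, $\phi^{(4)}$ contains $v^{(4)}(t)$; implicit differentiation of $\exp_x(v(t))=x+tu$ shows $v^{(4)}$ involves $D^4\exp_x$ evaluated at $v(t)\neq 0$, which solves a fourth-order variational equation whose source term already contains $\partial^2\Gamma$ at $v=0$ and higher-order derivatives of $\Gamma$ for $v\neq 0$. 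Since $\Gamma\sim g^{-1}\partial g\sim\partial^3\varphi$, your $\phi^{(4)}$ therefore depends on $\partial^5\varphi$ and beyond, which Assumption \ref{assumption:hessian-manifold} does not control, so the Taylor-with-integral-remainder step does not produce a constant bounded in terms of $\partial^3\varphi,\partial^4\varphi$ alone. You anticipate this worry yourself, which is the right instinct. A way to close the gap without higher derivatives is to compare the two energies directly: $d^2(x,z)=\int_0^1 g(\gamma)(\dot\gamma,\dot\gamma)\,dt$ for the minimizing geodesic $\gamma$ and $D_\varphi[z|x]+D_\varphi[x|z]=\int_0^1 g(x+tu)(u,u)\,dt$. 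Write $\gamma=\ell+\delta$ with $\ell(t)=x+tu$, $\delta(0)=\delta(1)=0$, and show $\|\delta\|_{C^1}\le C\|\Gamma\|_\infty\|u\|^2$ for $\|u\|\le (4\|\Gamma\|_\infty)^{-1}$ via a Green's-function fixed point whose contraction constant is controlled by $\|\partial\Gamma\|_\infty$ (i.e.\ $\partial^4\varphi$); after integrating by parts the cross term $\int_0^1 g(\ell)(u,\dot\delta)\,dt$, every term in $d^2-D$ is manifestly $O(\|u\|^4)$ with constant depending only on $\alpha,\beta,\sup|\partial^3\varphi|,\sup|\partial^4\varphi|$, and the cubic cancellation falls out automatically.
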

\begin{proof}
This reduces to a Taylor series comparison between the two quantities at hand. Recall the identity \eqref{eq:symm-bregman-div-varphi}. 
Using the Taylor expansion for $\nabla \varphi(\cdot)$ about $x$, it holds that
\begin{align}\label{eq:symm-breg-expan}
    D_{\varphi}[z|x]+D_{\varphi}[x|z] &= \nabla^2 \varphi(x) (z-x)^2 + \frac{1}{2}\nabla^{3}\varphi(x)(z-x)^{3} +O(\|z-x\|^4). 
\end{align}
For fixed $x$, recall that $\vol$-a.e.\ $z \notin C_x$. For such $z$, it holds that $d^2(z,x) = \norm{\log_x(z)}^2_{g(x)}$. We now work in primal coordinates. In this case, $g(x) = \nabla^2 \varphi(x)$ and by \cite[Section 3]{brewin-expansions-09} there exists a constant $K > 0$ depending on the third and fourth derivatives of $\varphi$ such that
\begin{align}
    \norm{\log_x(z) - \left((z-x) + \frac{1}{2}\Gamma_{ij}^{k}(x)(z-x)^i(z-x)^j\right) } \leq  K\norm{z-x}^{3}. 
\end{align}
As $\varphi$ has globally bounded third and fourth derivatives, this constant $K$ can be chosen independently of $x$ and $z \notin C_x$. Hence, for $\vol$-a.e.\ $z \in \mathbb{R}^{d}$
\begin{align}
    \abs{\norm{\log_x(z)}^2_{g(x)} - \left( \nabla^2 \varphi(x)(z-x)^2 + \Gamma_{ij}^{k}(x)\partial^2_{k\ell}\varphi(x)(z-x)^i(z-x)^j(z-x)^\ell\right)} \leq K\|z-x\|^4. 
\end{align}
The Christoffel symbols in primal coordinates are given in \eqref{eq:cris-sym-primal}. Thus, we have
\begin{align}
    \abs{\norm{\log_x(z)}^2_{g(x)} - \left(\nabla^2 \varphi(x)(z-x)^2 + \frac{1}{2}\nabla^{3}\varphi(x)(z-x)^{3}\right)} \leq K\norm{z-x}^{4}.
\end{align}
The lemma follows by comparing this expansion with that of \eqref{eq:symm-breg-expan}.   
\end{proof}

\subsection{Entropic Cost Expansion}
The expansion of entropic cost in $\eps$ is well-established from works such that \cite[Theorem 1.6]{conforti21deriv} and \cite[Theorem 1]{chizat20-faster-wass}. However, in the case of fully supported densities on $\mathbb{R}^{d}$, this expansion is not present in the literature. As this setting is precisely the one considered in Section \ref{sec:diff-marg}, we present the following argument. 
\begin{proposition}\label{prop:sb-cost-exp-rd}
    Let $\mu,\nu \in \cP_{2}(\mathbb{R}^{d})$ be such that $ \Ent(\mu), \Ent(\nu)$ are finite and $\int_0^1 I(\rho_t^0|\mathrm{Leb}) dt < +\infty$, where $(\rho_t^{0}, t \in [0,1])$ is the McCann interpolation. Let $P^{\eps}$ denote the $\eps$-dynamic $\Schro$ bridge from $\mu$ to $\nu$ computed with respect to reversible Wiener measure $R^{\eps}$ on $\mathbb{R}^{d}$, then
    \begin{align}\label{eq:sb-cost-exp}
        \eps H(P^{\eps}|R^{\eps}) &= \frac{1}{2}\Was{2}^{2}(\mu,\nu) + \frac{\eps}{2}\left(\Ent (\mu) + \Ent (\nu)\right) + \frac{\eps^2}{8}\int_0^1 I(\rho_t^0|\mathrm{Leb})dt + o(\eps^2).
    \end{align}
\end{proposition}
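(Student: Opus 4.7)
The plan is to start from the Entropic Benamou--Brenier identity (Proposition \ref{prop:entropic-bb}) applied with $\diffref = \mathrm{Leb}$, which reduces the problem to a variational analysis of
\[
A_\eps((\mu_t,v_t)) = \frac12 \int_0^1 \|v_t\|^2_{L^2(\mu_t)} dt + \frac{\eps^2}{8} \int_0^1 I(\mu_t|\mathrm{Leb}) dt
\]
along the $\eps$-entropic interpolation $(\mu_t^\eps, v_t^\eps)$ from $\mu$ to $\nu$. Indeed
\[
\eps H(P^\eps|R^\eps) = \frac{\eps}{2}(\Ent(\mu) + \Ent(\nu)) + A_\eps((\mu_t^\eps, v_t^\eps)),
\]
so it suffices to prove $A_\eps((\mu_t^\eps, v_t^\eps)) = \tfrac{1}{2} W_2^2(\mu,\nu) + \tfrac{\eps^2}{8}\int_0^1 I(\rho_t^0|\mathrm{Leb}) dt + o(\eps^2)$.

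For the upper bound I would use the McCann interpolation $(\rho_t^0, v_t^0)$ as a test curve: its kinetic energy equals $\tfrac{1}{2}W_2^2(\mu,\nu)$, and by hypothesis its integrated Fisher information is finite, so minimality of the entropic interpolation yields
\[
A_\eps((\mu_t^\eps, v_t^\eps)) \leq \frac{1}{2} W_2^2(\mu,\nu) + \frac{\eps^2}{8}\int_0^1 I(\rho_t^0|\mathrm{Leb}) dt.
\]
For the lower bound, the admissibility of $(\mu_t^\eps, v_t^\eps)$ as a curve from $\mu$ to $\nu$ gives $\tfrac{1}{2}\int_0^1 \|v_t^\eps\|^2 d\mu_t^\eps\, dt \geq \tfrac{1}{2}W_2^2(\mu,\nu)$, and hence
\[
A_\eps((\mu_t^\eps, v_t^\eps)) \geq \frac{1}{2}W_2^2(\mu,\nu) + \frac{\eps^2}{8}\int_0^1 I(\mu_t^\eps|\mathrm{Leb})dt.
\]
To close the expansion to $o(\eps^2)$ I need the matching Fisher information inequality $\liminf_{\eps \downarrow 0} \int_0^1 I(\mu_t^\eps|\mathrm{Leb})dt \geq \int_0^1 I(\rho_t^0|\mathrm{Leb})dt$. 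This is the one nontrivial input: I would establish it by (i) showing pointwise $W_2$-convergence $\mu_t^\eps \to \rho_t^0$ for each $t \in [0,1]$ --- this follows from the convergence of the static Schr\"odinger bridge $\pi^\eps$ to the Monge optimal plan $(\mathrm{Id}, \nabla \varphi)_\# \mu$ in $W_2$ (Proposition 3.7 of \cite{leo-sb-to-kp12}, using that $\tfrac12 \|\cdot\|^2$ is coercive) and a second-moment bound for the Brownian bridges in the disintegration \eqref{eq:stat-dynam-bridge-mix}, exactly as in the proof of Proposition \ref{prop:the-one-about-fi}, with second moments of the bridge controlled via Proposition \ref{prop:moment-bdd-brownian-bridge}; and (ii) invoking lower semicontinuity of $I(\cdot|\mathrm{Leb})$ along $W_2$-convergent sequences in level sets of $H(\cdot|\mathrm{Leb})$, which is \cite[Proposition 9.7]{ags-calc-heat-flow}. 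The required entropy bound $\sup_\eps H(\mu_t^\eps|\mathrm{Leb}) < +\infty$ follows by applying the disintegration of relative entropy \cite[Theorem 2.4]{leonard2014some} exactly as in the proof of Proposition \ref{prop:the-one-about-fi}, combined with the a priori upper bound on the action already established. An application of Fatou's lemma then yields the desired liminf inequality, and combining with the upper bound closes the expansion.

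The main obstacle will be subtlety (i): unlike the same-marginal setting handled in Proposition \ref{prop:the-one-about-fi}, here $\mathrm{Leb}$ is an infinite reference measure and $\mu \neq \nu$, so the $W_2$-convergence of the entropic interpolation to the McCann interpolation must be argued directly from the $W_2$-convergence of $\pi^\eps$ to the optimal plan. This requires controlling the second-moment contribution of the Brownian-bridge dispersion, which is $O(\eps)$ uniformly in the endpoints by Proposition \ref{prop:moment-bdd-brownian-bridge}, so it vanishes in the limit and the claim reduces to the known convergence of $\pi^\eps$ pushed forward by the affine interpolation map. Once this convergence is in hand, the remaining ingredients are essentially direct appeals to results already cited or developed in Section \ref{sec:preliminaries}.
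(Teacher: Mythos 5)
Your overall structure — entropic Benamou--Brenier, the two suboptimality inequalities (McCann against the entropic functional, entropic interpolation against the unregularized one), Fisher-information lower semicontinuity plus Fatou — matches the paper's proof. However, there is a genuine gap in step (ii) of your ``nontrivial input.'' You invoke lower semicontinuity of $I(\cdot|\mathrm{Leb})$ along $\Was{2}$-convergent sequences lying in a level set of $H(\cdot|\mathrm{Leb})$, citing \cite[Proposition 9.7]{ags-calc-heat-flow}, and claim the needed uniform bound $\sup_{\eps}H(\mu_t^{\eps}|\mathrm{Leb})<+\infty$ follows from the relative-entropy disintegration exactly as in the proof of Proposition \ref{prop:the-one-about-fi}. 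That argument breaks down here: the disintegration gives $H(\mu_t^{\eps}|\mathrm{Leb})\leq H(P^{\eps}|R^{\eps})$, and in the \emph{same-marginal} case of Proposition \ref{prop:the-one-about-fi} the right-hand side is $O(1)$ because one can compare against the stationary diffusion coupling $\ell_{\eps}\in\Pi(\mu,\mu)$. In the \emph{different-marginal} case $\mu\neq\nu$, one has $H(P^{\eps}|R^{\eps})\sim \tfrac{1}{2\eps}\Was{2}^2(\mu,\nu)$, which diverges, and there is no analogue of $\ell_{\eps}$ to improve it. So the entropy bound you claim to inherit is not available, and the lsc lemma you chose does not apply.

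The paper sidesteps this entirely by using a weaker form of lower semicontinuity: $I(\cdot|\mathrm{Leb})$ is lower semicontinuous with respect to \emph{weak} convergence on $\cP(\mathbb{R}^d)$, with no level-set hypothesis, citing \cite[Proposition 13.2]{bobkov-fisher-22}. Combined with the weak convergence $\mu_t^{\eps}\rightharpoonup\rho_t^0$ for each $t$ (which is all that is needed, and which you essentially already derive as a byproduct of your $\Was{2}$ argument) and Fatou, this closes the liminf inequality without any uniform entropy control. Replacing your step (ii) with this weaker but unconditional lsc, and dropping the $\Was{2}$ upgrade and the entropy-bound claim, repairs the proof and reproduces the paper's argument.
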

\begin{proof}
    Fix $\eps > 0$ and let $(\rho_t^{\eps},v_t^{\eps}, t\in [0,1])$ denote the $\eps$-entropic interpolation from $\mu$ to $\nu$. By the suboptimality of the McCann interpolation for the $\eps$-entropic Benamou Brenier,
    \begin{align}\label{eq:ebb-mcann-subopt}
        \frac{1}{2}\int_0^1 \norm{v_t^{\eps}}^2_{L^2(\rho_t^{\eps})}dt + \frac{\eps^2}{8}\int_0^1 I(\rho_t^{\eps}|\mathrm{Leb})dt &\leq \frac{1}{2}\Was{2}^{2}(\mu,\nu)+\frac{\eps^2}{8}\int_0^1 I(\rho_t^0|\mathrm{Leb})dt. 
    \end{align}
    Similarly, by the suboptimality of the $\eps$-entropic interpolation for the (unregularized) Benamou-Brenier \cite{benamou2000computational}, it holds that
    \begin{align}\label{eq:bb-eint-subopt}
        \frac{1}{2}\Was{2}^{2}(\mu,\nu) \leq \frac{1}{2}\int_0^1 \norm{v_t^{\eps}}^2_{L^2(\rho_t^{\eps})}dt.
    \end{align}
    As these inequalities hold for all $\eps > 0$,
    \begin{align*}
        \int_0^1 I(\rho_t^{\eps}|\mathrm{Leb})dt \leq \int_0^1 I(\rho_t^0|\mathrm{Leb})dt \Rightarrow \limsup\limits_{\eps \downarrow 0}\int_0^1 I(\rho_t^{\eps}|\mathrm{Leb})dt \leq \int_0^1 I(\rho_t^0|\mathrm{Leb})dt.
    \end{align*}
    On the other hand, $\rho_t^{\eps}$ converges weakly as $\eps \downarrow 0$ to $\rho_t^0$ for each $t \in [0,1]$. The lowersemicontinuity of $I(\cdot|\mathrm{Leb})$ with respect to weak convergence \cite[Proposition 13.2]{bobkov-fisher-22} and Fatou's lemma establishes the matching lower bound. Thus, the integrated Fisher information is continuous at $\eps = 0$. Next, rearrange \eqref{eq:ebb-mcann-subopt} to obtain
    \begin{align*}
        \limsup\limits_{\eps \downarrow 0}\frac{8}{\eps^2}\left(\frac{1}{2}\int_0^1 \norm{v_t^{\eps}}^2_{L^2(\rho_t^{\eps})}dt - \frac{1}{2}\Was{2}^{2}(\mu,\nu)\right) &\leq \limsup\limits_{\eps \downarrow 0} \left(\int_0^1 I(\rho_t^0|\mathrm{Leb})dt -\int_0^1 I(\rho_t^{\eps}|\mathrm{Leb})dt \right) = 0.
    \end{align*}
    Similarly, \eqref{eq:bb-eint-subopt} furnishes the matching lower bound. Altogether then,
    \begin{align}\label{eq:sb-ke-lim}
        \lim\limits_{\eps \downarrow 0} \frac{8}{\eps^2}\left(\frac{1}{2}\int_0^1 \norm{v_t^{\eps}}^2_{L^2(\rho_t^{\eps})}dt - \frac{1}{2}\Was{2}^{2}(\mu,\nu)\right)  &= 0. 
    \end{align}
    The claimed expansion then follows from \eqref{eq:ent-cost-ent-interp} and applying \eqref{eq:sb-ke-lim} and the continuity at $\eps = 0$ of the integrated Fisher information. 
\end{proof}

\end{document}